\newcommand\cB{\mathcal{B}}
\newcommand\cL{\mathcal{L}}
\newcommand\cP{\mathcal{P}}
\newcommand\cS{\mathcal{S}}
\newcommand\cO{\mathcal{O}}
\newcommand{\CC}{\mathbb{C}}
\newcommand{\TT}{\mathbb{T}}
\newcommand{\DD}{\mathbb{D}}
\newcommand{\NN}{\mathbb{N}}
\newcommand{\sz}{\mathrm{(S)}}
\newcommand{\bsl}{\backslash}
\newcommand{\ovl}{\overline}
\newcommand{\ti}{\tilde}
\newcommand{\dsp}{\displaystyle}
\numberwithin{equation}{section}
\newtheorem*{theorem1}{Theorem 1}
\newtheorem*{theorem2}{Theorem 2}
\newtheorem*{theorem3}{Theorem 3}
\newtheorem*{theorem1'}{Theorem 1'}
\newtheorem*{theorem1''}{Theorem 1''}
\newtheorem*{theorem2'}{Theorem 2'}
\newtheorem*{theorem3'}{Theorem 3'}
\newtheorem*{theoremm}{Theorem} 
\newtheorem{theorem}{Theorem}[section]
\newtheorem{proposition}[theorem]{Proposition}
\newtheorem{lemma}[theorem]{Lemma}
\newtheorem{corollary}[theorem]{Corollary}
\newtheorem{definition}[theorem]{Definition}
\newtheorem{remark}[theorem]{Remark}
\DeclareMathOperator\supp{supp}
\begin{document}

\title[Multipoint Schur approximation]{Multipoint Schur algorithm and orthogonal rational functions: convergence properties.}
\author[L. Baratchart, S. Kupin, V. Lunot, M. Olivi]{L. Baratchart, S. Kupin, V. Lunot, M. Olivi}

\address{2004 route des Lucioles - BP 93
FR-06902 Sophia Antipolis Cedex, France}
\email{Laurent.Baratchart@sophia.inria.fr}

\address{IMB, Universit\'e Bordeaux 1, 351 cours de la Lib\'eration, 33405 Talence Cedex France}
\email{skupin@math.u-bordeaux1.fr}

\address{2004 route des Lucioles - BP 93
FR-06902 Sophia Antipolis Cedex, France}
\email{vlunot@gmail.com}

\address{2004 route des Lucioles - BP 93
FR-06902 Sophia Antipolis Cedex, France}
\email{Martine.Olivi@sophia.inria.fr}

\date{January, 25, 2010}

\thanks{This work was partially supported by grants  ANR-07-BLAN-024701 and  ANR-09-BLAN-005801}

\keywords{Approximation by rational functions, Schur algorithm, Schur (Verblunsky) parameters, orthogonal rational functions (orthogonal polynomials),  Wall orthogonal functions (Wall polynomials)}
\subjclass{Primary:  30B70. Secondary: 41A20}

\begin{abstract}
Classical Schur analysis is intimately connected to the theory of orthogonal 
polynomials on the circle \cite{Simon1}. We investigate here the connection 
between multipoint Schur analysis and orthogonal rational functions.
Specifically, we study the convergence of the  
Wall rational functions {\it via} the development of a rational analogue 
to the Szeg\H o theory,
in the case where the interpolation points may accumulate on the unit circle.
This leads us to generalize results from \cite{Khrushchev2001, Bultheel1999},
and yields asymptotics of a novel type. 
\end{abstract}

\maketitle

\section*{Introduction}\label{intro}
The theory of orthogonal polynomials, with respect to a positive measure
on the line or the circle, currently undergoes a period of 
intensive growth. To hint at recent advances, let us quote 
the papers by  Killip-Simon \cite{ks1}, Mart\'inez-Finkelstein et al.
\cite{MFMLS}, Mi$\tilde{\mbox{n}}$a-D\`iaz \cite{Mina},
Kuijlaars et al. \cite{kui}, 
McLaughlin-Miller, \cite{McLM},
Lubinsky \cite{lu2} and Remling \cite{re1}. 
A comprehensive account of many late developments in the field can be found 
in the monograph by Simon \cite{Simon1}. Let us mention in passing that, 
over the same period, non-Hermitian orthogonality with respect to complex 
measures, which is intimately connected with rational 
approximation and interpolation, made some progress too; see, for example,
Aptekarev \cite{Apt02}, Aptekarev-Van Assche \cite{AVA04}, Baratchart-K\"ustner-Totik \cite{BKT} and  Baratchart-Yattselev \cite{BaYa3}.

The connection between orthogonal polynomials on the unit circle
and the Schur algorithm is an old one.
Recall that a Schur function is an analytic map from the 
open unit disk into itself. The Schur algorithm, introduced by Schur 
and Nevanlinna \cite{Schur,nev}, associates to every Schur function a 
sequence of complex numbers of modulus at most one, called its Schur (or Verblunsky)
parameters. Since the {\it mise en sc\`ene} of the present article unfolds mainly in the framework of the Schur analysis, we shall call the parameters ``Schur", although the term ``Verblunsky'' seems to be more fair historically, see
Simon \cite[Sect. 1.1]{Simon1} for a discussion. These parameters may be viewed as hyperbolic analogues of the Taylor coefficients at the origin. They generate a 
continued-fraction expansion of the function, 
whose truncations give rise to the 
so-called Schur approximants. These are hyperbolic counterparts of the Taylor 
polynomials, see definition \eqref{e02} to come. Now, an elementary linear 
fractional transformation puts Schur functions in one-to-one correspondence 
with Carath\'eodory functions, {\it i.e.} 
analytic functions with positive 
real part in the disk, which are themselves in
bijection with positive measures on the circle {\it via}
the Herglotz transform.
A long time ago already, Geronimus and Wall observed the remarkable identity
 between the Schur parameters of a function and the recurrence coefficients
of the orthogonal polynomials associated to the corresponding measure
\cite{Geronimus,wa1}.
However, only relatively recently was it stressed
by Khrushchev  \cite{Khrushchev2001, khr2}
how properties of the measure, that govern the convergence of the
corresponding orthogonal polynomials, 
are linked to the convergence of the
Schur approximants \emph{on} the unit circle.

It must be pointed out that the Schur algorithm is among the seldom 
procedures preserving the Schur character in rational approximation;
equivalently, it yields Carath\'eodory 
rational approximants to Carath\'eodory functions on the disk 
or the half-plane. This feature is of fundamental importance in 
several areas of Physics and Engineering, where 
the Schur or Carath\'eodory nature of a transfer function is to be 
interpreted as a  passivity property of the underlying system. 
Moreover, in  such modeling issues, the relevant norms 
take place on the boundary of the 
analyticity domain, that is, on the circle or the line, see
{\it e.g.} \cite{ma, FCG, AV, Bultheel1999}.  This is why
the results by Khrushchev are of significance from the applied viewpoint
as well, which was one incentive for the authors to undertake 
the present study. This motivation is illustrated in the 
doctoral work by V.~Lunot \cite{Lunot}.

Unless the Schur function to be approximated possesses some
symmetry, though, there 
is no particular reason why Schur approximants should distinguish the origin.
It is thus natural to turn to multipoint Schur approximants, 
that play the role of Lagrange interpolating polynomials
in the present hyperbolic context, 
see definitions \eqref{zetan} and \eqref{e02} to come.
The role of orthogonal polynomials is then played by orthogonal
\emph{rational} functions with poles at the reflections of the interpolation 
points across the unit circle. Orthogonal rational functions, pioneered by
Dzrbasjan \cite{Dzrbaj}, were later
studied by Pan  \cite{Pan} and considerably expanded by Bultheel et al. 
\cite{Bultheel1999}, see also Langer-Lasarow \cite{Langer}. 
The last two references stress the connection with the multipoint 
Schur algorithm, and  the comprehensive exposition in \cite{Bultheel1999},
which contains further references, presents an account of Szeg\H o asymptotics
when the interpolation points are compactly supported in the disk.
 
The present article is concerned with the so-called determinate case 
(see condition \eqref{e04}) when the
interpolation sequence may have limit points on the circle, and its purpose
is two-fold. On the one hand, we derive analogues of Khrushchev's results
\cite{Khrushchev2001} on the convergence of Schur approximants in the 
multipoint case, and on the other hand we present a counterpart of the Szeg\H o
theory for the associated orthogonal rational functions. 
We limit ourselves to regular measures on the circle, whose density does not
vanish at limit points of the interpolation sequence, and we do not touch upon
what is perhaps the most important issue, namely how to choose the 
interpolation points in an optimal fashion as regards convergence rates. 
Nonetheless, the present paper seems first to propose asymptotics 
when the interpolation points approach the unit circle.  

Anyone writing on the subject faces the difficulty of expounding
the maze of formulas on which research can dwell. 
Our choice has been to
give a terse summary of what we use, along with references.

\subsection{Definitions}\label{nopre}
Let $\DD$ be the open unit disk and $\TT$ the unit circle.
A function $f$ is called {\it Schur}  if it belongs to the unit ball
of the Hardy space $H^\infty(\DD)$, {\it i.e.} if $f\in H^\infty(\DD)$
and $ ||f||_\infty\le 1$. The collection of Schur functions
is  called the {\it Schur class}, indicated by $\cS$.

The multipoint Schur algorithm goes as follows. 
Let $(\alpha_k)$, for $ k\in\NN$, be a  {\it fixed} 
sequence of points in 
$\mathbb{D}$. We set $\alpha_0=0$ by convention.
Define the elementary factor $\zeta_k$ by
\begin{equation}\label{zetan}
\zeta_k(z)=\frac{z-\alpha_k}{1-\bar\alpha_k z},  
\end{equation}
and put for $f\in\cS, k\ge 0$,
\begin{equation}\label{e02}
\left\{
\begin{array}{l}
f_0 = f, \\
\gamma_{k}= f_{k}(\alpha_{k+1}), \\
f_{k+1} = \dfrac{1}{\zeta_{k+1}} \dfrac{ f_{k} - \gamma_{k}}{1-\bar \gamma_{k} f_{k}}.
\end{array}\right. 
\end{equation}
 
We call $f_n$ the {\it Schur remainder} of $f$ of order $n$. The 
{\it Schur convergent}, or {\it Schur approximant} to $f$ of order $n$,
is defined from \eqref{e02} by formally computing $f$ in terms of $f_{n+1}$ and
$\gamma_k$ for $0\leq k\leq n$, and then substituting $f_{n+1}=0$ in the
resulting expression. 

It is a straightforward consequence of the maximum principle,
that the algorithm stops at some
finite $n$ ({\it i.e.} that $f_n$ is an unimodular constant) 
if and only if $f$ is a Blaschke product of degree $n$, 
namely a rational function in $\cS$ which is unimodular on $\TT$:
$$
B(z)=c\,\prod_{j=1}^n \frac{z-\beta_j}{1-\bar\beta_j z},  
$$
where $\beta_j\in\DD,\ |c|=1$.
{\it Throughout the paper, we assume that this is not the case}, 
so that the Schur algorithm, when applied to $f$ with some sequence 
$(\alpha_k)$,
produces an infinite sequence  
$(f_k)$. By the maximum principle, it is easily seen 
that $f_k$ is in turn Schur.
The complex numbers $\gamma_k$ appearing in the 
algorithm
are called the {\it Schur ({\rm or} Verblunsky) parameters} of $f$, and our assumption that $f$ is
not a finite Blaschke product is equivalent to the fact
that $\gamma_k\in\DD$ for all $k$.

The case where $\alpha_k\equiv0$, originally considered by
Schur \cite{Schur} and subsequently studied by many authors,
will be referred to as the \emph{classical} Schur algorithm. {\em Thus,
in the classical case, $\alpha_k=0$ and $\zeta_k(z)=z$ for all $k$, as opposed to
the multipoint version above where $(\alpha_k)$ may distribute 
arbitrarily in $\DD$}.

It is clear from \eqref{e02} (this is formalized in 
Proposition \ref{gamma_values_f}) that $\gamma_k$ is completely
determined by the interpolation values $f^{(j)}(\alpha_l)$ with
$0\leq j\leq n_l-1$, where $n_l$ is the multiplicity of $\alpha_l$ in the 
sequence $(\alpha_\ell)_{1\leq \ell\leq k+1}$ and 
the superscript $(j)$ indicates the $j$-th derivative.
In order for the Schur approximants to actually converge to $f$, it is 
thus necessary that
the sequence $(\alpha_k)$ be a uniqueness set in $H^\infty(\DD)$. 
This is equivalent to the negation of Blaschke condition:
\begin{equation}\label{e04}
\sum_k (1-|\alpha_k|)=+\infty.
\end{equation}
Of importance to us will be the equivalence of (\ref{e04}) with the 
density of rational functions having poles at the points 
$(1/\overline\alpha_k)$ in every Hardy space $H^p(\DD)$, $1\leq p<\infty$, 
as well as in the disk algebra $A(\mathbb{D})$
\cite[App. A]{Akhiezer}. 
 
Next, we recall a basic construction relating the classical Schur algorithm
to orthogonal polynomials on 
$\TT$, see {\it e.g.} \cite{Khrushchev2001, Simon1}. 
For $\mu$ a Borel probability measure on $\TT$,  we let
$\mu_{ac}$ and $\mu_s$  respectively be its  absolutely continuous and 
singular components with respect to $m$,  
the Lebesgue measure given by 
$dm(t)=dt/(2\pi i t)=\frac 1{2\pi} d\theta$ where
$t=e^{i\theta}\in\TT$. We further put $\mu'=d\mu_{ac}/dm$ 
so that $d\mu=\mu' dm+d\mu_s$. 

To $f\in\cS$, we associate two probability measures $\mu,\tilde\mu$ 
on $\TT$ by the relations
\begin{equation}\label{e06}
F_\mu(z)=\frac{1+zf}{1-zf}=\int_\TT \frac{t+z}{t-z} d\mu(t),\quad
F_{\ti\mu}(z)=\frac{1-zf}{1+zf}=\int_\TT \frac{t+z}{t-z} d\tilde\mu(t).
\end{equation}
Clearly $F_\mu$ is a Carath\'eodory  function, 
{\it i.e. } $\mathrm{Re}\, F(z)>0,\ z\in\DD$; moreover $F(0)=1$.
We call $F_\mu$ the \emph{Herglotz transform} of $\mu$,
and the representation \eqref{e06} is possible because
every Carath\'eodory function is uniquely the Herglotz transform of a 
finite positive 
measure. 
From the Fatou theorems \cite[Ch. I, Sect. D]{Koosis}, we note that
\begin{equation}
\label{fFF}
\mu^\prime=\mathrm{Re}\, F_\mu=\frac{1-|f|^2}{|1-zf|^2},\qquad
\ \ 
\lim_{r\to1} \mathrm{Re}\, F_\mu(re^{i\theta})=+\infty,
\end{equation}
$m$-a.e. and $\mu_s$-a.e., respectively. Similar considerations hold for $\tilde\mu$.

Let $(\phi_n)$ and $(\psi_n)$ be the orthonormal 
polynomials with respect to $\mu$ and $\tilde\mu$:
\begin{equation}\label{e05}
\int_\TT \phi_n\ovl\phi_m d\mu=\delta_{nm},\qquad \int_\TT \psi_n\ovl\psi_m 
d\tilde\mu=\delta_{nm},
\end{equation} 
here $\delta_{nm}$ is the Kronecker symbol.  Our assumption that $f$ is 
not a finite Blaschke product means that $\mu$ and $\tilde\mu$ have infinite
support, therefore $\phi_n$, $\psi_n$ have exact degree $n$.  
The sequences $(\phi_n)$ and $(\psi_n)$ are called respectively
the orthonormal polynomials of first and second kind 
associated with $\mu$.
Clearly $\phi_n$ and $\psi_n$ are unique up to a multiplicative
unimodular constant. We normalize them so that  their respective 
leading coefficients  $k_n$ and $k_n^\prime$ are positive. 

For a polynomial $\phi$ of degree $n$, put
$\phi^*(z)=z^n \overline{\phi(1/\bar z)}$. This is again
a polynomial of degree $n$. Note that $k_n=\ovl{\phi_n^*(0)}$.
The coefficients 
\begin{equation}\label{e041}
\ti\gamma_n=\ti\gamma_n(\mu)=-\frac{\ovl{\phi_{n+1}(0)}}{k_{n+1}}
\end{equation}
are called {\it the Geronimus parameters} associated with $(\phi_n)$ 
(or with $\mu$). 
 
The following remarkable theorem, named after Geronimus, was proven almost 
simultaneously by Geronimus \cite{Geronimus} and Wall \cite{wa1}.
\begin{theoremm} Let $f\in \cS$. If $\alpha_k\equiv0$,
the Schur parameters and the Geronimus  parameters 
coincide, {\it i.e.} $\gamma_n=\ti\gamma_n,\ n\ge 0$.
\end{theoremm}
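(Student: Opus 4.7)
My plan is to prove Geronimus's theorem by constructing the classical Wall polynomials from the Schur algorithm and then identifying them (up to normalization) with the orthogonal polynomials of $\mu$.

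First, specializing $(\alpha_k)\equiv 0$, I introduce the pair of polynomial sequences $(A_n,B_n)$ defined by $A_0=0$, $B_0=1$, and the matrix recursion
\begin{equation*}
\begin{pmatrix} A_{n+1}(z)\\ B_{n+1}(z)\end{pmatrix}
=\begin{pmatrix} z & \gamma_n \\ \bar\gamma_n z & 1 \end{pmatrix}
\begin{pmatrix} A_{n}(z)\\ B_{n}(z)\end{pmatrix}.
\end{equation*}
An easy induction on $n$, running the Schur recursion \eqref{e02} with $\zeta_{k+1}(z)=z$, gives the representation
\begin{equation*}
f(z)=\frac{A_n(z)+z^{n}f_{n}(z)\,B_n^*(z)}{B_n(z)+z^{n}f_{n}(z)\,A_n^*(z)},
\end{equation*}
where $P^*(z)=z^{\deg P}\overline{P(1/\bar z)}$. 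The base case $n=0$ is the very definition $f=f_0$, and the inductive step is a straightforward algebraic manipulation using the definition of $f_{n+1}$. In particular, $\deg B_n=n$, and a quick look at the leading and constant terms of the recursion shows that $B_n$ has constant term $1$ and that its leading coefficient equals the leading coefficient of $A_n^*$.

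Next, I plug this expression for $f$ into $F_\mu=(1+zf)/(1-zf)$ and rearrange, obtaining
\begin{equation*}
F_\mu(z)\,\bigl(B_n(z)-z A_n(z)+z^{n}f_n(z)(A_n^*(z)-zB_n^*(z))\bigr)
=B_n(z)+z A_n(z)+z^{n}f_n(z)(A_n^*(z)+z B_n^*(z)).
\end{equation*}
Since $f_n\in\cS$ and the factor $z^n$ is present, the quantity $F_\mu(z)(B_n(z)-zA_n(z))-(B_n(z)+zA_n(z))$ vanishes to order at least $n$ at the origin. Reading off Taylor coefficients and using the power-series expansion $F_\mu(z)=1+2\sum_{k\ge1}\overline{\mu_k}\,z^k$ with $\mu_k=\int t^k d\mu(t)$, I get
\begin{equation*}
\int_\TT t^{-j}\bigl(B_n(t)-tA_n(t)\bigr)\,d\mu(t)=0,\qquad 0\le j\le n-1,
\end{equation*}
or equivalently, after the substitution $t\mapsto1/\bar t$, that the reciprocal polynomial $(B_n-zA_n)^*$ is $\mu$-orthogonal to $1,z,\dots,z^{n-1}$. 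A companion identity with $F_{\tilde\mu}$ (or a symmetry argument on $A_n$, $B_n$) shows that $\deg(B_n-zA_n)^*=n$ with a nonzero top coefficient, so $(B_n-zA_n)^*$ must be a positive scalar multiple of $\phi_n^*$.

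Finally, from the recursion one computes that the coefficient of $z^{n+1}$ in $B_{n+1}-zA_{n+1}$ equals $\bar\gamma_n$ times the constant term of $B_n-zA_n$, while the constant term of $B_{n+1}-zA_{n+1}$ equals that of $B_n-zA_n$, namely $1$. Hence, writing $(B_{n+1}-zA_{n+1})^*=c_{n+1}\phi_{n+1}^*$ and evaluating at $z=0$ yields $\bar\gamma_n=c_{n+1}\overline{\phi_{n+1}(0)}$, while matching leading coefficients gives $c_{n+1}=1/k_{n+1}$. Combining the two, $\gamma_n=-\overline{\phi_{n+1}(0)}/k_{n+1}=\tilde\gamma_n$.

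The main obstacle is the orthogonality step: one must verify that the polynomial $(B_n-zA_n)^*$ produced by the Schur algorithm is genuinely $\mu$-orthogonal of exact degree $n$ and carries the normalization that makes the identification with $\phi_n^*$ direct, rather than off by a $z$-factor or an extraneous Blaschke-type scalar. All other steps—the matrix recursion, the formula for $f$ in terms of $(A_n,B_n,f_n)$, and the bookkeeping of constant and leading coefficients—are essentially mechanical.
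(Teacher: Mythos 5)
Your strategy (identify $B_n - zA_n$ with a multiple of an orthogonal polynomial by extracting $\mu$-orthogonality from the interpolation of $F_\mu$) is a genuinely different route from the paper's, which proves the multipoint generalization by directly comparing the transfer-matrix recursions for the Wall rational functions and for $(\phi_n,\psi_n)$. The idea can be made to work, but your execution contains a concrete error in the very first step that propagates through the rest.

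The matrix recursion and the representation formula built from it are incorrect. With $A_0=0,\ B_0=1$ and your recursion $A_{n+1}=zA_n+\gamma_nB_n$, $B_{n+1}=\bar\gamma_nzA_n+B_n$, one computes $A_1=\gamma_0$, $B_1=1$, $A_2=\gamma_0 z+\gamma_1$, $B_2=1+\gamma_0\bar\gamma_1 z$; in general $A_n(0)=\gamma_{n-1}$ and $B_n(0)=1$. Plugging $z=0$ into your claimed identity $f=(A_n+z^nf_nB_n^*)/(B_n+z^nf_nA_n^*)$ would then give $f(0)=A_n(0)/B_n(0)=\gamma_{n-1}$, whereas $f(0)=\gamma_0$; so the formula fails for every $n\ge2$. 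The underlying cause is an ordering mistake: the matrix $M_k=\left(\begin{smallmatrix}z&\gamma_k\\\bar\gamma_k z&1\end{smallmatrix}\right)$ encodes the M\"obius map $w\mapsto(\gamma_k+zw)/(1+\bar\gamma_kzw)=\tau_k(w)$, and your recursion multiplies $M_n M_{n-1}\cdots M_0$ onto the initial vector, which corresponds to $\tau_{n-1}\circ\cdots\circ\tau_0(0)$, not the Schur convergent $\tau_0\circ\cdots\circ\tau_{n}(0)$ from \eqref{Rn}. The correct classical Wall recursion is $A_{n+1}=A_n+\gamma_{n+1}zB_n^*$, $B_{n+1}=B_n+\gamma_{n+1}zA_n^*$ with $A_0=\gamma_0$, $B_0=1$ (the $*$ always taken with respect to the degree index $n$, not the actual degree), which couples $(A_n,B_n)$ to $(A_n^*,B_n^*)$ and so is naturally a $2\times2$-matrix identity as in \eqref{recuPQ}, not a two-vector recursion. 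The correct linearization of \eqref{NP} in the classical case is $f=(A_n+zf_{n+1}B_n^*)/(B_n+zf_{n+1}A_n^*)$, with a single factor $z$, not $z^n$. Consequently the vanishing order you need at the origin is obtained from the interpolation $f-A_n/B_n=O(z^{n+1})$ together with the extra factor of $z$ in the M\"obius map $g\mapsto(1+zg)/(1-zg)$, not from a $z^n$ in front of the tail.

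Two secondary points. Your internal bookkeeping claim that $\deg B_n=n$ and that the leading coefficient of $B_n$ equals that of $A_n^*$ does not hold for the sequences your recursion actually produces (e.g.\ $\deg B_2=1$ and the two leading coefficients are $\gamma_0\bar\gamma_1$ and $\bar\gamma_1$). And the final arithmetic does not close as written: from $\bar\gamma_n=c_{n+1}\overline{\phi_{n+1}(0)}$ and $c_{n+1}=1/k_{n+1}$ one gets $\gamma_n=\phi_{n+1}(0)/k_{n+1}$, not $-\overline{\phi_{n+1}(0)}/k_{n+1}$; a sign and a conjugation are missing, which indicates the identification $(B_n-zA_n)^*=c_{n+1}\phi_{n+1}^*$ is not being tracked with the correct normalization. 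Once you replace the recursion and representation by the correct ones, the orthogonality step becomes the statement that $F_\mu(B_n-zA_n)-(B_n+zA_n)=O(z^{n+2})$, and the identification is with $\phi_{n+1}^*$ (proportional to $B_n-zA_n$, no further $*$), as made explicit in \eqref{relation Psi_n A_n}; at that point you will recover the correct sign and conjugation automatically.
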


Since trading $\mu$ for $\ti\mu$ is tantamount to change $f$ into $-f$, 
a corollary is that
$\ti\gamma(\mu)=-\ti\gamma(\ti\mu)$.

We turn to the multipoint version of Geronimus' theorem,
which is due essentially to Bultheel et al. \cite{Bultheel1999} 
although the first
explicit statement is apparently in Langer-Lasarow \cite{Langer}. 
For this, orthogonal polynomials need to be generalized into
orthogonal rational functions whose construction we now explain.
Define  the ``partial'' Blaschke products $\mathcal{B}_k$ by
\begin{equation}\label{Bn}
\mathcal{B}_0(z)  =  1,\quad  \mathcal{B}_k(z)  =  
\mathcal{B}_{k-1}(z)\zeta_k(z),  
\end{equation}
where $\zeta_k$ is given by \eqref{zetan} and $k\geq 1$.
The functions $\{\mathcal{B}_0,\mathcal{B}_1,\ldots,\mathcal{B}_n\}$ span the space
\begin{equation}
        \cL_n=\left\{\frac{p_n}{\pi_n}~:~\pi_n(z)=\prod_{k=1}^n(1-\bar\alpha_k z),~~p_n\in\cP_n\right\},
\label{Ln}
\end{equation}
where $\cP_n$ stands for the space of algebraic polynomials of degree 
at most $n$. In the classical case, that is when $\alpha_k=0$ for all $k$,
$\cL_n$ coincides with $\cP_n$.

Given a function $g$, we introduce the parahermitian conjugate $g_*$ defined by $g_*(z)=\overline {g(1/\bar z)}$. Observe that $|g_*|=|g|$ on $\TT$ and that
${\zeta_n}_{*} = {\zeta_n}^{-1}$, ${\mathcal{B}_k}_* = {\mathcal{B}_k}^{-1}$.
For $g\in \cL_n$, we set $g^*=\mathcal{B}_nf_*$; clearly, $g^*\in\cL_n$. 
There is no 
notational discrepancy since in the classical case the star operation 
agrees with the definition we gave before.
Put $\mathcal{B}_{n,i}=\prod_{k=i}^{n}\zeta_k$. 
Each $g\in\cL_n$ can be uniquely decomposed in the form
$$
g = a_n \mathcal{B}_n + a_{n-1} \mathcal{B}_{n-1} + \dots + a_1 \mathcal{B}_1 + a_0,
$$
and then
$$
g^* = \bar a_0 \mathcal{B}_{n,1} + \bar a_{1} \mathcal{B}_{n,2} + \dots + \bar a_{n-2} \mathcal{B}_{n,n-1} + \bar a_{n-1} \mathcal{B}_{n,n}  + \bar a_n. 
$$
It is plain that 
$a_n=\overline{g^*(\alpha_n)}$ and $a_0=g(\alpha_1)$.

Now, pick  a Schur function $f$ which is not a Blaschke product,
denote its Herglotz measure by $\mu$ 
\eqref{e06}, and consider $\cL_n$ as a subspace of $L^2(\mu)$. This is 
possible  since $\mu$ has infinite support.
Let $(\phi_k)_{0\le k\le n}$ be an orthonormal basis for $\cL_n$ such that $\phi_0=1$ and $\phi_k\in \cL_k\setminus\cL_{k-1}$. Such a basis is easily 
obtained on applying the Gram-Schmidt orthonormalization process to 
$\mathcal{B}_0, \mathcal{B}_1, \dots, \mathcal{B}_n$. 
We customary write 
\begin{equation}
        \label{phin}
        \phi_n=\kappa_n \mathcal{B}_n+a_{n,n-1}\mathcal{B}_{n-1}+\ldots+a_{n,1}\mathcal{B}_1+a_{n,0}\mathcal{B}_0,  
\end{equation}
where $\kappa_n = \overline{\phi_n^*(\alpha_n)}$.
\begin{definition}\label{d01} The functions $(\phi_k)$ are called 
the orthogonal rational functions of the first kind 
associated to $(\alpha_k)$ and  $\mu$.
\end{definition}
The $(\psi_n)$ arising from embedding $\cL_n$ to $L^2(\ti\mu)$
are called the orthogonal rational functions of the second kind.
Clearly, the orthogonal rational functions $(\phi_n), (\psi_n)$
defined in \eqref{phin} reduce to the orthonormal polynomials 
from \eqref{e05} in the classical case.

Generically, the dependence on the nodes $(\alpha_k)$ and the measure 
$\mu$ will be omitted. The words ``orthogonal rational function'' will be 
abbreviated as ORF or OR-function.

The definition of the {\it Geronimus parameters} $(\ti\gamma_k)$ for
OR-functions is 
\[\tilde{\gamma}_n=-\frac{\overline{\phi_{n}(\alpha_{n-1})}}{\overline{\phi_{n}^*(\alpha_{n-1})}}, \quad n\geq1.\]
Note we do not define $\ti\gamma_0$ and there is a shift of index
as compared to \eqref{e041}.

It is quite nontrivial that one can relate the Schur algorithm 
\eqref{e02} and the ORFs \eqref{phin} in the multipoint case as well: 
\begin{theoremm}[\cite{Bultheel1999, Langer}] Let $(\alpha_k),\ f\in\cS,$ and  the ORFs $(\phi_n)$ be as above. Then the multipoint Schur and Geronimus parameters coincide, {\it i.e. } $\gamma_k=\ti\gamma_{k+1}$.
\end{theoremm}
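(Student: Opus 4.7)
The plan is to proceed by induction on $k$. For the base case $k=0$ I would compute both sides explicitly. On the Schur side, $\gamma_0 = f(\alpha_1)$ by definition. On the Geronimus side, $\tilde\gamma_1=-\overline{\phi_1(0)}/\overline{\phi_1^*(0)}$ (recall $\alpha_0=0$), and Gram--Schmidt gives $\phi_1 = \kappa_1\zeta_1 + a_{1,0}$ with the orthogonality relation $\int_\TT \phi_1\,d\mu = 0$, forcing $a_{1,0}=-\kappa_1\int_\TT\zeta_1\,d\mu$. Using the computation in the excerpt, $\phi_1^* = \bar a_{1,0}\zeta_1+\bar\kappa_1$. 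It remains to compute $\int_\TT\zeta_1\,d\mu$ from $F_\mu(z)=(1+zf(z))/(1-zf(z))$; writing $\zeta_1(t) = (t-\alpha_1)/(1-\bar\alpha_1 t)$ and using that $(t+\bar\alpha_1^{-1})/(t-\bar\alpha_1^{-1})$ is, up to a real affine change, the reproducing kernel of the Herglotz integrand evaluated at $z=1/\bar\alpha_1$, one extracts $\int_\TT\zeta_1\,d\mu$ as an explicit Möbius transform of $f(\alpha_1)$. Substituting yields $\tilde\gamma_1 = f(\alpha_1) = \gamma_0$.

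For the inductive step, the strategy is a shift lemma: if $f_1$ is the first Schur remainder of $f$ and $\mu^{(1)}$ its Herglotz measure, and if $(\phi_n^{(1)})$ denotes the ORFs attached to $\mu^{(1)}$ with the \emph{shifted} node sequence $(\alpha_{k+1})_{k\geq 0}$, then
\[
\tilde\gamma_n(\mu^{(1)}) \;=\; \tilde\gamma_{n+1}(\mu), \qquad n\geq 1.
\]
Once this is established, applying the base case to $f_1$ (whose zeroth Schur parameter, with respect to the shifted node sequence, is $\gamma_1(f)$) gives $\gamma_1 = \tilde\gamma_1(\mu^{(1)}) = \tilde\gamma_2(\mu)$, and iterating proves $\gamma_k=\tilde\gamma_{k+1}(\mu)$ for all $k$.

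The main obstacle is the shift lemma. The route I would take is to invert the Schur recursion, $f = (\zeta_1 f_1+\gamma_0)/(1+\bar\gamma_0\zeta_1 f_1)$, and plug into $F_\mu=(1+zf)/(1-zf)$; after clearing denominators one obtains a Möbius relation between $F_\mu$ and $F_{\mu^{(1)}}$ whose real part yields, via \eqref{fFF}, the density transformation
\[
d\mu(t) \;=\; \frac{1-|\gamma_0|^2}{|1-\bar\gamma_0\, t\,\zeta_1(t)|^2}\,d\mu^{(1)}(t)
\]
on the absolutely continuous part, with an analogous transfer on the singular part. Because the unimodular factor $\zeta_1$ only reshuffles the Blaschke basis $(\mathcal{B}_k)$, this weight is precisely what is needed to translate orthogonality in $L^2(\mu)$ with basis $\mathcal{B}_0,\mathcal{B}_1,\dots$ into orthogonality in $L^2(\mu^{(1)})$ with basis $\mathcal{B}_0^{(1)},\mathcal{B}_1^{(1)},\dots$, where $\mathcal{B}_k^{(1)}=\mathcal{B}_{k+1}/\mathcal{B}_1$. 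A direct matching of Gram matrices (or, equivalently, a one-line application of a multipoint Szeg\H o-type recursion for ORFs, which writes $\phi_{n+1}$ and $\phi_{n+1}^*$ as an explicit linear combination of $\phi_n^{(1)}$ and $(\phi_n^{(1)})^*$) then expresses $\phi_{n+1}(\alpha_n)$ and $\phi_{n+1}^*(\alpha_n)$ in terms of $\phi_n^{(1)}(\alpha_n)$ and $(\phi_n^{(1)})^*(\alpha_n)$.

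Evaluating at the appropriate node $\alpha_n$, invoking $(\zeta_k)_*=\zeta_k^{-1}$, and reading off $\tilde\gamma_{n+1}(\mu)=-\overline{\phi_{n+1}(\alpha_n)}/\overline{\phi_{n+1}^*(\alpha_n)}$ versus $\tilde\gamma_n(\mu^{(1)}) = -\overline{\phi_n^{(1)}(\alpha_n)}/\overline{(\phi_n^{(1)})^*(\alpha_n)}$ gives the shift identity after a short algebraic rearrangement. The technical heart of the argument is thus the weight transformation $d\mu\mapsto d\mu^{(1)}$ under one Schur step and its compatibility with the Blaschke basis shift; once this is written down cleanly, everything else is bookkeeping.
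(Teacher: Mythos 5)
Your plan — induction on $k$ with a ``shift lemma'' reducing $\tilde\gamma_{n+1}(\mu)$ to $\tilde\gamma_n(\mu^{(1)})$ for the stripped remainder — is a genuinely different route from the paper. The paper instead matches the full $n$-step transfer-matrix recursion for $(\phi_n,\psi_n)$ (Theorem \ref{recurrence_phin_psin}) against that for the Wall RFs (Proposition \ref{recurrence_An_Bn}), then uses the interpolation property from Proposition \ref{F_psi_phi} and uniqueness of Schur parameters; no induction and no stripped measure are needed. Your base case is essentially correct, though you should observe that the identity $\tilde\gamma_1=\gamma_0$ (rather than $\tilde\gamma_1 = c\gamma_0$ for a unimodular $c$) hinges on the normalization $\lambda_1=1$ in \eqref{normalisation phin}; this is not automatic from Gram--Schmidt alone.

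The real gap is the density transformation. Your formula
\[
d\mu(t)=\frac{1-|\gamma_0|^2}{|1-\bar\gamma_0\,t\,\zeta_1(t)|^2}\,d\mu^{(1)}(t)
\]
is incorrect, and the error is not cosmetic. Writing $u=\zeta_1 f_1$ and inserting $f=(u+\gamma_0)/(1+\bar\gamma_0 u)$ into $\mu'=(1-|f|^2)/|1-tf|^2$ gives, on $\TT$,
\[
\mu'=\frac{(1-|\gamma_0|^2)(1-|f_1|^2)}{\bigl|(1-t\gamma_0)-t\,\overline{(1-t\gamma_0)}\,\zeta_1 f_1\bigr|^2},
\qquad
\bigl(\mu^{(1)}\bigr)'=\frac{1-|f_1|^2}{|1-tf_1|^2},
\]
so the ratio $\mu'/(\mu^{(1)})'$ depends on $f_1(t)$ and is \emph{not} a fixed weight in $t$. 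Even when $\gamma_0=0$ one finds $\mu'/(\mu^{(1)})'=|1-tf_1|^2/|1-t\zeta_1 f_1|^2\ne 1$ unless $\alpha_1=0$, so your formula fails already in this degenerate case. This reflects the fact that, since $F_\mu$ and $F_{\mu^{(1)}}$ are linked by a $z$-dependent M\"obius map of the right half-plane, the two harmonic densities $\operatorname{Re}F_\mu$ and $\operatorname{Re}F_{\mu^{(1)}}$ are related through the Jacobian evaluated \emph{at} $F_{\mu^{(1)}}(t)$, not through a weight in $t$ alone. Consequently the ``matching of Gram matrices'' you envisage does not reduce to a change of weight, and ``an analogous transfer on the singular part'' is a much deeper issue than this phrasing suggests (the singular parts of $\mu$ and $\mu^{(1)}$ are not related by any multiplication). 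There is also a secondary problem: the paper's construction, via \eqref{e06}, normalizes $F_\mu(0)=1$ and sets $\alpha_0=0$, so replacing the node sequence by $(\alpha_{k+1})_{k\ge0}$ with $\alpha_0^{(1)}=\alpha_1$ breaks the Herglotz normalization at $0$; the correct shift keeps $\alpha_0=0$ and replaces $(\alpha_1,\alpha_2,\dots)$ by $(\alpha_2,\alpha_3,\dots)$. Finally, the alternative you offer — ``a one-line application of a multipoint Szeg\H{o}-type recursion for ORFs, which writes $\phi_{n+1}$ as an explicit linear combination of $\phi_n^{(1)}$ and $(\phi_n^{(1)})^*$'' — is not available independently: establishing such a relation between $\phi_{n+1}[\mu]$ and $\phi_n[\mu^{(1)}]$ is essentially equivalent to the theorem itself, so invoking it here would be circular. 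Unless you can supply a correct, $f_1$-free substitute for the weight transformation, the inductive step does not go through; the paper's global comparison of the two matrix recursions is a cleaner way around this obstruction.
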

We prove this fundamental result in 
Section \ref{s3} 
for the sake of completeness.

\subsection{Discussion of the main results}
\label{mainresults}
The convergence properties of the Schur approximants and of the
ORFs $(\phi_n)$ are the main address of the present work, which is in part inspired by the results obtained 
by Khrushchev \cite{Khrushchev2001}.
To better see the parallel between the classical and the multipoint case, 
we give below a sample of results  from \cite{Khrushchev2001}
in the classical situation, and have them followed by their multipoint
counterparts,
numbered with a prime superscript; we connect these counterparts to the 
forthcoming results
in between parentheses.

 We say that a measure $\mu$ is Erd\H os, iff $\mu'>0$ a.e. on $\TT$. 
This is equivalent to say that $|f|<1$ a.e. on $\TT$.
\begin{theorem1}[{\cite{Khrushchev2001}, Theorem 1}]\label{tt1} Let $f\in\cS$  and $\mu$ be its Herglotz measure. If $\alpha_k\equiv0$, then
$\mu$ is Erd\H os if and only if the Schur remainders $f_n$ satisfy
$$
\lim_n \int_\TT |f_n|^2 dm=0.
$$
\end{theorem1}

The next result is stated in terms of 
the classical Wall polynomials $A_n$, $B_n$ of $f$ 
\cite[Sect. 4]{Khrushchev2001}, obtained from 
Definition \ref{d1} below by setting $\alpha_k\equiv 0$. By
definition of the Wall polynomials, the ratio $A_n/B_n$ is the {\it Schur approximant} to $f$ of 
degree $n$. 
Recall that the pseudohyperbolic distance on $\DD$ is defined as $\rho(z,w)=|z-w|/|1-\bar w z|,\ z,w\in \DD$.   
 
\begin{theorem2}[{\cite{Khrushchev2001}, Corollary 2.4}]\label{tt2}
A measure $\mu$ is Erd\H os if and only if
$$
\lim_n \int_\TT\rho\left(f,\frac{A_n}{B_n}\right)^2 dm=0.
$$
\end{theorem2}

We shall see that, in the multipoint situation  when the sequence 
$(\alpha_k)$ accumulates on the unit circle, 
the conclusions of Theorems 1 and 2 get 
localized around the accumulation points of $(\alpha_k)$
on $\TT$ so that $L^2$-norms
get weighted by the Poisson kernel at $\alpha_{n+1}$. 
This is why, somewhat reminiscently of the Fatou theorem,
we put extra-conditions on $\mu$, locally
around such points, to derive convergence properties.
Namely, let $Acc(\alpha_k)=\ovl{(\alpha_k)}\bsl (\alpha_k)$ be
the set of accumulation points of $(\alpha_k)$; the bar (or $clos\, (.) $) 
stands for the closure of a set.
The following assumptions play an important role in our proofs
\begin{eqnarray}\label{e08}
&&\mu'\in {\mathcal C}(\cO(Acc(\alpha_k)\cap\TT)),\\
&& \mu'>0\ \ \mathrm{on}\ \ \cO(Acc(\alpha_k)\cap\TT), \label{e082}\\
&&  \{Acc(\alpha_k)\cap\TT\}\subset \TT\bsl \supp\mu_s, \label{e083}
\end{eqnarray}
where, for $A\subset\TT$, $\cO(A)$ designates an open neighborhood of $A$ 
in $\TT$
and $\mathcal{C}(A)$ is the space of continuous functions on  $A$.  
The {\it closed} support of $\mu_s$ is denoted by $\supp\mu_s$. 
When the sequence $(\alpha_k)$ accumulates nontangentially on
$Acc(\alpha_k)\cap\TT$, meaning that every convergent subsequence to
$\xi\in\TT$  tends to the latter
nontangentially, two weaker  substitutes for \eqref{e08}, \eqref{e082} are
also of interest:
\begin{eqnarray}
&&\mathrm{ each}\ \xi\in Acc(\alpha_k)\cap\TT\ \mathrm{is\ a\  Lebesgue \ point \ of\ } 
\mu',\ \sqrt{\mu'}, \label{e08p}\\ 
&& \mathrm{and\ }\mu'(\xi)>0; \nonumber\\
&& \mu'\mathrm{\ is\  upper\ semicontinuous,\ }0<\delta<\mu'<M<\infty \mathrm{\ on} \label{e082p}\\
&& \mathcal{O}(Acc(\alpha_k)\cap\TT),  \mathrm{\ \ and\ \ each \ }\ \xi\in Acc(\alpha_k)\cap\TT
\mathrm{\ \ is\ a} \nonumber\\
&& \mathrm{\  Lebesgue \ point \ of}\ \log \mu'.\nonumber
\end{eqnarray}
From \eqref{fFF}, we see that \eqref{e08} and  \eqref{e082}
may be ascertained in terms of $f$,  namely 
$f\in {\mathcal C}({\mathcal O}(Acc\,(\alpha_k)\cap\TT))$ and $|f|<1$ 
there, while $(Acc (\alpha_k)\cap\TT)\subset \TT\bsl clos\, \{z:\,zf(z)=1\}$.  

The multipoint analogues of the previous theorems go as follows.
\begin{theorem1'}[{Corollary \ref{c03} and Theorem \ref{convpAB}}]
Let  \eqref{e04}, \eqref{e08}-\eqref{e083} hold, and  $|f| < 1$ a.e. on $\TT$. Then
\[
\lim_{k} \int{|f_{k}|^2 P(.,\alpha_k) dm}=0.
\]
If $(\alpha_k)$ accumulates nontangentially on
$Acc(\alpha_k)\cap\TT$, then one can replace hypotheses
\eqref{e08} and \eqref{e082} with \eqref{e08p}.
\end{theorem1'}

Above, $P(.,\alpha_k)$ is the Poisson kernel at $\alpha_k$
on $\DD$ \eqref{e09}.
Denote by $W^{q,p}(\TT)$ the Sobolev spaces on $\TT$ (see Section \ref{ss03} for more details). Recall that  $(A_n), (B_n)$  are the Wall rational functions from Definition \ref{d1} corresponding to $f\in \cS$.

\begin{theorem1''}
Let $\mu$ be absolutely continuous 
with $\mu'\in W^{1-1/p,p}(\TT)$ for some $p>4$, and 
$\mu'>0$ on some neighborhood $\cO(Acc(\alpha_k)\cap\TT)$. 
Then 
\[
\lim_n\left\|\left(f-\frac{A_n}{B_n}\right) \sqrt{P(.,\alpha_{n+1})}\right\|_\infty=0.
\]
\end{theorem1''}

Remarks on the converse to Theorem 1' follow Theorems \ref{t001}, \ref{t002}.
 
\begin{theorem2'}[{Theorem \ref{t01}}]
Assumptions being as in Theorem 1', we have
\[
\lim_{k} \int_\TT \rho\left(f,\frac{A_k}{B_k}\right)^2 P(.,\alpha_{k+1}) dm=0.
\]
\end{theorem2'}
The final point of the paper is to carry over the
Szeg\H o  theory to the multipoint setting. Recall that
a measure $\mu$ is called {\it Szeg\H o}  (notation: $\mu\in\sz$) iff 
$\log\mu'\in L^1(\TT)$.
For $\mu\in\sz$, the associated Szeg\H{o} function $S$ is   
\begin{equation}\label{e081}
S(z) = S[\mu](z):=\exp \left(\frac{1}{2} \int_{\mathbb{T}} \frac{t+z}{t-z} 
\log\mu' dm(t) \right).
\end{equation}
The function $S$ given by  \eqref{e081} is the so-called {\it outer} function in 
$H^2(\DD)$ such that
$|S|^2=\mu'$ a.e. on $\TT$, normalized so that $S(0)>0$.  

The first version of the next theorem, which addresses the classical
case, was proven by Szeg\H o \cite{Szego}. Subsequent improvement were obtained by Geronimus \cite{Geronimus1}, Krein \cite{kr1}, and others; 
see Simon \cite{Simon1} for the discussion and a full list of references.  
Some of the latest improvements are due to Nikishin-Sorokin \cite{Nikishin}, Peherstorfer-Yuditskii \cite{yu1}. A generalized version of Szeg\H o condition is treated in Denisov-Kupin \cite{dk1, dk2}.
\begin{theorem3} Let $\mu\in\sz$ and $(\phi_n)$ be the
corresponding orthonormal polynomials \eqref{e05}. Then
\begin{itemize}
\item $\lim_n (S\phi_n^*)(0)= 1$; more generally,  $\lim_n (S\phi_n^*)(z)= 1$ for $z\in\DD$. 
\item  
$\dsp \lim_n \int_\TT |S\phi^*_n-1|^2 dm=0.$
\end{itemize} 
Moreover, $\mu\in\sz$  if and only if
$$
\lim_n \int_\TT\mathfrak{P}\left(f,\frac{A_n}{B_n}\right)^2 dm=0,
$$
where $\mathfrak{P}(.,.)$  is the hyperbolic distance on $\DD$
\eqref{e07}. Equivalently,
$$
\lim_n \int_\TT \log(1-|f_n|^2)\, dm=0.
$$
\end{theorem3}
The last assertion of the theorem concerning the hyperbolic distance
is from Khrushchev \cite{Khrushchev2001}, Theorem 2.6.

A multipoint analogue to the previous theorem
when $(\alpha_n)$ is compactly supported in $\DD$
is Theorem 9.6.9 from Bultheel et al. \cite{Bultheel1999}; its generalization to sequences $(\alpha_k)$
meeting \eqref{e04} is given below.
It is more difficult
and requires some preparation. 
It relies on {\it a priori} pointwise estimates of $(\phi_n)$ 
(see Proposition \ref{bornephin}), that play here the role
of classical bounds by
Szeg\H o and Geronimus  \cite[Ch. 12]{Szego}, 
\cite[Ch. 4]{Geronimus1}. Such estimates are new even in the polynomial case,
as they handle some situations where $\mu'$ may vanish. 
Their proof in turn depends 
on $\ovl\partial$-estimates and Sobolev embeddings.
As compared to the case where $(\alpha_n)$ is compactly supported in $\DD$, 
the result below is of new type in that $S\phi_n^*$ is 
asymptotic to a normalized Cauchy kernel at the last interpolation point,
which is unbounded
when $(\alpha_n)$ approaches $\TT$. 
Here is a combination of Theorem \ref{cvg hyperbolic}, 
Theorem \ref{c04} and Corollary \ref{c01} to come:
\begin{theorem3'}\label{t3'}
Let \eqref{e04}, \eqref{e08}-\eqref{e083}  be in force,
with  $\mu\in\sz$. 
Then
\begin{itemize}
\item $\lim_{n} |\phi_n^*(\alpha_n)|^2 |S(\alpha_n)|^2 (1-|\alpha_n|^2) = 1$;
more generally, for any sequence $(z_n)\subset \DD$, it holds
\[
\lim_{n}\left\{ \phi_n^*(z_n)S(z_n) \sqrt{1-|z_n|^2} 
-\beta_n \frac{\sqrt{1-|\alpha_n|^2}\sqrt{1-|z_n|^2}}{1-\overline{\alpha}_nz_n}
\right\}=0,
\]
where $\beta_n=(S\phi^*_n)(\alpha_n)/|(S\phi^*_n)(\alpha_n)|$.
In particular, for a fixed $z\in\DD$,
$$
\lim_n \left\{S\phi^*_n(z) -\beta_n\frac{\sqrt{1-|\alpha_n|^2}}{1-\overline\alpha_n z}\right\}=0.
$$

\item We also have
\[
\lim_n \left\| S\phi^*_n(z) -\beta_n\frac{\sqrt{1-|\alpha_n|^2}}{1-\overline\alpha_n z}\right\|_2=0.
\]
\item Moreover
$$
\lim_{n} \int_\TT \mathfrak{P}\left(f,\frac{A_n}{B_n}\right)^2 P(.,\alpha_{n+1}) dm=0,
$$
in particular
$$
\lim_n \int_\TT\log(1-|f_n|^2) P(.,\alpha_{n}) dm=0.
$$
\end{itemize}
If $(\alpha_k)$ accumulates nontangentially on
$Acc(\alpha_k)\cap\TT$, then one can replace
\eqref{e08} and \eqref{e082} by \eqref{e082p}.
\end{theorem3'}
It would be interesting to know how much these assumptions can be 
relaxed. In particular, we shall give an example where the conclusion of
Theorem 3' holds although \eqref{e082} fails.

The paper is organized as follows.  The multipoint Schur algorithm, 
its connections to continued fractions, the Wall rational functions and 
the Schur parameters are discussed in Section \ref{s2}. 
Section \ref{s3} introduces the ORFs $(\phi_n), (\psi_n),$ and expresses them through Geronimus parameters and transfer matrices. The construction is then used to prove Geronimus' theorem and its corollaries.  Although we use a different normalization, by and large, 
the content of Section \ref{s3} is borrowed from Bultheel 
et al. \cite{Bultheel1999}.  The convergence of Schur remainders and Wall RFs is studied in Sections 
\ref{s5} and \ref{s6}. Section \ref{s7} is devoted to the discussion of 
the Szeg\H o-type theorem and its corollaries. 

\subsection{Some notation}\label{ss03}
As already mentioned,  the closure of $A\subset \CC$ is indicated by $clos\, A$ or $\ovl A$,
while $\cO(A)$ designates an open neighborhood of $A$ in $\TT$.
The normalized Lebesgue measure on $\TT$ is denoted by 
$m$, and the measure of $A\subset\TT$ is denoted by 
$|A|$. We put ${\mathcal C}(A)$ for the space of continuous functions on 
$A$. The symbol 
$||.||_p$ stands for the usual norm on the Lebesgue space
$L^p(\TT),\ 1\le p\le\infty$; 
\emph{when $p=2$, the subindex is usually dropped}.
The classical analytic Hardy spaces of the disk are denoted by 
$H^p(\DD), 1\le p\le\infty$, and 
$A(\DD)$ is the disk algebra,  comprised of analytic
functions in $\DD$ 
that extend continuously to $\overline{\DD}$, endowed with the $sup$ norm.
Standard references on the subject are the books by Duren \cite{Duren}, 
Garnett \cite{Garnett}, Koosis \cite{Koosis}, from which we often 
quote basic facts without further citation. 
In particular, $H^p$-functions have well-defined nontangential limits 
in $L^p(\TT)$, and we use 
the same notation for the function in $\DD$ and its trace on $\TT$.

Every real-valued $\varphi\in L^1(\TT)$ is $m$-a.e. the real part
of the nontangential limit of the complex analytic function
\begin{equation}\label{defHT}
F_\varphi(z)=\int_\TT \frac{t+z}{t-z} \varphi(t)\,dt,\ \ \ \ z\in\DD,
\end{equation}
which is called the \emph{Herglotz transform} of $\varphi$.
The map sending $\varphi$ to the imaginary part of $F_\varphi$ is the
\emph{conjugation operator}, denoted with the superscript ``$\,\check{}\,$'',
{\it i.e.} $F_\varphi=\varphi+i\check{\varphi}$;
it extends linearly to complex-valued functions.
By a theorem of M. Riesz, the conjugation operator
acts on $L^p(\TT), 1<p<\infty$.  Moreover, since
it is of convolution type, it commutes with $d/|dt|$
and integrating by parts one sees that it 
also acts on $W^{1,p}(\TT)$, the space of absolutely continuous functions
with $L^p$ derivative on $\TT$.

The Poisson kernel on $\DD$ is 
\begin{equation}\label{e09}
P(z,w)=P_w(z)=(1-|w|^2)/|z-w|^2,
\end{equation}
where  $z\in\TT,  w\in\DD$.

We shall need some basic facts from Sobolev space theory for which we refer
the reader to Adams-Fournier \cite{Adams}.
In particular, for $I\subset\TT$ an open arc,
those $\varphi$ for which
\begin{equation}
\label{defW1-1/p}
\int_{t,t'\in I}
\left|\frac{\varphi(t)-\varphi(t')}{t-t'}\right|^p\,dm(t)dm(t')<\infty,
\end{equation}
with $1<p<\infty$, form the \emph{fractional Sobolev space} 
$W^{1-1/p,p}(I)$ (that coincides with the Besov space $B^{1-1/p,p}_p(I)$).
It is a real interpolation space between $W^{1,p}(I)$ and $L^p(I)$:
$W^{1-1/p,p}(I)=[W^{1,p}(I),L^p(I)]_{1/p}$, that
embeds compactly into  $L^p(I)$. By interpolation
\cite[Sect.~7.3.2, Theorem 7.3.2]{Adams}, the conjugation 
operator also acts on $W^{1-1/p,p}(\TT)$.

When $h$ is defined on $E\subset\CC$ and $0<\alpha<1$,
we say that $h$ is H\"older continuous 
of exponent $\alpha$
if there is a constant $C > 0$ such that 
$|h(t)-h(t')|\le C|t-t'|^\alpha$ 
for all $t,t'\in E$. We then write $h\in H_\alpha(E)$.
Note that $H_\alpha(I)\subset W^{1-1/p,p}(I)$ if $1<p<1/(1-\alpha)$.
By the Sobolev embedding theorem \cite[Theorem 4.12]{Adams},  
it holds conversely that $W^{1-1/p,p}(I)\subset H_{1-2/p}(I)$ for $p>2$.

\section{Wall rational functions}\label{s2}
In this section we rewrite Section 4 from
\cite{Khrushchev2001} for the multipoint case. The presentation is very close to the original, and only some technical details are different. Roughly speaking, one mainly has to replace $z^n$ with $\cB_n$; that is why we generically give results accompanied by precise references to \cite{Khrushchev2001} and omit the proofs.
We start recalling basic definitions on continued fractions \cite{Wall}. 

A continued fraction is an infinite expression of the form 
\[
 b_0 + \frac{a_1}{b_1+\frac{a_2}{b_2+\frac{a_3}{b_3+\frac{a_4}{\dots}}}}.
\]
We conform the more economic notation
\[
  b_0 + \frac{a_1}{b_1}
\begin{array}{c}
        \\
        +
\end{array}
\frac{a_2}{b_2}
\begin{array}{c}
        \\
        +
\end{array}
\frac{a_3}{b_3}
\begin{array}{c}
        \\
        + \dots .
\end{array}
\]
For any complex-valued $\omega$, we let $t_0(\omega)=b_0 + \omega$ and, 
for $k\ge 1$,
\[ 
t_k(\omega)=\frac{a_k}{b_k+\omega}.
\]
By definition, the $n$-th convergent ${P_n}/{Q_n}$ of the continued fraction is 
 \[
 \frac{P_n}{Q_n}=t_0 \circ t_1 \circ \dots \circ t_n(0)
= b_0 + \frac{a_1}{b_1}
\begin{array}{c} \\+ \end{array}
\frac{a_2}{b_2}
\begin{array}{c} \\+ \end{array}
\ldots
\begin{array}{c} \\+ \end{array}
\frac{a_n}{b_n}.
\]
 
\begin{proposition}[{\cite{Khrushchev2001}, relations (3.2)-(3.4)}]
The quantities $P_n$ and $Q_n$ can be computed according
 to the recurrence relations:
\[
\left\{
\begin{array}{l}
P_{-1}=1, Q_{-1}=0,\\
P_0=b_0, Q_0=1,\\
P_{k+1}=b_{k+1} P_k + a_{k+1} P_{k-1}\\
Q_{k+1}=b_{k+1} Q_k + a_{k+1} Q_{k-1} 
\end{array}
\right.
\]
for $k\ge 0$. More generally,
\[
\frac{P_{n-1} \omega + P_n}{Q_{n-1} \omega + Q_n}= t_0 \circ t_1 \circ \dots \circ t_n(\omega).
\]
\label{Euler_recu}
\end{proposition}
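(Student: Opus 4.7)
The plan is to establish the general identity
$$\frac{P_{n-1}\omega + P_n}{Q_{n-1}\omega + Q_n} = t_0 \circ t_1 \circ \cdots \circ t_n(\omega)$$
together with the three-term recurrence by a simultaneous induction on $n$, since the two statements feed into each other. Once the general identity is proved, setting $\omega = 0$ recovers $P_n/Q_n = t_0\circ\cdots\circ t_n(0)$, which matches the definition of the $n$-th convergent.

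For the base cases, the conventions $P_{-1}=1$, $Q_{-1}=0$, $P_0=b_0$, $Q_0=1$ yield directly $t_0(\omega)=b_0+\omega = (P_{-1}\omega+P_0)/(Q_{-1}\omega+Q_0)$. A short computation then gives
$$t_0\circ t_1(\omega)=b_0+\frac{a_1}{b_1+\omega}=\frac{b_0(b_1+\omega)+a_1}{b_1+\omega}=\frac{P_0\omega+P_1}{Q_0\omega+Q_1}$$
with $P_1 = b_1P_0+a_1P_{-1}$ and $Q_1=b_1Q_0+a_1Q_{-1}$, which is exactly the claimed recurrence at the first step.

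For the inductive step, assume that the general identity holds at level $n-1$ (for every admissible $\omega$) and that the recurrence holds up to index $n-1$. Using associativity of composition and the fact that $t_n(\omega)=a_n/(b_n+\omega)$,
$$t_0\circ\cdots\circ t_n(\omega) = (t_0\circ\cdots\circ t_{n-1})\bigl(t_n(\omega)\bigr)=\frac{P_{n-2}\,t_n(\omega)+P_{n-1}}{Q_{n-2}\,t_n(\omega)+Q_{n-1}}.$$
Substituting $t_n(\omega)=a_n/(b_n+\omega)$ and clearing the common factor $(b_n+\omega)$ from numerator and denominator gives
$$\frac{(b_nP_{n-1}+a_nP_{n-2})+P_{n-1}\omega}{(b_nQ_{n-1}+a_nQ_{n-2})+Q_{n-1}\omega}.$$
Defining $P_n$ and $Q_n$ by the three-term recurrence in the statement, this is precisely $(P_{n-1}\omega+P_n)/(Q_{n-1}\omega+Q_n)$, closing the induction.

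There is no real obstacle here; the proof is essentially bookkeeping. The only subtlety is respecting the off-by-one shift between the indices of the recurrence and those of the composition, together with making careful use of the initial values $P_{-1}=1$, $Q_{-1}=0$, $P_0=b_0$, $Q_0=1$ so that the base case lines up with the general formula. Once this is set up, the inductive step is a single algebraic manipulation.
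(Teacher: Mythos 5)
Your proof is correct and is the standard Euler--Wallis argument; the paper itself omits the proof, pointing to \cite{Khrushchev2001}, relations (3.2)--(3.4), where essentially the same computation appears. The one remark worth making is about logical status: since $P_n/Q_n$ is a priori only defined as a ratio, the three-term recurrence is best read as the \emph{definition} of the individual quantities $P_n, Q_n$ starting from the stated initial data, and the content of the proposition is that the ratio so produced agrees with $t_0\circ\cdots\circ t_n(0)$ and, more generally, that the linear-fractional formula holds. You in fact do exactly this (you write ``Defining $P_n$ and $Q_n$ by the three-term recurrence\dots''), so the argument is sound; the phrase ``simultaneous induction on two statements that feed into each other'' slightly overstates the delicacy, since once the recurrence is taken as the definition there is only one statement to prove, by a single induction on $n$.
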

First we record the following fact.
\begin{proposition}
For $k\ge 1$, $\gamma_k$ depends only on $f^{(i)}(\alpha_j)$, $1\le j\le k+1$, $0 \le i < m_j$,  where $m_j$ is the multiplicity of $\alpha_j$ at the 
$k$-th step, 
{\it i.e. } $m_j$ is the number of times the value of $\alpha_j$ 
enters $(\alpha_l)_{1\le l\le k+1}$.
\label{gamma_values_f}
\end{proposition}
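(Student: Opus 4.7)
The plan is to prove the claim by induction on $k\ge 0$, tracking the Taylor expansion of each Schur remainder $f_j$ at the node $\alpha_{k+1}$ through the recursion \eqref{e02}. The base case $k=0$ is immediate: the list $(\alpha_l)_{1\le l\le 1}$ is $\{\alpha_1\}$, so $m_1=1$, and $\gamma_0=f_0(\alpha_1)=f^{(0)}(\alpha_1)$, which is precisely the datum allowed.

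For the inductive step, suppose the claim holds for all indices $<k$. Applied at $k-1$, the hypothesis says $\gamma_0,\ldots,\gamma_{k-1}$ are determined by $f^{(i)}(\alpha_j)$ for $1\le j\le k$ and $0\le i<m_j^{(k)}$, where $m_j^{(k)}$ is the multiplicity of $\alpha_j$ in the shorter list $(\alpha_l)_{1\le l\le k}$. Since $m_j^{(k)}\le m_j$, these are among the data listed in the statement, so it suffices to produce $\gamma_k=f_k(\alpha_{k+1})$ from them together with $f^{(i)}(\alpha_{k+1})$, $0\le i<m_{k+1}$.

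To this end, I would compute recursively the Taylor expansion of $f_j$ at $\alpha_{k+1}$ as $j$ increases. Since $|f_{j-1}|<1$ on $\DD$ and $|\gamma_{j-1}|<1$, the factor $1-\bar\gamma_{j-1}f_{j-1}$ is analytic and nonvanishing at $\alpha_{k+1}$. If $\alpha_j\ne\alpha_{k+1}$, then by \eqref{zetan} $\zeta_j$ is analytic and nonzero at $\alpha_{k+1}$, and $N$ Taylor coefficients of $f_j$ there are computable from $N$ coefficients of $f_{j-1}$. If $\alpha_j=\alpha_{k+1}$, then $\zeta_j$ has a simple zero at $\alpha_{k+1}$, matched by the simple zero of $f_{j-1}-\gamma_{j-1}$ arising from the definition $\gamma_{j-1}=f_{j-1}(\alpha_j)$; extracting $N$ Taylor coefficients of $f_j$ then demands $N+1$ coefficients of $f_{j-1}$, so exactly one order is lost at every such ``bad'' step.

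A simple count closes the argument: among the indices $1\le j\le k$, exactly $m_{k+1}-1$ satisfy $\alpha_j=\alpha_{k+1}$, since the entry $\alpha_{k+1}$ itself provides the $m_{k+1}$-th occurrence in $(\alpha_l)_{1\le l\le k+1}$. Starting with the $m_{k+1}$ known Taylor coefficients of $f_0=f$ at $\alpha_{k+1}$, the bad steps consume $m_{k+1}-1$ of them in total, leaving exactly one coefficient of $f_k$, namely $\gamma_k=f_k(\alpha_{k+1})$. The main obstacle I anticipate is the analytic bookkeeping at repeated nodes: one has to verify that each bad step propagates the germ of $f_{j-1}$ at $\alpha_{k+1}$ analytically with exactly the loss of one Taylor order, which follows from the interpolation condition built into the preceding Schur step, ensuring that $(f_{j-1}-\gamma_{j-1})/\zeta_j$ is analytic at $\alpha_{k+1}$. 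Once this local statement is checked, the counting argument above immediately yields the result.
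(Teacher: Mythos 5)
Your proof is correct and takes essentially the same route as the paper: the paper's (very terse) proof notes precisely the L'H\^opital identity $f_j(\alpha_j)=f_{j-1}'(\alpha_j)\,\dfrac{1-|\alpha_j|^2}{1-|f_{j-1}(\alpha_j)|^2}$ that quantifies the ``one Taylor order lost per repetition'' phenomenon and then invokes induction, which is exactly the bookkeeping you carry out explicitly.
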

\begin{proof}
Noticing in case of repetitions that $f_j(\alpha_j)=f_{j-1}^{\prime}(\alpha_j)  \frac{1-|\alpha_j|^2}{1-|f_{j-1}(\alpha_j)|^2}$, the proof is immediate by induction on \eqref{e02}.
\end{proof}

We now rewrite the recursive step of \eqref{e02} as
\begin{equation} \label{SchurStep}
f_{k-1} = \gamma_{k-1} + \frac{(1-|\gamma_{k-1}|^2)  \zeta_k}{\bar \gamma_{k-1} \zeta_k + \frac{1}{f_k}}.
\end{equation}
For $\omega\in\DD\setminus\{0\}$, set   
\begin{equation}\label{tau_k}
\tau_k(\omega)=\tau_k(\omega,z):=\gamma_{k} + \frac{(1-|\gamma_{k}|^2)  \zeta_{k+1}}{\bar \gamma_{k} \zeta_{k+1} + \frac{1}{\omega}},
\end{equation}
and put $\tau_k(0)=\gamma_k$. Hence, $f_k=\tau_k(f_{k+1})$ and
\begin{equation}
 f  =  \tau_0 \circ \tau_1 \circ \dots \circ \tau_n (f_{n+1}).
\label{f tau}
\end{equation}
In a way reminiscent of how we defined $P_n/Q_n$,
we obtain the Schur convergent $R_n$ of degree $n$ upon replacing   
$f_{n+1}$ by $0$ in \eqref{f tau}, that is,
\begin{equation}
R_n= \tau_0\circ\tau_1\circ\dots\circ\tau_{n-1}\circ\tau_{n}(0)
=  \tau_0\circ\tau_1\circ\dots\circ\tau_{n-1}(\gamma_n).
\label{Rn}
\end{equation}

\begin{proposition}
The rational function $R_n$ interpolates $f$ at 
$(\alpha_k)_{1 \le k \le n+1}$, counting multiplicities,
and  their first  $n+1$ Schur parameters coincide.
\end{proposition}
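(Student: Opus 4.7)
My plan is first to check that the Schur algorithm applied to $R_n$ produces $\gamma_0,\gamma_1,\ldots,\gamma_n$ as its first $n+1$ Schur parameters, and then to deduce the interpolation property from the vanishing of $\zeta_{k}$ at the node $\alpha_{k}$.

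First, I would invert the M\"obius-type map $\tau_k$ appearing in \eqref{tau_k}: a short direct computation shows that $v=\tau_k(\omega,z)$ implies $\omega=\zeta_{k+1}^{-1}(v-\gamma_k)/(1-\bar\gamma_k v)$, which is precisely one step of the Schur algorithm with parameter $\gamma_k$. Applying this inversion successively to $R_n=\tau_0\circ\tau_1\circ\cdots\circ\tau_{n-1}(\gamma_n)$ identifies the $k$-th Schur remainder of $R_n$ as $R_n^{(k)}=\tau_k\circ\cdots\circ\tau_{n-1}(\gamma_n)$ for $0\le k\le n-1$, while $R_n^{(n)}=\gamma_n$ (a constant). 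Evaluating at $z=\alpha_{k+1}$ and using $\zeta_{k+1}(\alpha_{k+1})=0$, which makes $\tau_k(\omega,\alpha_{k+1})=\gamma_k$ independently of $\omega$, shows that the $k$-th Schur parameter of $R_n$ equals $\gamma_k$ for $0\le k\le n$. This settles the second assertion.

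For the interpolation property at a simple node, the same freezing applies: at $z=\alpha_j$ with $1\le j\le n+1$ we have $\tau_{j-1}(\cdot,\alpha_j)\equiv\gamma_{j-1}$, hence $R_n(\alpha_j)=\tau_0\circ\cdots\circ\tau_{j-2}(\gamma_{j-1},\alpha_j)$ and $f(\alpha_j)=\tau_0\circ\cdots\circ\tau_{j-2}(f_{j-1}(\alpha_j),\alpha_j)$, and these coincide because $f_{j-1}(\alpha_j)=\gamma_{j-1}$ by \eqref{e02}.

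For multiplicities I would invoke Proposition \ref{gamma_values_f}: since the map sending the interpolation data $\{f^{(i)}(\alpha_j):1\le j\le n+1,\,0\le i<m_j\}$ to the Schur parameters $(\gamma_0,\ldots,\gamma_n)$ is algebraically invertible (each Schur step \eqref{e02} being so), equality of the first $n+1$ Schur parameters of $R_n$ and $f$ forces equality of all Hermite interpolation data at $\alpha_1,\ldots,\alpha_{n+1}$. The one non-routine point, and the main obstacle, is exactly this bookkeeping of derivatives at repeated nodes, which is handled cleanly by leaning on Proposition \ref{gamma_values_f} rather than differentiating through the nested composition $\tau_0\circ\cdots\circ\tau_{n-1}$ directly.
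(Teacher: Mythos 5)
Your proof is correct and takes essentially the same route as the paper: both arguments rest on the freezing identity $\tau_k(\omega,\alpha_{k+1})=\gamma_k$ and on the observation that one Schur step is the inverse of $\tau_k$, with an implicit induction on $k$ identifying the successive remainders of $R_n$ as $\tau_k\circ\cdots\circ\tau_{n-1}(\gamma_n)$. You present the two assertions in the opposite order (parameters first, interpolation second), and you make explicit, via the invertibility underlying Proposition \ref{gamma_values_f}, why coincidence of Schur parameters gives Hermite interpolation at repeated nodes, a point the paper's direct computation only verifies at the level of function values.
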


\begin{proof}
Note that $\tau_k(\omega,\alpha_{k+1})=\gamma_k$ is independent of $\omega$.  
Thus, for $0 \le k \le n$,
\begin{eqnarray*}
f(\alpha_{k+1})&=& \tau_0 \circ  \dots \circ \tau_k (\tau_{k+1} \circ \dots \circ \tau_n(f_{n+1}), \alpha_{k+1})\\
& =& \tau_0 \circ  \dots \circ \tau_k (\tau_{k+1} \circ \dots \circ 
\tau_n (0), \alpha_{k+1})\\
& =& R_n(\alpha_{k+1}).
\end{eqnarray*}
Consequently, $R_n$ interpolates $f$ at the point $\alpha_{k+1}$.

The remaining part of the claim is proven by induction. The base of induction being obvious, suppose that the  $k$ first Schur parameters of $f$ and
$R_n$ coincide.
Then, denoting $R_n^{[1]}, \dots R_n^{[n]}$ the Schur remainders
of $R_n$, we see that $R_n^{[k]}=\tau_{k-1}^{-1} \circ  \dots \circ \tau_0^{-1}(R_n)$, and
\begin{eqnarray*}
R_n^{[k]}(\alpha_{k+1}) &=& \tau_{k-1}^{-1} \circ  \dots \circ \tau_0^{-1}(R_n, \alpha_{k+1})\\
&=& \tau_{k-1}^{-1} \circ  \dots \circ \tau_0^{-1} \circ \tau_0\circ\tau_1\circ\dots\circ\tau_{n-1}(\gamma_n, \alpha_{k+1})\\
&=& \tau_k(\tau_{k+1}\circ\dots\circ\tau_{n-1}(\gamma_n),\alpha_{k+1}) = \gamma_{k+1}.
\end{eqnarray*}
Therefore, the $k+1$-th Schur parameter of $R_n$ and $f$ coincide.
\end{proof}
The Schur algorithm can be readily connected to the continued fractions. 
Indeed, let  $P_n/Q_n$ be the sequence of convergents associated to  
\begin{equation}
  \gamma_0 + \frac{(1-|\gamma_0|^2) \zeta_1}{\bar \gamma_0 \zeta_1}
\begin{array}{c}
        \\
        +
\end{array}
\frac{1}{\gamma_1}
\begin{array}{c}
        \\
        +
\end{array}
\frac{(1-|\gamma_1|^2) \zeta_2}{\bar \gamma_1 \zeta_2}
\begin{array}{c}
        \\
        + \dots.
\end{array}
\label{fraction continue algo}
\end{equation}
Then, the functions $R_n$ are none but the $P_{2n}/Q_{2n}$.

For $n\ge 1$, we have by Proposition \ref{Euler_recu}
\begin{equation}
\begin{split}
        P_{2n} & =  \gamma_{n} P_{2n-1} + P_{2n-2}\\    
        Q_{2n} & =  \gamma_{n} Q_{2n-1} + Q_{2n-2} \\
        P_{2n-1} & =  \bar \gamma_{n-1} \zeta_{n} P_{2n-2} + (1-|\gamma_{n-1}|^2) \zeta_{n} P_{2n-3}\\
        Q_{2n-1} & =  \bar \gamma_{n-1} \zeta_{n} Q_{2n-2} + (1-|\gamma_{n-1}|^2) \zeta_{n} Q_{2n-3}
\end{split}
\label{Euler}
\end{equation}
with 
\[
\begin{array}{llll}
 P_{-1}=1, & P_0 = \gamma_0, & Q_{-1}=0, & Q_0=1.
\end{array}
\]
For $P_{2n}$ and $Q_{2n}$, we easily prove the next lemma.
 
\begin{lemma}[{\cite{Khrushchev2001}, Lemma 4.1}]
\label{relations_order_even_and_odd}
 For $n \ge 0$, we have $P_{2n+1}, Q_{2n+1} \in \mathcal{L}_{n+1}$, $P_{2n}, Q_{2n} \in \mathcal{L}_{n}$ and
\[
 \begin{array}{ll}
  P_{2n+1}=\zeta_{n+1} Q_{2n}^*, & Q_{2n+1}= \zeta_{n+1} P_{2n}^*
 \end{array}.
\]
\end{lemma}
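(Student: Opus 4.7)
My plan is to prove the four claims simultaneously by induction on $n$, using the recurrences (Euler) together with the convention that for $g\in\cL_n$ the star is $g^*=\cB_n g_*$. The crucial book-keeping ingredient is the compatibility rule: if $g\in\cL_n$, viewing $g$ as an element of $\cL_{n+1}$ replaces $g^*$ by $\zeta_{n+1}g^*$. From this, and $(g^*)^*=g$ within a fixed level, one gets the useful identity $Q_{2n+1}^*=P_{2n}$ (star taken in $\cL_{n+1}$), since $Q_{2n+1}=\zeta_{n+1}P_{2n}^{\,*_n}$ by the induction hypothesis; symmetrically $P_{2n+1}^*=Q_{2n}$.

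For the base case $n=0$, I would simply plug $P_{-1}=1$, $Q_{-1}=0$, $P_0=\gamma_0$, $Q_0=1$ into (Euler) at level $n=1$: this yields $P_1=\zeta_1$ and $Q_1=\bar\gamma_0\zeta_1$, and both equal $\zeta_1 Q_0^*$ and $\zeta_1 P_0^*$ respectively, establishing the two star identities and the containments in $\cL_0$, $\cL_1$.

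For the inductive step, first the containments: the even-indexed recurrence $P_{2n+2}=\gamma_{n+1}P_{2n+1}+P_{2n}$ has both summands in $\cL_{n+1}$ (using $\cL_n\subset\cL_{n+1}$), so $P_{2n+2},Q_{2n+2}\in\cL_{n+1}$; the odd-indexed recurrence then puts $P_{2n+3},Q_{2n+3}\in\cL_{n+2}$ because of the $\zeta_{n+2}$ factor. For the star identities I would apply the star operation (at level $n+1$) to $Q_{2n+2}=\gamma_{n+1}Q_{2n+1}+Q_{2n}$. Bilinearity together with the compatibility rule and the identity $Q_{2n+1}^*=P_{2n}$ yields
\[
Q_{2n+2}^*=\bar\gamma_{n+1}P_{2n}+\zeta_{n+1}Q_{2n}^{\,*_n}=\bar\gamma_{n+1}P_{2n}+P_{2n+1},
\]
where the last step uses the induction hypothesis $P_{2n+1}=\zeta_{n+1}Q_{2n}^{\,*_n}$. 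Multiplying by $\zeta_{n+2}$ and substituting $P_{2n+2}=\gamma_{n+1}P_{2n+1}+P_{2n}$ into (Euler) for $P_{2n+3}$, the coefficient of $P_{2n+1}$ collapses from $|\gamma_{n+1}|^2+(1-|\gamma_{n+1}|^2)$ to $1$, giving $P_{2n+3}=\zeta_{n+2}Q_{2n+2}^*$. The identity $Q_{2n+3}=\zeta_{n+2}P_{2n+2}^*$ follows in the same fashion, exchanging the roles of $P$ and $Q$ throughout.

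The only real subtlety, and the main obstacle, is the level-dependence of the star operation: without the rule $g^{*_{n+1}}=\zeta_{n+1}g^{*_n}$ the algebra does not close between consecutive indices. Once this point is pinned down, the remaining work is a short algebraic simplification driven by the two block recurrences in (Euler).
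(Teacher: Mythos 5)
Your proof is correct and is essentially the expected argument — induction on $n$ driven by the Euler recurrences \eqref{Euler}, with the level-dependence of the $*$-operation as the crucial bookkeeping device. The paper itself omits the proof (it simply points to Khrushchev's Lemma 4.1, where the classical analogue $P_{2n+1}=zQ_{2n}^*$ is proved the same way), so there is nothing to compare against except the intended route, which you have followed.

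A few small clarifying remarks. The rule you call a ``compatibility rule'' — that regarding $g\in\cL_n$ as an element of $\cL_{n+1}$ turns $g^{*_n}=\cB_n g_*$ into $g^{*_{n+1}}=\cB_{n+1}g_*=\zeta_{n+1}g^{*_n}$ — is exactly what makes the induction close, and you are right that this is the only delicate point. The auxiliary identities $Q_{2n+1}^{*_{n+1}}=P_{2n}$ and $P_{2n+1}^{*_{n+1}}=Q_{2n}$ follow correctly from the induction hypothesis once one uses $({\zeta_{n+1}})_*=\zeta_{n+1}^{-1}$, $(\cB_n)_*=\cB_n^{-1}$ and the involutivity $(g^{*_n})^{*_n}=g$. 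One tiny terminological slip: you invoke ``bilinearity'' of $*$, but the operation is \emph{conjugate}-linear (since $(ag+bh)_*=\bar a g_*+\bar b h_*$); you nevertheless applied it correctly, since the conjugate $\bar\gamma_{n+1}$ appears where it should. Finally, the collapse $|\gamma_{n+1}|^2+(1-|\gamma_{n+1}|^2)=1$ after substituting $P_{2n+2}=\gamma_{n+1}P_{2n+1}+P_{2n}$ into the odd-indexed recurrence for $P_{2n+3}$ is exactly the algebraic miracle one needs, and your base case verifying $P_1=\zeta_1$, $Q_1=\bar\gamma_0\zeta_1$ from the initial data is correct.
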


Mimicking \cite{Khrushchev2001}, formulas (4.5), (4.12), we get 
\begin{equation}
\label{recu_Wall}
\left[
        \begin{array}{ll}
        Q_{2n}^* & P_{2n}^*\\
        P_{2n} & Q_{2n}
        \end{array}
\right]
=
\left(
\prod_{k=n}^{1}
\left[
        \begin{array}{ll}
        1 & \bar \gamma_k\\
        \gamma_k & 1
        \end{array}
\right]
\left[
        \begin{array}{ll}
        \zeta_{k} & 0\\
        0 & 1
        \end{array}
\right]
\right)
\left[
        \begin{array}{ll}
        1 & \bar{\gamma_0}\\
        \gamma_0 & 1
        \end{array}
\right],
\end{equation}
with $n \ge 1$.
Let us set $A_n= P_{2n}$,  
$B_n = Q_{2n}$ and choose  the representative $R_n=A_n/B_n$ for $R_n$.

\begin{definition}\label{d1}
${A_n}$ and ${B_n}$ are called the $n$-th {\emph Wall rational functions} 
associated to the Schur function $f$ and the sequence $(\alpha_k)$.
\end{definition}

In the new notation, the previous relation reads as
\begin{proposition}[{\cite{Khrushchev2001}, relation (4.12)}]
We have 
\begin{eqnarray}
\left[
        \begin{array}{ll}
        B_{n}^* & A_{n}^*\\
        A_{n} & B_{n}
        \end{array}
\right]
& = &
\left(
\prod_{k=n}^{1}
\left[
        \begin{array}{ll}
        1 & \bar \gamma_k\\
        \gamma_k & 1
        \end{array}
\right]
\left[
        \begin{array}{ll}
        \zeta_{k} & 0\\
        0 & 1
        \end{array}
\right]
\right)
\left[
        \begin{array}{ll}
        1 & \bar{\gamma_0}\\
        \gamma_0 & 1
        \end{array}
\right].
\label{recuPQ}
\end{eqnarray}
\label{recurrence_An_Bn}
\end{proposition}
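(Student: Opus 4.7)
Introduce the shorthand
\[
T_n := \begin{bmatrix} B_n^* & A_n^* \\ A_n & B_n \end{bmatrix}, \quad
M_k := \begin{bmatrix} 1 & \bar\gamma_k \\ \gamma_k & 1 \end{bmatrix}, \quad
Z_k := \begin{bmatrix} \zeta_k & 0 \\ 0 & 1 \end{bmatrix}.
\]
The asserted identity then reads $T_n = M_n Z_n M_{n-1} Z_{n-1} \cdots M_1 Z_1 M_0$, which I would prove by induction on $n$ via the one-step recursion $T_n = M_n Z_n T_{n-1}$ for $n \geq 1$, together with the base case $T_0 = M_0$. The latter is immediate: Euler's recurrence gives $A_0 = P_0 = \gamma_0$ and $B_0 = Q_0 = 1$, and since $A_0, B_0 \in \cL_0 = \CC$, the star operation reduces to complex conjugation, yielding $A_0^* = \bar\gamma_0$ and $B_0^* = 1$.

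To establish $T_n = M_n Z_n T_{n-1}$, I would expand the right-hand side and match entries, reducing the claim to the four scalar identities
\begin{align*}
A_n &= \gamma_n \zeta_n B_{n-1}^* + A_{n-1}, & B_n &= \gamma_n \zeta_n A_{n-1}^* + B_{n-1}, \\
B_n^* &= \zeta_n B_{n-1}^* + \bar\gamma_n A_{n-1}, & A_n^* &= \zeta_n A_{n-1}^* + \bar\gamma_n B_{n-1}.
\end{align*}
The first row follows directly by substituting the relations $P_{2n-1} = \zeta_n Q_{2n-2}^*$ and $Q_{2n-1} = \zeta_n P_{2n-2}^*$ from Lemma \ref{relations_order_even_and_odd} into the even-index recurrences $P_{2n} = \gamma_n P_{2n-1} + P_{2n-2}$ and $Q_{2n} = \gamma_n Q_{2n-1} + Q_{2n-2}$ of \eqref{Euler}, and relabelling $A_n = P_{2n}$, $B_n = Q_{2n}$.

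The second row is obtained by applying the parahermitian conjugation $g \mapsto g_*$ to each identity of the first row and then multiplying through by $\cB_n$. Using anti-linearity of $(\cdot)_*$ together with the identities $\zeta_{n*} = \zeta_n^{-1}$, $(g^*)_* = \cB_k^{-1} g$ for $g \in \cL_k$, and $\cB_n = \zeta_n \cB_{n-1}$, the extra Blaschke factors cancel and the two starred relations emerge in the stated form. This closes the induction.

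\textbf{Main obstacle.} The argument is essentially bookkeeping rather than conceptual: the only real pitfall is correctly tracking which Blaschke denominator accompanies each starred expression, since $A_{n-1} \in \cL_{n-1}$ whereas $A_n \in \cL_n$, so the $*$-operation changes its meaning when moving between consecutive indices. Keeping the degree assignments of Lemma \ref{relations_order_even_and_odd} in view makes this automatic. The proof mirrors \cite[\S 4]{Khrushchev2001} with $\cB_k$ playing the role of $z^k$, so no genuinely new ideas intervene beyond this accounting.
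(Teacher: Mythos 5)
Your proof is correct and follows essentially the same route as the paper, which for this proposition simply says to mimic Khrushchev \cite[formulas (4.5), (4.12)]{Khrushchev2001}: the one-step matrix factorization $T_n = M_n Z_n T_{n-1}$ is exactly what the even-index Euler recurrences combined with Lemma \ref{relations_order_even_and_odd} yield, and your bookkeeping with the parahermitian conjugate to obtain the starred row is the standard adaptation of Khrushchev's polynomial argument with $\cB_k$ in place of $z^k$.
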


The dependence of $A_n$, $B_n$ on $f$ and $(\alpha_k)$ will be usually 
dropped. For convenience, we abbreviate ``Wall rational function'' 
as WRF or Wall RF.

\begin{corollary}[{\cite{Khrushchev2001}, relations (4.14), (4.15)}]
\label{det_A_B_on_T}
The Wall RFs $A_n, B_n$ have the following properties:
\begin{enumerate}
\item $B_n(z)B_n^*(z) - A_n(z) A_n^*(z) = \mathcal{B}_{n}(z) \omega_n$,
 \item $ |B_{n}(\xi)|^2 - |A_{n}(\xi)|^2 = \omega_n$ on $\mathbb{T} $,
 \item $ f (\alpha_i) = A_n/B_n(\alpha_i) = B_n^*/A_n^*(\alpha_i)$, for $1\le i \le n+1$,
\end{enumerate}
where
\[
 \omega_n = \prod_{k=0}^{n}{(1-|\gamma_k|^2)}.
\]
\end{corollary}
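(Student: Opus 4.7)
The three assertions fall out directly from the matrix factorization \eqref{recuPQ} together with the interpolation proposition proved just before Definition~\ref{d1}.

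For item \textit{(1)}, I would take the determinant of both sides of \eqref{recuPQ}. The left-hand determinant equals $B_nB_n^* - A_nA_n^*$; on the right, multiplicativity of $\det$ splits the product into
\[
\prod_{k=0}^{n}\det\!\begin{pmatrix}1&\bar\gamma_k\\\gamma_k&1\end{pmatrix}\cdot\prod_{k=1}^{n}\det\!\begin{pmatrix}\zeta_k&0\\0&1\end{pmatrix}\;=\;\prod_{k=0}^n(1-|\gamma_k|^2)\cdot\prod_{k=1}^n\zeta_k\;=\;\omega_n\cB_n,
\]
the last equality being the very definition \eqref{Bn} of $\cB_n$. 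For \textit{(2)}, the key remark is that for any $g\in\cL_n$ and any $\xi\in\TT$ one has $g_*(\xi)=\overline{g(\xi)}$ (since $1/\bar\xi=\xi$ on $\TT$), so that $g^*(\xi)=\cB_n(\xi)\overline{g(\xi)}$. Applied to $A_n$ and $B_n$, this rewrites the left side of (1) on $\TT$ as $\cB_n(|B_n|^2-|A_n|^2)$, and one divides by $\cB_n$, which is unimodular on $\TT$.

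For \textit{(3)}, the identity $f(\alpha_i)=A_n/B_n(\alpha_i)$ for $1\le i\le n+1$ is exactly the content of the proposition recalled above, since $R_n=A_n/B_n$ by construction; no additional work is needed. The secondary equality $A_n/B_n=B_n^*/A_n^*$ at the nodes is then extracted from (1): after cross-multiplying it amounts to $B_nB_n^*=A_nA_n^*$, i.e.\ to $\cB_n\omega_n=0$, and since $\omega_n\ne0$ this holds precisely on the zero set of $\cB_n$, namely $\{\alpha_1,\dots,\alpha_n\}$.

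I foresee no substantial obstacle: once \eqref{recuPQ} is in hand the whole proof is linear algebra combined with the two elementary facts $|\cB_n|\equiv1$ on $\TT$ and $g_*(\xi)=\overline{g(\xi)}$ for $\xi\in\TT$. The one point worth flagging is the asymmetric role of the final node $\alpha_{n+1}$: since $\cB_n(\alpha_{n+1})$ need not vanish (already the case $n=0$ with $\alpha_1=0$ gives $A_0/B_0(0)=\gamma_0$ versus $B_0^*/A_0^*(0)=1/\bar\gamma_0$), the symmetric-form equality in (3) is only automatic over $\{\alpha_1,\dots,\alpha_n\}$, whereas the interpolation identity $f(\alpha_i)=A_n/B_n(\alpha_i)$ itself persists up to $i=n+1$.
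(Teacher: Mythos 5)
Your proof is correct and uses the same key step as the paper, whose entire proof consists of the one line that one takes the determinant in \eqref{recuPQ}. The supporting facts you supply for items (2) and (3) — the identity $g^*(\xi)=\cB_n(\xi)\overline{g(\xi)}$ for $g\in\cL_n$, $\xi\in\TT$, and the interpolation property of $R_n=A_n/B_n$ established just before Definition \ref{d1} — are exactly what the paper leaves implicit for the reader.

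Your caveat about item (3) at $i=n+1$ is also well taken, and worth keeping. Item (1) forces $A_n/B_n(\alpha_i)=B_n^*/A_n^*(\alpha_i)$ only at the zeros of $\cB_n=\prod_{k=1}^n\zeta_k$, i.e.\ on $\{\alpha_1,\dots,\alpha_n\}$, and your $n=0$ case ($\gamma_0$ versus $1/\bar\gamma_0$) shows the equality genuinely fails at $\alpha_{n+1}$ whenever that node is new, since $|\gamma_0|<1$ by the standing assumption. For general $n$, the two quotients differ at $\alpha_{n+1}$ by $-\cB_n(\alpha_{n+1})\,\omega_n/(B_nA_n^*)(\alpha_{n+1})$, which is nonzero whenever $\cB_n(\alpha_{n+1})\neq 0$. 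Thus the range $1\le i\le n+1$ really only attaches to the interpolation identity $f(\alpha_i)=A_n/B_n(\alpha_i)$, while the second equality should be read on $1\le i\le n$. This is an imprecision in the statement rather than a gap in your argument; note that in the classical case $\alpha_k\equiv 0$ all nodes coincide (with $\cB_n(0)=0$ for $n\ge1$), so Khrushchev's original assertion is unaffected.
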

The proof is by taking the determinant in \eqref{recuPQ}.  

\begin{proposition}[{\cite{Khrushchev2001}, Lemma 4.5}]
For $n \ge 0$, we have
\begin{enumerate}
\item $A_n$, $A^*_n$ and $B_n$ lie in $\cL_n$,
\item $B_n$ does not vanish on $\overline{\mathbb{D}}$, 
\item $A_n/B_n$ and $A_n^*/B_n$ are Schur functions.
\end{enumerate}
\end{proposition}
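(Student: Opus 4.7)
\emph{Proof plan.} I would prove the three claims by a simultaneous induction on $n$. The base case $n=0$ is immediate: with the product over $k$ empty, \eqref{recuPQ} reduces to $\left[\begin{smallmatrix} 1 & \bar\gamma_0 \\ \gamma_0 & 1\end{smallmatrix}\right]$, so $A_0=\gamma_0$, $A_0^*=\bar\gamma_0$, $B_0=1$ all lie in $\cL_0$, $B_0$ is nowhere zero on $\overline{\DD}$, and $|A_0/B_0|=|A_0^*/B_0|=|\gamma_0|<1$.

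Assume (1)--(3) hold at level $n-1$. For (1) at level $n$, the identifications $A_n=P_{2n}$, $B_n=Q_{2n}$ combined with Lemma \ref{relations_order_even_and_odd} already put $A_n,B_n$ in $\cL_n$; the remark after \eqref{Ln} that $g\mapsto g^*$ preserves $\cL_n$ then gives $A_n^*\in\cL_n$ as well.

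For (2), I would split off the leftmost matrix factor in \eqref{recuPQ} to obtain the one-step recursion
\begin{equation*}
A_n=A_{n-1}+\gamma_n\zeta_n B_{n-1}^*, \qquad B_n=B_{n-1}+\gamma_n\zeta_n A_{n-1}^*,
\end{equation*}
and then factor $B_n=B_{n-1}\bigl(1+\gamma_n\zeta_n\,A_{n-1}^*/B_{n-1}\bigr)$. By the inductive hypothesis, $A_{n-1}^*/B_{n-1}\in\cS$; since $B_{n-1}$ has no zeros on $\overline{\DD}$ this quotient is a rational function continuous on $\overline{\DD}$, so $|A_{n-1}^*/B_{n-1}|\le 1$ pointwise there. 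Together with $|\zeta_n|\le 1$ on $\overline{\DD}$ and $|\gamma_n|<1$, this yields
\begin{equation*}
\Bigl|\,1+\gamma_n\zeta_n\,\frac{A_{n-1}^*}{B_{n-1}}\,\Bigr|\ge 1-|\gamma_n|>0\quad\text{on }\overline{\DD},
\end{equation*}
so $B_n$ inherits from $B_{n-1}$ the property of not vanishing on $\overline{\DD}$.

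Finally, for (3) at level $n$, Corollary \ref{det_A_B_on_T}(2) gives $|B_n|^2-|A_n|^2=\omega_n>0$ on $\TT$, hence $|A_n|<|B_n|$ there; moreover $|A_n^*|=|\cB_n|\,|(A_n)_*|=|A_n|$ on $\TT$, so the same strict inequality is valid with $A_n$ replaced by $A_n^*$. Having just established (2), the rational functions $A_n/B_n$ and $A_n^*/B_n$ are analytic on a neighborhood of $\overline{\DD}$, and the maximum modulus principle upgrades both bounds from $\TT$ to $\DD$; thus $A_n/B_n,\,A_n^*/B_n\in\cS$. The only subtle point—and in that sense the main obstacle—is that (2) at step $n$ invokes (3) at step $n-1$, which forces the three statements to be carried through the induction in parallel rather than one after the other.
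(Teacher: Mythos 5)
Your proof is correct and is essentially the standard inductive argument that the paper itself omits, deferring instead to \cite[Lemma 4.5]{Khrushchev2001}; you have faithfully adapted it to the multipoint setting. The crucial step --- extracting the one-step recursion $B_n = B_{n-1} + \gamma_n\zeta_n A_{n-1}^*$ from \eqref{recuPQ}, factoring out $B_{n-1}$ to isolate the level-$(n-1)$ Schur quotient $A_{n-1}^*/B_{n-1}$, bounding $|\gamma_n\zeta_n A_{n-1}^*/B_{n-1}|\le|\gamma_n|<1$ on $\overline{\DD}$, and invoking Corollary \ref{det_A_B_on_T}(2) to close the circle --- all checks out, and you correctly identify that the simultaneous induction is forced because (2) at level $n$ requires (3) at level $n-1$.
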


These preparations bring us to the following important
\begin{theorem}[{\cite{Khrushchev2001}, Theorem 4.6}]
\label{relation between f and An,Bn}
The Wall RFs $A_n$ and $B_n$ are connected to $f$ and $f_{n+1}$ by
\begin{equation}
\label{NP}
 f(z)= \frac{A_n(z)+\zeta_{n+1}(z) B_n^*(z) f_{n+1}(z)}{B_n(z)+ \zeta_{n+1}(z) A_n^*(z) f_{n+1}(z)}.
\end{equation}
\end{theorem}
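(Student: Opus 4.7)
The plan is to proceed by straightforward induction on $n$, driven by the transfer matrix recurrence \eqref{recuPQ} together with the forward Schur step from \eqref{e02}. For the base case $n=0$, one has $A_0 = \gamma_0$, $B_0 = 1$, $A_0^* = \bar\gamma_0$, $B_0^* = 1$, so \eqref{NP} reduces to $(\gamma_0 + \zeta_1 f_1)/(1 + \bar\gamma_0\zeta_1 f_1) = f$; solving \eqref{e02} for $\zeta_1 f_1 = (f-\gamma_0)/(1-\bar\gamma_0 f)$ and substituting verifies this in one line.

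For the inductive step, I would first unpack \eqref{recuPQ} by peeling off the leftmost pair of triangular factors to obtain the four elementary recurrences
\begin{align*}
A_n &= A_{n-1} + \gamma_n \zeta_n B_{n-1}^*, & B_n^* &= \bar\gamma_n A_{n-1} + \zeta_n B_{n-1}^*, \\
B_n &= B_{n-1} + \gamma_n \zeta_n A_{n-1}^*, & A_n^* &= \bar\gamma_n B_{n-1} + \zeta_n A_{n-1}^*.
\end{align*}
Granted \eqref{NP} at level $n-1$, one then substitutes the forward form of \eqref{SchurStep},
\[
 f_n = \gamma_n + \frac{(1-|\gamma_n|^2)\zeta_{n+1} f_{n+1}}{1 + \bar\gamma_n \zeta_{n+1} f_{n+1}},
\]
clears the inner fraction by multiplying numerator and denominator by $1+\bar\gamma_n\zeta_{n+1}f_{n+1}$, and regroups: the cross coefficient simplifies through $|\gamma_n|^2 + (1-|\gamma_n|^2) = 1$, and the four recurrences above then repackage the numerator as $A_n + \zeta_{n+1}B_n^* f_{n+1}$ and the denominator as $B_n + \zeta_{n+1}A_n^* f_{n+1}$, closing the induction.

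An alternative, conceptually more transparent route is to read the identity off the continued-fraction development \eqref{fraction continue algo}. A short induction shows $\tau_0\circ\cdots\circ\tau_n(\omega) = t_0\circ\cdots\circ t_{2n+1}(1/\omega)$, where $t_k$ are the elementary M\"obius atoms of \eqref{fraction continue algo}; then applying Proposition \ref{Euler_recu} with $\omega = 1/f_{n+1}$ and invoking Lemma \ref{relations_order_even_and_odd} to substitute $P_{2n+1} = \zeta_{n+1}B_n^*$, $Q_{2n+1} = \zeta_{n+1}A_n^*$ turns \eqref{f tau} directly into \eqref{NP}. Either approach is essentially linear algebra on the transfer matrices and I do not anticipate serious obstacles; the only delicate point is careful bookkeeping of the $*$-operation and of the index shift between the $P_k/Q_k$ indexing of the continued-fraction convergents and the $A_n/B_n$ indexing of the Wall rational functions.
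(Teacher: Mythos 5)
Both routes you sketch are correct, and the bookkeeping works out exactly as you predict. The paper itself omits the proof, deferring to Khrushchev's Theorem 4.6, whose argument in the classical case runs precisely along your second route: one composes the M\"obius atoms of the continued fraction \eqref{fraction continue algo}, applies Proposition \ref{Euler_recu} with $\omega=1/f_{n+1}$, and then invokes Lemma \ref{relations_order_even_and_odd} to convert $P_{2n+1},Q_{2n+1}$ into $\zeta_{n+1}B_n^*,\zeta_{n+1}A_n^*$, so your alternative is essentially the proof the paper points to, with $z^{n+1}$ replaced by $\zeta_{n+1}$ and the $*$-operation taken in the sense of $\cL_n$. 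Your first route is a self-contained direct induction that bypasses the continued-fraction indexing entirely: the four elementary recurrences you extract from \eqref{recuPQ} by peeling the leftmost $2\times2$ block are exactly right, and substituting $f_n = (\gamma_n+\zeta_{n+1}f_{n+1})/(1+\bar\gamma_n\zeta_{n+1}f_{n+1})$ into the level-$(n-1)$ identity regroups in one step to $(A_n+\zeta_{n+1}B_n^*f_{n+1})/(B_n+\zeta_{n+1}A_n^*f_{n+1})$. What the direct induction buys is that one never needs to track the even/odd parity of the $P_k/Q_k$ convergents or the identity $\tau_0\circ\cdots\circ\tau_n(\omega)=t_0\circ\cdots\circ t_{2n+1}(1/\omega)$; what the continued-fraction route buys is that it makes transparent \emph{why} the formula has the Nevanlinna linear-fractional shape alluded to after the theorem. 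Either is acceptable and there is no gap.
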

The theorem  shows that, in Nevanlinna's 
parametrization of all Schur interpolants to $f$ 
at $(\alpha_k)_{1\le k\le n+1}$
\cite[Ch. IV, Lemma 6.1]{Garnett}, the value zero for the parameter 
yields $R_n=A_n/B_n$ while the value $f_{n+1}$ yields $f$.

\section{ORFs and Geronimus' theorem}\label{s3}
The results of this section are borrowed from \cite{Bultheel1999, Bultheel2006}. 
We formulate the results and briefly discuss them for the completeness of presentation; the proofs are generically omitted.

\subsection{Orthogonal rational functions}\label{ss31}

Let $\mu$ be a positive probability measure on $\TT$ with infinite support. 
Obviously,  $\cL_n$ is a (closed) subspace of $L^2(\mu)$, and, following \cite[Ch. 3]{Bultheel1999}, 
we regard it as a reproducing kernel Hilbert space. The reproducing kernels for $\cL_n$ are easily seen to satisfy the so-called Christoffel-Darboux relations, which can be interpreted as recurrence relations for the ORFs $(\phi_n)$. Namely, we have

\begin{theorem}[{\cite{Bultheel1999}, Theorem 4.1.1}]
\label{t0001}
For $n\geq1$, it holds that
\[
\left[
        \begin{array}{c}
        \phi_n(z)\\
        \phi_n^*(z)
        \end{array}
\right]
=
T_n(z)
\left[
        \begin{array}{c}
        \phi_{n-1}(z)\\
        \phi_{n-1}^*(z)
        \end{array}
\right],
\]
where
\begin{eqnarray}\label{ttilden}
        T_n(z)&=&\sqrt{\frac{1-|\alpha_n|^2}{1-|\alpha_{n-1}|^2}}\frac{1}
        {\sqrt{1-|\tilde{\gamma}_{n}|^2}}
        \frac{1-\bar\alpha_{n-1} z}{1-\bar\alpha_n z} 
        \left[
                \begin{array}{cc}
                1 & -\overline{\tilde{\gamma}_{n}}\\
                -\tilde{\gamma}_{n} & 1
                \end{array}
        \right]\\
&&      \times\,\left[
                \begin{array}{cc}
                \lambda_n &  0 \\
                0 &\bar \lambda_n
                \end{array}
        \right]
                \left[
                \begin{array}{cc}
                \zeta_{n-1}(z)&0\\
                0&1
                \end{array}
        \right],\nonumber
\end{eqnarray}
and
\begin{eqnarray}\label{lambdan}
\tilde{\gamma}_n=-\frac{\overline{\phi_{n}(\alpha_{n-1})}}{\overline{\phi_{n}^*(\alpha_{n-1})}}, &&
\eta_n =\frac{1-\alpha_n\bar\alpha_{n-1}}{1-\bar\alpha_n\alpha_{n-1}} ,\\
&&\lambda_n = \frac{|1-\bar\alpha_n\alpha_{n-1}|}{1-\alpha_n\bar\alpha_{n-1}}
        \frac{\overline{\phi_{n}^*(\alpha_{n-1})}}{|\phi_{n}^*(\alpha_{n-1})|}  \frac{\overline{\kappa_{n-1}}}{|\kappa_{n-1}|} \, \eta_n. \label{etan1}
\end{eqnarray}
\label{recu ORF first kind}
\end{theorem}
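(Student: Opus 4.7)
My proof plan proceeds as follows. The general strategy is to derive a Szeg\H{o}-type recurrence in the reproducing kernel Hilbert space $\cL_n\subset L^2(\mu)$. Since $\dim(\cL_n/\cL_{n-1})=1$, the goal is to express $\phi_n$ and $\phi_n^*$ explicitly as linear combinations of $\phi_{n-1}$ and $\phi_{n-1}^*$ multiplied by appropriate rational correctors.

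First, I would introduce the auxiliary functions
\[
u(z):=\frac{1-\bar\alpha_{n-1}z}{1-\bar\alpha_n z}\,\zeta_{n-1}(z)\phi_{n-1}(z),\qquad
v(z):=\frac{1-\bar\alpha_{n-1}z}{1-\bar\alpha_n z}\,\phi_{n-1}^*(z),
\]
and verify by elementary numerator/denominator arithmetic based on \eqref{Ln} that $u,v\in\cL_n$ while neither lies in $\cL_{n-1}$: the prefactor $(1-\bar\alpha_{n-1}z)/(1-\bar\alpha_n z)$ trades the denominator factor $1-\bar\alpha_{n-1}z$ (which divides $\pi_{n-1}$) for $1-\bar\alpha_n z$, yielding rational functions with denominator $\pi_n$ and numerator of degree at most $n$. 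The key claim is that $\phi_n$ coincides with a specific linear combination $A\,u-B\,v$ up to a normalizing scalar. To establish this I would invoke the Christoffel--Darboux machinery: compute the reproducing kernel $k_n(z,w)$ of $\cL_n$ in two ways, once inductively from $k_{n-1}(z,w)$ by exploiting the multiplicative structure of $\cL_n$ through $\zeta_{n-1}$ and the prefactor, and once via the standard identity $k_n(z,w)-k_{n-1}(z,w)=\phi_n(z)\overline{\phi_n(w)}$. Matching the two expressions produces the claimed combination with a coefficient ratio of the form $B/A=-\overline{\tilde\gamma_n}(\bar\lambda_n/\lambda_n)$.

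To pin down $\tilde\gamma_n$, I would evaluate the recurrence at $z=\alpha_{n-1}$: the $\zeta_{n-1}$-factor vanishes and only the $v$-term contributes, producing the ratio $\phi_n(\alpha_{n-1})/\phi_n^*(\alpha_{n-1})=-\overline{\tilde\gamma_n}$, in agreement with the definition in \eqref{lambdan}. Imposing $\|\phi_n\|_\mu=1$ fixes the modulus of the prefactor as $\sqrt{(1-|\alpha_n|^2)/(1-|\alpha_{n-1}|^2)}\big/\sqrt{1-|\tilde\gamma_n|^2}$, using $\|\phi_{n-1}\|_\mu=\|\phi_{n-1}^*\|_\mu=1$, the pointwise identity $|\phi_{n-1}|=|\phi_{n-1}^*|$ on $\TT$, and elementary $1-|\alpha_k|^2$-bookkeeping for the integrals $\int_\TT |(1-\bar\alpha_{n-1}z)/(1-\bar\alpha_n z)|^2\,d\mu$. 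The unimodular phase $\lambda_n$ from \eqref{etan1} is then set by the normalization convention $\kappa_n=\overline{\phi_n^*(\alpha_n)}$. Finally, the second row of $T_n$ governing $\phi_n^*$ follows by applying the parahermitian involution $g\mapsto g^*=\cB_n g_*$ to the formula just obtained for $\phi_n$, using that this involution is an isometry on $\cL_n\subset L^2(\mu)$.

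The chief obstacle is the meticulous tracking of the unimodular phases $\eta_n,\lambda_n$ needed for consistency with the convention $\kappa_n=\overline{\phi_n^*(\alpha_n)}$, whereas in the classical polynomial setting ($\alpha_k\equiv 0$) a positive real leading coefficient could simply be chosen. The factor $\eta_n$ arises naturally from evaluating $(1-\bar\alpha_{n-1}z)/(1-\bar\alpha_n z)$ at $z=\alpha_{n-1}$, while $\lambda_n$ absorbs the residual phase coming from $\kappa_{n-1}$ and $\phi_n^*(\alpha_{n-1})$; any miscount of these phases produces a recurrence that is correct only up to unimodular constants and breaks the prescribed normalization.
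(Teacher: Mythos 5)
The paper does not supply a proof of this theorem: Section~\ref{s3} opens by saying results are borrowed from \cite{Bultheel1999,Bultheel2006} and ``the proofs are generically omitted,'' and the statement is attributed verbatim to Theorem 4.1.1 of \cite{Bultheel1999}. So there is no in-paper argument to compare against, and your plan has to be judged on its own merits. The skeleton you describe --- introduce $u=\frac{1-\bar\alpha_{n-1}z}{1-\bar\alpha_n z}\zeta_{n-1}\phi_{n-1}$ and $v=\frac{1-\bar\alpha_{n-1}z}{1-\bar\alpha_n z}\phi_{n-1}^*$ in $\cL_n$, show $\phi_n\in\mathrm{span}(u,v)$ via Christoffel--Darboux, read off $\tilde\gamma_n$ by evaluating at $z=\alpha_{n-1}$ where $u$ vanishes, and obtain the $\phi_n^*$ row via the involution $g\mapsto\cB_ng_*$ --- is the standard route and is sound in outline; in particular the last step works exactly because the scalar prefactor turns out to be real and positive, so it is fixed under conjugation.

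The step that would stall as written is the normalization. You claim that imposing $\|\phi_n\|_\mu=1$ pins $|c_n|$ down through ``elementary $1-|\alpha_k|^2$-bookkeeping for the integrals $\int_\TT|(1-\bar\alpha_{n-1}z)/(1-\bar\alpha_n z)|^2\,d\mu$.'' Expanding $\|\phi_n\|_\mu^2$ from the recurrence produces $\int_\TT|\tfrac{1-\bar\alpha_{n-1}z}{1-\bar\alpha_n z}|^2|\phi_{n-1}|^2\,d\mu$ and a cross term $\int_\TT|\tfrac{1-\bar\alpha_{n-1}z}{1-\bar\alpha_n z}|^2\zeta_{n-1}\phi_{n-1}\overline{\phi_{n-1}^*}\,d\mu$, and neither of these is $\mu$-free: the functions $\tfrac{1-\bar\alpha_{n-1}z}{1-\bar\alpha_n z}\phi_{n-1}$ and $\tfrac{1-\bar\alpha_{n-1}z}{1-\bar\alpha_n z}\phi_{n-1}^*$ live in $\cL_n$ but not in $\cL_{n-1}$, so orthogonality of $\phi_{n-1}$ to $\cL_{n-2}$ does not collapse them, and there is no direct ``bookkeeping'' that eliminates $\mu$. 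The constant actually comes from the Christoffel--Darboux relation itself, evaluated at $z=w=\alpha_{n-1}$: using $k_n(\alpha_{n-1},\alpha_{n-1})\bigl(1-|\zeta_n(\alpha_{n-1})|^2\bigr)=|\phi_n^*(\alpha_{n-1})|^2-|\zeta_n(\alpha_{n-1})|^2|\phi_n(\alpha_{n-1})|^2$, together with $k_n=k_{n-1}+\phi_n\overline{\phi_n}$, $k_{n-1}(\alpha_{n-1},\alpha_{n-1})=|\kappa_{n-1}|^2$ and $|\phi_n(\alpha_{n-1})|=|\tilde\gamma_n|\,|\phi_n^*(\alpha_{n-1})|$, one gets $|\phi_n^*(\alpha_{n-1})|^2(1-|\tilde\gamma_n|^2)=|\kappa_{n-1}|^2\bigl(1-|\zeta_n(\alpha_{n-1})|^2\bigr)$; the elementary identity $1-|\zeta_n(\alpha_{n-1})|^2=\dfrac{(1-|\alpha_n|^2)(1-|\alpha_{n-1}|^2)}{|1-\bar\alpha_n\alpha_{n-1}|^2}$ and the value $\phi_n^*(\alpha_{n-1})=c_n\dfrac{1-|\alpha_{n-1}|^2}{1-\bar\alpha_n\alpha_{n-1}}\bar\lambda_n\bar\kappa_{n-1}$ read off from the recurrence then force $|c_n|^2=\dfrac{1-|\alpha_n|^2}{(1-|\alpha_{n-1}|^2)(1-|\tilde\gamma_n|^2)}$. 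In short, the Christoffel--Darboux machinery you invoke already carries the normalization; a separate ``integrate against $d\mu$'' step does not. A smaller inaccuracy: evaluating $(1-\bar\alpha_{n-1}z)/(1-\bar\alpha_n z)$ at $z=\alpha_{n-1}$ gives $(1-|\alpha_{n-1}|^2)/(1-\bar\alpha_n\alpha_{n-1})$, whose unimodular part is $|1-\bar\alpha_n\alpha_{n-1}|/(1-\bar\alpha_n\alpha_{n-1})$, a square root of $\eta_n$ rather than $\eta_n$ itself.
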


\begin{definition}\label{d02}
We call $\tilde{\gamma}_{n}$, given by \eqref{lambdan},  
the $n$-th {\it Geronimus parameter} of the measure $\mu$ 
(with respect to the sequence $(\alpha_k)$).
\end{definition}
Corollary 3.1.4 from \cite{Bultheel1999} says that for $z \in\mathbb{D},\ n \ge 1$
\begin{equation}\label{e001}
\phi_n^*(z) \neq 0,\quad \ |\phi_n(z)/\phi_n^*(z)|<1,
\end{equation}
and, consequently, $\tilde{\gamma}_{n}$ is 
well-defined and that $|\tilde\gamma_n|<1$.

We will normalize $\phi_n$ by setting $\lambda_n=1$, see \eqref{etan1}.
{\it Thus from now on, $\phi_n$ is the orthogonal rational function of degree
$n$ satisfying:}
\begin{equation}
\lambda_n=\frac{1- \alpha_n \bar\alpha_{n-1}}{|1-\alpha_n \bar \alpha_{n-1}|}
        \frac{\overline{\phi_{n}^*(\alpha_{n-1})}}{|\phi_{n}^*(\alpha_{n-1})|}   \frac{\overline{\kappa_{n-1}}}{|\kappa_{n-1}|}=1.
\label{normalisation phin}
\end{equation}
This normalization is from Langer-Lasarow \cite{Langer}. 
It differs from the one made in Bultheel et al. \cite{Bultheel1999},  
that corresponds to $\kappa_n = \ovl{\phi_n^*(\alpha_n) }>0$.
However, in the classical case, $\alpha_n\equiv 0$, it is easily checked by induction, that it  also matches the normalization $k_n>0$ made in (\ref{e041}). 

Relations \eqref{e001} mean that the roots of $\phi_n$ lie 
in $\overline{\mathbb{D}}$. Theorem \ref{t0001} implies that
the roots of the orthogonal rational functions $\phi_n$ are, in fact, in $\mathbb{D}$.  
Another useful fact is that the OR-functions $(\phi_k)_{0\le k\le n},$ are orthonormal in $L^2\left( \frac{P(.,\alpha_n)}{|\phi_n|^{2}} dm \right)$, see \cite[Theorem 6.1.9]{Bultheel1999}.

We already saw a definition of ORFs of the second kind (see the discussion 
following Definition \ref{d01}). Presently, the OR-functions of the second kind  
will be introduced by an explicit formula:
 \begin{equation}\label{e1}
 \left\{
\begin{array} {l}
 \psi_0=1, \\
 \psi_n(z) = \displaystyle{\int_{\mathbb{T}} \dfrac{t+z}{t-z} \left( \phi_n (t) - \phi_n (z) \right) d\mu(t)}.
\end{array}
\right. 
\end{equation}
Both definitions turn out to be equivalent (see Theorem \ref{Th Geronimus} or \cite[Theorem 6.2.5]{Bultheel1999}), but the 
one above is better suited for computations.  The next result wraps Lemmas 4.2.2 and 4.2.3 from  \cite{Bultheel1999}, whose proof is a direct computation using the orthogonality of $(\phi_n)$.

\begin{lemma}
Let $n \ge 1$ and the function $g$ be so that $g_* \in \mathcal{L}_{n-1}$. Then  
\[
 \psi_n(z) g(z) = \int_{\mathbb{T}}{\frac{t+z}{t-z} \left(\phi_n(t) g(t) - \phi_n(z) g(z) \right) d\mu(t)} .
\]
Similarly, for $h$ such that  $h_* \in \zeta_n \mathcal{L}_{n-1}$, we have
\begin{equation}
\label{seck2}
 -\psi_n^*(z) h(z) = \int_{\mathbb{T}}{\frac{t+z}{t-z} \left(\phi_n^*(t) h(t) - \phi_n^*(z) h(z) \right) d\mu(t)}.
\end{equation}
\label{psin_f}
\end{lemma}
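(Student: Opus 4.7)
The strategy for both identities is parallel: reduce each claim to a single vanishing integral and invoke orthogonality of $\phi_n$ to $\mathcal{L}_{n-1}$ in $L^2(\mu)$.

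For the first identity, I would subtract $g(z)\psi_n(z)$ (expressed via the integral definition \eqref{e1}) from the right-hand side. Everything telescopes except for
\[
\int_{\TT}\frac{t+z}{t-z}\,\phi_n(t)\bigl(g(t)-g(z)\bigr)\,d\mu(t),
\]
and it suffices to show this vanishes. Since $g_*\in\mathcal{L}_{n-1}$, the function $g$ is rational in $t$ with poles at most at $\alpha_1,\dots,\alpha_{n-1}$ and numerator degree $\le n-1$; dividing by $t-z$ drops the numerator degree by one and multiplying by $t+z$ restores it, so $F_z(t):=(t+z)(g(t)-g(z))/(t-z)$ takes the form $\widetilde Q(t)/\prod_{k=1}^{n-1}(t-\alpha_k)$ with $\widetilde Q\in\mathcal{P}_{n-1}$. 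A direct application of the parahermitian conjugation in $t$ turns the denominator into $\pi_{n-1}(t)$ (up to a power of $t$), giving $(F_z)_*\in\mathcal{L}_{n-1}$. The orthogonality $\phi_n\perp\mathcal{L}_{n-1}$, written on $\TT$ as $\int \phi_n k_*\,d\mu=0$ for every $k\in\mathcal{L}_{n-1}$, then yields $\int\phi_n F_z\,d\mu=0$.

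For the second identity the approach is analogous but starts by deriving a usable integral formula for $\psi_n^*$. Applying $z\mapsto 1/\bar z$ in \eqref{e1} and conjugating, with the help of $\overline{(t+1/\bar z)/(t-1/\bar z)}=-(t+z)/(t-z)$ for $t\in\TT$, gives
\[
(\psi_n)_*(z)=-\int_{\TT}\frac{t+z}{t-z}\bigl((\phi_n)_*(t)-(\phi_n)_*(z)\bigr)\,d\mu(t);
\]
multiplying by $\mathcal{B}_n(z)$ and using $\mathcal{B}_n(z)(\phi_n)_*(z)=\phi_n^*(z)$ yields
\[
\psi_n^*(z)=-\int_{\TT}\frac{t+z}{t-z}\bigl(\mathcal{B}_n(z)(\phi_n)_*(t)-\phi_n^*(z)\bigr)\,d\mu(t).
\]
Substituting this into the target identity and using $\phi_n^*(t)=\mathcal{B}_n(t)(\phi_n)_*(t)$ on $\TT$, the expression collapses after cancellations to
\[
\int_{\TT}\frac{t+z}{t-z}\,(\phi_n)_*(t)\bigl(G(t)-G(z)\bigr)\,d\mu(t)=0,\qquad G:=\mathcal{B}_n h.
\]
The crucial algebraic observation is that $h_*\in\zeta_n\mathcal{L}_{n-1}$ means $h_*=\zeta_n\widetilde h$ with $\widetilde h\in\mathcal{L}_{n-1}$, so $h=\widetilde h_*/\zeta_n$; since $\mathcal{B}_n/\zeta_n=\mathcal{B}_{n-1}$, a short explicit computation gives $G=\mathcal{B}_{n-1}\widetilde h_*\in\mathcal{L}_{n-1}$. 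The degree count from the first part then shows that $(t+z)(G(t)-G(z))/(t-z)$ belongs to $\mathcal{L}_{n-1}$ as a function of $t$, and since $(\phi_n)_*=\overline{\phi_n}$ on $\TT$, the conjugated orthogonality $\int\overline{\phi_n}\,k\,d\mu=0$ for $k\in\mathcal{L}_{n-1}$ closes the argument.

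The main obstacle lies in the second identity: namely, deriving the auxiliary integral formula for $\psi_n^*$ and spotting that the hypothesis $h_*\in\zeta_n\mathcal{L}_{n-1}$ is exactly what makes $\mathcal{B}_n h$ land back in $\mathcal{L}_{n-1}$, thereby reducing the problem to the same degree-count and orthogonality argument used in the first identity.
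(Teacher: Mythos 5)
Your overall strategy for both halves---reduce the claim to the vanishing of a single integral $\int_\TT \overline{\phi_n}\,k\,d\mu$ with $k\in\cL_{n-1}$ and then invoke orthogonality---is sound, and the degree counts are carried out correctly: for the first identity $(F_z)_*\in\cL_{n-1}$, and for the second $G=\cB_nh=\cB_{n-1}\widetilde h_*\in\cL_{n-1}$ together with $(t+z)(G(t)-G(z))/(t-z)\in\cL_{n-1}$ in $t$. The paper itself does not print a proof, citing only a ``direct computation using the orthogonality of $(\phi_n)$'' following Bultheel et al.; your reduction is precisely in that spirit.

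The one place where you move faster than is warranted is the derivation of the auxiliary formula for $(\psi_n)_*$. You substitute $z\mapsto1/\bar z$ into \eqref{e1} and conjugate, but \eqref{e1} as stated defines $\psi_n(z)$ for $z\in\DD$, whereas $1/\bar z$ lies outside $\overline\DD$. One must first justify that the same integral represents $\psi_n$ there; this is not automatic, since the two Herglotz pieces $\int\frac{t+z}{t-z}\phi_n(t)\,d\mu(t)$ and $\phi_n(z)\int\frac{t+z}{t-z}\,d\mu(t)$ each have jumps across $\TT$. The point that saves the argument, and which you should state, is that the combined integrand
\[
\frac{t+z}{t-z}\bigl(\phi_n(t)-\phi_n(z)\bigr)=(t+z)\,\frac{\phi_n(t)-\phi_n(z)}{t-z}
\]
has a \emph{removable} singularity at $t=z$, so the integral defines a function analytic in $z$ wherever $\phi_n$ is, in particular across $\TT$; since it coincides with the rational function $\psi_n$ on $\DD$, it equals $\psi_n$ on all of $\overline\CC\setminus\{1/\bar\alpha_1,\dots,1/\bar\alpha_n\}$ by uniqueness of analytic continuation. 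With this remark supplied, the substitution and the subsequent conjugation are legitimate, and the rest of your argument closes the proof.
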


Recall the Herglotz transform $F_\mu$ of a measure $\mu$ defined in 
\eqref{e06}. Plugging
$h=(\cB_n)_*$ in \eqref{seck2} and using that $\phi_n$ is 
$\mu$-orthogonal to constants, we obtain  at once 

\begin{proposition}[{\cite{Bultheel2006}, Theorem 3.4}]
\label{F_psi_phi}
Let $\phi_n$ be the ORF of the first kind and $\psi_n$ be as in
\eqref{e1}. Then
\begin{equation}
\label{divFmu}
 F_\mu(z)=\frac{\psi_n^*(z)}{\phi_n^*(z)}+ \frac{z \mathcal{B}_n(z) u_n(z) }{\phi_n^*(z)},
\end{equation}
where $u_n$ is a analytic function in $\DD$ given by
\begin{equation}
\label{Hermitebisc}
u_n(z)=2\int_{\TT}(\phi_n)_*(t)\,\frac{d\mu(t)}{t-z},
\ \ \ \ z\in\DD.
\end{equation}
\end{proposition}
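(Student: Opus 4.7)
The plan is to apply the second identity in Lemma \ref{psin_f} (equation \eqref{seck2}) with a carefully chosen test function $h$, and then rearrange using the orthogonality relation $\phi_n\perp 1$ in $L^2(\mu)$.

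Specifically, I would take $h=(\cB_n)_*$, so that $h_*=\cB_n=\zeta_n\cB_{n-1}\in\zeta_n\cL_{n-1}$, which is exactly the hypothesis of \eqref{seck2}. Since $\phi_n^*=\cB_n(\phi_n)_*$ by the very definition of the star operation on $\cL_n$, and $(\cB_n)_*=\cB_n^{-1}$, we get $\phi_n^*(t)h(t)=(\phi_n)_*(t)$ and likewise for $z$ in place of $t$. Substituting into \eqref{seck2} yields
\[
-\psi_n^*(z)(\cB_n)_*(z)=\int_\TT\frac{t+z}{t-z}\bigl((\phi_n)_*(t)-(\phi_n)_*(z)\bigr)\,d\mu(t).
\]

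Next I would split $(t+z)/(t-z)=1+2z/(t-z)$. This turns the right-hand side into
\[
\int_\TT(\phi_n)_*(t)\,d\mu(t)+2z\int_\TT\frac{(\phi_n)_*(t)}{t-z}\,d\mu(t)-(\phi_n)_*(z)\,F_\mu(z),
\]
after recognizing the factor $\int_\TT(t+z)/(t-z)\,d\mu$ as $F_\mu(z)$. The first integral vanishes: indeed, $(\phi_n)_*=\overline{\phi_n}$ on $\TT$, and $\phi_n$ is $\mu$-orthogonal to the constant $1\in\cL_0\subset\cL_{n-1}$ (since $n\geq 1$), so $\int_\TT(\phi_n)_*\,d\mu=\overline{\int_\TT\phi_n\,d\mu}=0$. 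This is precisely where the orthogonality of $\phi_n$ enters.

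Solving for $F_\mu(z)$ and multiplying through by $\cB_n(z)/\phi_n^*(z)=1/(\phi_n)_*(z)$ gives
\[
F_\mu(z)=\frac{\psi_n^*(z)}{\cB_n(z)(\phi_n)_*(z)}+\frac{2z}{(\phi_n)_*(z)}\int_\TT\frac{(\phi_n)_*(t)}{t-z}\,d\mu(t),
\]
and using $\cB_n(\phi_n)_*=\phi_n^*$ once more in both denominators yields exactly \eqref{divFmu} with $u_n$ as in \eqref{Hermitebisc}. There is no real obstacle here beyond a careful bookkeeping of the star and parahermitian operations; the only substantive ingredient is the orthogonality $\int_\TT(\phi_n)_*\,d\mu=0$, and everything else is algebraic manipulation based on $\phi_n^*=\cB_n(\phi_n)_*$ and the split of the Herglotz kernel.
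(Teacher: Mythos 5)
Your proof is correct and follows exactly the paper's own route: the paper says ``plugging $h=(\cB_n)_*$ in \eqref{seck2} and using that $\phi_n$ is $\mu$-orthogonal to constants, we obtain at once'' \eqref{divFmu}, which is precisely your substitution and your use of $\int_\TT(\phi_n)_*\,d\mu=0$. You have merely spelled out the algebraic bookkeeping (the split $(t+z)/(t-z)=1+2z/(t-z)$, the identities $\phi_n^*=\cB_n(\phi_n)_*$ and $(\cB_n)_*=\cB_n^{-1}$) that the paper leaves to the reader.
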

In particular, $\psi_n^*/\phi_n^*$ interpolates
$F_\mu$ at $0$ and at the $\alpha_k$ for $1\leq k\leq n$.

The theorem to come is \cite[Theorem 4.2.4]{Bultheel1999},
with a different normalization.

\begin{theorem}
The ORFs $(\phi_n)$ and the $(\psi_n)$ from \eqref{e1} together satisfy:
\begin{equation}\label{e3}
\left[
        \begin{array}{cc}
        \phi_n & \psi_n\\
        \phi_n^* & -\psi_n^*
        \end{array}
\right]
= 
\frac{\sqrt{1-|\alpha_n|^2}}{1-\bar\alpha_n z}\frac{1}{\Pi_n}
\left(
\prod_{k=n}^{1} 
\left[
        \begin{array}{cc}
        1 & -\overline{\tilde{\gamma_k}}\\      
        -\tilde{\gamma_k} & 1
        \end{array}
\right]
\left[
        \begin{array}{cc} 
        \zeta_{k-1}(z)&0\\
        0&1
        \end{array}
\right]
\right)
\left[
        \begin{array}{cc}
        1 & 1\\
        1 & -1
        \end{array}
\right],
\end{equation}
where $\Pi_n=\prod_{k=n}^{k=1} \sqrt{1-|\tilde{\gamma_k}|^2}$. In particular, $\psi_n$ is in $\mathcal{L}_n$.
\label{recurrence_phin_psin}
\end{theorem}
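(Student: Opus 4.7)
The plan is induction on $n$. Factor the right-hand side of \eqref{e3} as the iterated transfer-matrix product
\[
T_n(z)\,T_{n-1}(z)\cdots T_1(z)\,E_0,\qquad E_0:=\begin{bmatrix} 1 & 1 \\ 1 & -1 \end{bmatrix},
\]
with $T_k(z)$ the matrix from \eqref{ttilden}, normalized by $\lambda_k=1$ according to \eqref{normalisation phin}. The two columns of $E_0$ encode the initial vectors $(\phi_0,\phi_0^*)^T=(1,1)^T$ and $(\psi_0,-\psi_0^*)^T=(1,-1)^T$; the second one follows from \eqref{e1} combined with the fact that $\mu$ is a probability measure. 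The scalar prefactor $\sqrt{1-|\alpha_n|^2}\,(1-\bar\alpha_n z)^{-1}\Pi_n^{-1}$ emerges by telescoping: using $\alpha_0=0$, one has $\prod_{k=1}^n\sqrt{(1-|\alpha_k|^2)/(1-|\alpha_{k-1}|^2)}=\sqrt{1-|\alpha_n|^2}$, $\prod_{k=1}^n (1-\bar\alpha_{k-1}z)/(1-\bar\alpha_k z)=1/(1-\bar\alpha_n z)$, and $\prod_{k=1}^n (1-|\tilde\gamma_k|^2)^{-1/2}=1/\Pi_n$. With this identification, the first column of \eqref{e3} is exactly Theorem \ref{recu ORF first kind} iterated $n$ times.

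The content of the theorem is thus to show that the second column of the matrix product equals $(\psi_n,-\psi_n^*)^T$, i.e., that $(\psi_n,-\psi_n^*)^T=T_n(z)(\psi_{n-1},-\psi_{n-1}^*)^T$. Two routes are natural. The direct one is to substitute the top-row recursion $\phi_n=A_n\phi_{n-1}+B_n\phi_{n-1}^*$ (with $A_n,B_n$ the entries of the top row of $T_n$) into the integral definition \eqref{e1} of $\psi_n$, split the resulting integrand into an $A_n$-piece and a $B_n$-piece, and apply the two forms of Lemma \ref{psin_f} --- the first to produce $A_n(z)\psi_{n-1}(z)$, the second to produce $-B_n(z)\psi_{n-1}^*(z)$ --- so that the top row of the recursion for $(\psi_n,-\psi_n^*)$ is obtained; the bottom row then follows upon applying the parahermitian involution $g\mapsto g_*$ and exploiting the $J$-paraunitarity on $\TT$ of $T_n$, with $J=\mathrm{antidiag}(1,1)$. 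The alternative, cleaner route is via uniqueness: define $\tilde\psi_n$ by iterating $T_n$ on $(1,-1)^T$, so that $\tilde\psi_n\in\cL_n$ by construction, and verify using Proposition \ref{F_psi_phi} that $\tilde\psi_n^*/\phi_n^*$ interpolates $F_\mu$ at $0$ and at $\alpha_1,\ldots,\alpha_n$ with the right multiplicities, thereby identifying $\tilde\psi_n$ with $\psi_n$ by the unicity of the associated rational Hermite-interpolation problem.

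The main obstacle lies in this second-column identity. In the direct route one must verify that the entries of $T_n$ meet the admissibility hypotheses of Lemma \ref{psin_f}, namely $(A_n)_*\in\cL_{n-2}$ and $(B_n)_*\in\zeta_{n-1}\cL_{n-2}$; this requires carefully regrouping the pole/zero structure carried by the prefactor $(1-\bar\alpha_{n-1}z)/(1-\bar\alpha_n z)$ with the factor $\zeta_{n-1}$, a manipulation enabled precisely by the choice \eqref{normalisation phin} of $\lambda_n=1$. In the uniqueness route, the effort goes into propagating the interpolation identity \eqref{divFmu} through the transfer matrix. Once either verification is complete, the membership $\psi_n\in\cL_n$ is automatic from the product formula, and the theorem is established; the book-keeping can be carried out along the lines of \cite[Ch.~4]{Bultheel1999}, adapted to our normalization.
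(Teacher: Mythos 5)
Your reading of the statement is correct: the right-hand side of \eqref{e3} factors as $T_n(z)\cdots T_1(z)E_0$ with $\lambda_k=1$, the scalar prefactor telescopes as you say (using $\alpha_0=0$), the first column is Theorem~\ref{recu ORF first kind} iterated, and the real work is the second-column identity. Note, however, that the paper does not prove this theorem at all --- it is taken over from \cite[Theorem~4.2.4]{Bultheel1999} with a change of normalization --- so you are supplying a proof where the paper gives none.

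Your ``direct route'' contains a genuine gap. With $\lambda_n=1$ the top-row entries of $T_n$ are $A_n(z)=c_n(z-\alpha_{n-1})/(1-\bar\alpha_nz)$ and $B_n(z)=-c_n\overline{\tilde\gamma_n}(1-\bar\alpha_{n-1}z)/(1-\bar\alpha_nz)$ with $c_n>0$, so $(A_n)_*(z)=c_n(1-\bar\alpha_{n-1}z)/(z-\alpha_n)$ has a pole at $\alpha_n\in\DD$, and likewise for $(B_n)_*$. Every member of $\mathcal{L}_{n-2}$ (resp. $\zeta_{n-1}\mathcal{L}_{n-2}$) has its poles at the reflected points $1/\bar\alpha_k\notin\overline{\DD}$, so $(A_n)_*\notin\mathcal{L}_{n-2}$ and $(B_n)_*\notin\zeta_{n-1}\mathcal{L}_{n-2}$: the admissibility hypotheses of Lemma~\ref{psin_f} fail outright. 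No ``regrouping of the pole/zero structure'' can repair this --- the pole of $T_n$ at $1/\bar\alpha_n$ is forced by $\phi_n\in\mathcal{L}_n$ --- and in particular the normalization $\lambda_n=1$ is a unimodular phase, so it cannot alter pole locations; your claim that this manipulation is ``enabled precisely by the choice $\lambda_n=1$'' is incorrect. To salvage a direct argument one would have to clear the factor $(1-\bar\alpha_nz)^{-1}$ first and then run the orthogonality argument term by term (essentially reproving Lemma~\ref{psin_f} in the needed form), which is quite different from the split you describe.

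Your ``uniqueness route'' is the sound one, and it is also the one closest in spirit to \cite{Bultheel1999}, but you leave the two nontrivial steps unverified: one must first check $\psi_n\in\mathcal{L}_n$ independently of \eqref{e3} (this does follow cheaply from \eqref{e1}, since $(\phi_n(t)-\phi_n(z))/(t-z)$ is a divided difference of the form $q(t,z)/(\pi_n(t)\pi_n(z))$ with $q$ a polynomial of bidegree $(n-1,n-1)$, so integrating in $t$ produces an element of $\mathcal{L}_n$), and then one must actually verify that $\tilde\psi_n^*/\phi_n^*$ interpolates $F_\mu$ at $0,\alpha_1,\dots,\alpha_n$, after which equality in $\mathcal{L}_n$ follows from $n+1$ coincidences. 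As written, the proposal is a plan rather than a proof, and its more detailed branch rests on a false admissibility claim.
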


By taking determinants in \eqref{e3}, we get for $z\in\DD$
\[
\phi_n(z) \psi_n^*(z) + \phi_n^*(z) \psi_n(z) = 2 \frac{1-|\alpha_n|^2}{(1-\overline{\alpha_n} z)(z-\alpha_n)} z \mathcal{B}_n(z),
\]
and, consequently, for $z\in\TT$,
\begin{equation}\label{ORF&Poisson}
  \phi_n(z) \psi_n^*(z) + \phi_n^*(z) \psi_n(z) = 2 \mathcal{B}_n(z) P(z,\alpha_n).
\end{equation}
 
\subsection{Geronimus theorem}\label{ss41}
The Geronimus-type theorem below is central for the whole construction. It seems first stated in \cite{Langer}, but it is implicitly contained in \cite[Sect. 6.4]{Bultheel1999}. 
\begin{theorem}
Let $f\in\cS$ and $\mu$ be the measure associated to $f$ by \eqref{e06}.
Then, for $k\ge0$,
\[
 \tilde{\gamma}_{k+1}=\gamma_k,
\]
where $(\tilde{\gamma_k})$ are the Geronimus parameters defined in \eqref{lambdan}
and $\left(\gamma_k\right)$  the Schur parameters defined in \eqref{e02}.
\label{Th Geronimus}
\end{theorem}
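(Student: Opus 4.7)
The plan is to introduce provisional quantities $\hat\phi_n,\hat\phi_n^*,\hat\psi_n,\hat\psi_n^*$ obtained from \eqref{e3} by substituting $\gamma_{k-1}$ for each $\tilde\gamma_k$ on the right-hand side, and then to show by induction on $n$ that these coincide with the true ORFs $\phi_n,\phi_n^*,\psi_n,\psi_n^*$. Once that is established, a term-by-term comparison of the two recurrences forces $\tilde\gamma_k=\gamma_{k-1}$ for every $k\ge1$, which is the assertion of the theorem.

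The algebraic heart of the argument is the elementary identity
\[
\mathrm{diag}(1,-1)\left[\begin{array}{cc}1 & -\bar a\\ -a & 1\end{array}\right]\mathrm{diag}(1,-1)=\left[\begin{array}{cc}1 & \bar a\\ a & 1\end{array}\right],
\]
which, after inserting $\mathrm{diag}(1,-1)^2=I$ between consecutive factors, telescopes the matrix product on the right-hand side of \eqref{e3} (under $\tilde\gamma_k\mapsto\gamma_{k-1}$) into a rescaling of the Wall product \eqref{recuPQ} at index $n-1$. Reading off entries yields the explicit formulas
\[
\hat\phi_n^*=c_n(B_{n-1}-zA_{n-1}),\qquad\hat\psi_n^*=c_n(B_{n-1}+zA_{n-1}),
\]
with $c_n=\sqrt{1-|\alpha_n|^2}/[(1-\bar\alpha_n z)\Pi_n]$, and analogous formulas for $\hat\phi_n,\hat\psi_n$ involving $zB_{n-1}^*\mp A_{n-1}^*$. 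The extra factor of $z$ in the $\hat\phi_n,\hat\psi_n$ entries is produced precisely by the $\zeta_0=z$ appearing in the ORF recurrence, which is absent from the Wall recurrence at the bottom index.

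Using Theorem \ref{relation between f and An,Bn} (at index $n-1$) to express $F_\mu=(1+zf)/(1-zf)$ as a M\"obius transform in $f_n$ and substituting the identifications above, a direct cross-multiplication and a small simplification using $\zeta_n\mathcal B_{n-1}=\mathcal B_n$ yield
\[
F_\mu\,\hat\phi_n^*-\hat\psi_n^* \;=\; z\mathcal B_n\,\hat u_n,
\]
where $\hat u_n$ is analytic on $\mathbb D$; analyticity follows because $A_{n-1}/B_{n-1}$ interpolates $f$ at $\alpha_1,\ldots,\alpha_n$, which cancels the apparent pole at these points. This is precisely the identity \eqref{divFmu} of Proposition \ref{F_psi_phi}. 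To conclude $\hat\phi_n^*=\phi_n^*$ and $\hat\psi_n^*=\psi_n^*$, I verify that $\hat\phi_n\in\mathcal L_n$ is orthogonal to $\mathcal L_{n-1}$ in $L^2(\mu)$ and has the correct normalization: orthogonality is derived from the interpolation identity above by a residue argument in the spirit of Lemma \ref{psin_f}, reducing $\int\hat\phi_n\bar g\,d\mu$ for $g\in\mathcal L_{n-1}$ to a contour integral that collapses to $0$, while the normalization is inherited from the factor $c_n$ and the induction hypothesis on the phase of $\phi_{n-1}^*(\alpha_{n-2})$.

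The main obstacle is the careful bookkeeping of normalization factors: the sign twist from the $\mathrm{diag}(1,-1)$-conjugation, the rational prefactor $(1-\bar\alpha_{n-1}z)/(1-\bar\alpha_n z)$ and the square-root $\sqrt{(1-|\alpha_n|^2)/(1-|\alpha_{n-1}|^2)}$ in \eqref{ttilden}, and the cumulative $\Pi_n^{-1}$ must combine to produce exactly the outer element of $\mathcal L_n$ whose phase is prescribed by $\lambda_n=1$ in \eqref{normalisation phin}. As a sanity check, the base case $n=1$ can be verified by a direct computation: Gram-Schmidt gives $\phi_1\propto\mathcal B_1-c$ with $c=\int\mathcal B_1\,d\mu$, and the identity $\int d\mu(t)/(1-\bar\alpha_1 t)=1/(1-\bar\alpha_1\overline{f(\alpha_1)})$, which is immediate from the Herglotz representation \eqref{e06}, reduces $\tilde\gamma_1=-\overline{\phi_1(0)/\phi_1^*(0)}$ to $f(\alpha_1)=\gamma_0$ once the normalization $\phi_1^*(0)>0$ is imposed.
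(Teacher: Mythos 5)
Your plan reverses the logical flow of the paper's argument, and that reversal is where the difficulty hides. The paper starts from the orthogonal rational functions $\phi_{n+1},\psi_{n+1}$ driven by $(\tilde\gamma_k)$, performs exactly the $\mathrm{diag}(-1,1)$-conjugation you describe to identify the matrix product in \eqref{e3} with a Wall product whose parameters are $\tilde\gamma_{k+1}$, and obtains $\psi_{n+1}^*/\phi_{n+1}^*=(1+zU_n/V_n)/(1-zU_n/V_n)$. Because the \emph{already known} orthogonality of $\phi_{n+1}$ feeds, through Proposition~\ref{F_psi_phi}, the interpolation $\psi_{n+1}^*/\phi_{n+1}^*|_{\alpha_{j}}=F_\mu(\alpha_{j})$, the function $U_n/V_n=\Omega_z(\psi_{n+1}^*/\phi_{n+1}^*)$ interpolates $f$ at $\alpha_1,\dots,\alpha_{n+1}$, and Proposition~\ref{gamma_values_f} then forces the Schur parameters of $U_n/V_n$ (which are $\tilde\gamma_1,\dots,\tilde\gamma_{n+1}$ by construction) to agree with those of $f$. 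At no point does the paper need to \emph{verify} orthogonality of anything.

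You instead build candidate ORFs $\hat\phi_n$ from the Wall data $(A_{n-1},B_{n-1})$ of $f$ and then need to \emph{prove} that $\hat\phi_n\perp\mathcal L_{n-1}$ in $L^2(\mu)$, together with the normalization. This is the Favard-theorem direction, and it is the essential content of your proposal — yet you dispatch it in one sentence, citing ``a residue argument in the spirit of Lemma~\ref{psin_f}.'' That reference is misleading: Lemma~\ref{psin_f} \emph{derives} the integral identity from orthogonality, not the reverse, so it does not supply the argument you need. What you actually have to do is expand $\hat\phi_n\bar g$ for $g\in\mathcal L_{n-1}$ into partial fractions, express the resulting integrals against $d\mu$ in terms of values of $F_\mu$ at $\alpha_1,\dots,\alpha_n$ (and their conjugates), and show these all cancel using the interpolation $A_{n-1}/B_{n-1}(\alpha_i)=f(\alpha_i)$ from Corollary~\ref{det_A_B_on_T} together with the determinant identity there. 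Your own base-case check for $n=1$ is precisely this computation and it does close; the general-$n$ case will close too, but it is a genuine calculation, not a remark, and it is the step that distinguishes your route from the paper's. The paper's direction is cleaner exactly because it pushes the only orthogonality input through Proposition~\ref{F_psi_phi} rather than re-deriving it. Also, you would be well advised to address the case of repeated $\alpha_k$: the paper first treats the simple case and invokes a limiting argument, whereas the interpolation data in your residue computation become confluent (derivatives of $F_\mu$) and the bookkeeping changes. If you flesh out the orthogonality step along the lines of your $n=1$ computation and deal with confluences, the argument should go through, but as written it has a real hole where the paper has a citation.
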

Thus the Geronimus parameters  and the Schur parameters of of a measure $\mu$
coincide.   It follows from \eqref{e3} that $(\psi_n)$ 
meets the same recurrence relations
as $(\phi_n)$ only with Geronimus parameters $-\tilde{\gamma}_{n}$ 
rather than $\tilde{\gamma}_{n}$. Thus, we see that the definition of
the ORFs of the second kind given in \eqref{e1}
coincides with the one made in the introduction.

\begin{proof}
The idea is to compare the recurrence formulas \eqref{recuPQ} and \eqref{e3}. 
We assume the sequence $(\alpha_k)$ is simple, {\it i.e. } $\alpha_k\not =\alpha_j$ for $k\not=j$. The proof in the general case follows by a limiting argument.
By \eqref{e3},  we have
\begin{eqnarray*}
\lefteqn{
\left[
        \begin{array}{cr}
        \phi_{n+1}(z) & \psi_{n+1}(z)\\
        \phi_{n+1}^*(z)& -\psi_{n+1}^*(z)
        \end{array}
\right]
}\\
&=&
\Delta_{n+1}
\left(
\prod_{k=n+1}^{k=1}
\left[
        \begin{array}{cc}
         -1 &  0 \\
        0 & 1
        \end{array}
\right]
\left[
        \begin{array}{cc}
        1 & \overline{\tilde{\gamma_k}}\\       
        \tilde{\gamma_k} & 1
        \end{array}
\right]
\left[
        \begin{array}{cc} 
        \zeta_{k-1}(z)&0\\
        0&1
        \end{array}
\right]
\left[
        \begin{array}{cc}
         -1 &  0 \\
        0 & 1
        \end{array}
\right]
\right)
\left[
        \begin{array}{cr}
        1 & 1\\
        1& -1
        \end{array}
\right]
\end{eqnarray*}
where
\[
\Delta_{n+1}=
\frac{\sqrt{1-|\alpha_{n+1}|^2}}{1-\bar\alpha_{n+1} z}\frac{1}{\prod_{k=1}^{n+1} \sqrt{1-|\tilde{\gamma_k}|^2}}.
\]
Let now $U_n/V_n$ be the $n$-th convergent of the Schur function with Schur parameters
$\gamma_k :=\tilde{\gamma}_{k+1},\ k \ge 0$.
Proposition \ref{recurrence_An_Bn} provides us with the following expression for $\phi_n$, $\psi_n$: 
\begin{eqnarray*}
&&\left[
        \begin{array}{cr}
        \phi_{n+1}(z) & \psi_{n+1}(z)\\
        \phi_{n+1}^*(z)& -\psi_{n+1}^*(z)
        \end{array}
\right]\\
&&=
\Delta_{n+1}
\left[
        \begin{array}{cc}
         -1 &  0 \\
        0 & 1
        \end{array}
\right]
\left[
        \begin{array}{ll}
        V_n^* & U_n^*\\
        U_n & V_n
        \end{array}
\right]
\left[
        \begin{array}{ll}
        \zeta_0 & 0\\
        0 & 1
        \end{array}
\right]
\left[
        \begin{array}{cc}
         -1 &  0 \\
        0 & 1
        \end{array}
\right]
\left[
        \begin{array}{cr}
        1 & 1\\
        1& -1
        \end{array}
\right]\\
&&= 
\Delta_{n+1}
\left[
        \begin{array}{ll}
        z V_n^* - U_n^* & z V_n^* + U_n^*\\
        -z U_n + V_n & -z U_n - V_n
        \end{array}
\right].
\end{eqnarray*}
Therefore,
\begin{equation}
\label{relation Psi_n A_n0}
\begin{split}
\lefteqn{
\left[
        \begin{array}{cr}
        \phi_{n+1}(z) & \psi_{n+1}(z)\\
        \phi_{n+1}^*(z)& -\psi_{n+1}^*(z)
        \end{array}
\right]
}
\\
& = 
\frac{\sqrt{1-|\alpha_{n+1}|^2}}{1-\bar\alpha_{n+1} z}\frac{1 }{\prod_{k=1}^{n+1} \sqrt{1-|\tilde{\gamma_k}|^2}}
\left[
        \begin{array}{ll}
        z V_n^* - U_n^* & z V_n^* + U_n^*\\
        -z U_n + V_n & -z U_n - V_n
        \end{array}
\right],
\end{split}
\end{equation}
and
\begin{equation}
 \frac{\psi_{n+1}^*}{\phi_{n+1}^*}=\frac{1 + z \frac{U_n}{V_n}}{1 - z \frac{U_n}{V_n}}.
\label{interpol_psi_phi_A_n_B_n0}
\end{equation}
Consequently,
\[
 \frac{U_n(z)}{V_n(z)}=\Omega_z\left(\frac{\psi_{n+1}^*(z)}{\phi_{n+1}^*(z)}\right),
\]
where $\Omega_z(w)=(w-1)/(z(w+1))$.
From Proposition \ref{F_psi_phi}, we get 
\[
F(\alpha_{j+1})=\left({\frac{\psi_{n+1}^*}{\phi_{n+1}^*}}\right)(\alpha_{j+1}).
\]
Recalling that $f(z)=\Omega_z(F(z))$, it follows by Proposition 
\ref{gamma_values_f} that the $n+1$ first Schur parameters of the function 
$U_n/V_n$  and 
of the function $f$ coincide.  
\end{proof}

The theorem shows that the functions $U_n$ and $V_n$ are equal to the
WRFs  $A_n$ and $B_n$ corresponding to $f$. In particular, 
(\ref{relation Psi_n A_n0}) and (\ref{interpol_psi_phi_A_n_B_n0}) imply
\begin{equation}
\label{relation Psi_n A_n}
\begin{split}
\lefteqn{
\left[
        \begin{array}{cr}
        \phi_{n+1}(z) & \psi_{n+1}(z)\\
        \phi_{n+1}^*(z)& -\psi_{n+1}^*(z)
        \end{array}
\right]
}
\\
& = 
\frac{\sqrt{1-|\alpha_{n+1}|^2}}{1-\bar\alpha_{n+1} z}\frac{1 }{\prod_{k=1}^{n+1} \sqrt{1-|\tilde{\gamma_k}|^2}}
\left[
        \begin{array}{ll}
        z B_n^* - A_n^* & z B_n^* + A_n^*\\
        -z A_n + B_n & -z A_n - B_n
        \end{array}
\right]
\end{split}
\end{equation}
and
\begin{equation}
 \frac{\psi_{n+1}^*}{\phi_{n+1}^*}=\frac{1 + z \frac{A_n}{B_n}}{1 - z \frac{A_n}{B_n}}.
\label{interpol_psi_phi_A_n_B_n}
\end{equation}

\subsection{Consequences of Geronimus theorem}\label{ss34}
Arguing as in  \cite[Corollary 5.2]{Khrushchev2001}, we readily see that the Schur function $A_n/B_n$ corresponds to the measure $\frac{P(.,\alpha_{n+1})} {|\phi_{n+1}|^{2}} dm$. 

The next theorem provides one with  a helpful relation between the density $\mu'$ of the absolutely continuous part of $\mu$,  the Schur remainders $(f_n)$, and the  ORFs $(\phi_n)$. It is a counterpart to Theorem 2 from  \cite{Khrushchev2001}, see also \cite{NjaVel}. Since it is heavily used in the sequel, we give the proof.

\begin{theorem}
Let $(\phi_n)$ and $(f_n)$ be the ORFs and Schur remainders associated 
to $\mu$ and $f$,   respectively. Then it holds  a.e. on $\TT$ that
\[
\mu' = \frac{1-|f_n|^2}{|1- \zeta_n \frac{\phi_n}{\phi_n^*} f_n|^2} \frac{P(.,\alpha_n)}{|\phi_n|^2}. 
\]
\label{Psi_n sigma'}
\end{theorem}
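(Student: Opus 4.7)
My plan is to unfold the right-hand side using the Nevanlinna-type parametrization of $f$ given by Theorem~\ref{relation between f and An,Bn} (shifted one index down), together with the explicit formula \eqref{relation Psi_n A_n} expressing the ORFs through the WRFs, and then to reduce everything to the identity $\mu'=(1-|f|^2)/|1-zf|^2$ from \eqref{fFF}.

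First, I rewrite Theorem \ref{relation between f and An,Bn} with $n$ replaced by $n-1$, obtaining
\[
f=\frac{A_{n-1}+\zeta_n B_{n-1}^* f_n}{B_{n-1}+\zeta_n A_{n-1}^* f_n}.
\]
Simultaneously, formula \eqref{relation Psi_n A_n} with index $n-1$ yields
\[
\phi_n=\frac{\sqrt{1-|\alpha_n|^2}}{(1-\bar\alpha_n z)\Pi_n}(zB_{n-1}^*-A_{n-1}^*),\qquad
\phi_n^*=\frac{\sqrt{1-|\alpha_n|^2}}{(1-\bar\alpha_n z)\Pi_n}(B_{n-1}-zA_{n-1}),
\]
with $\Pi_n=\prod_{k=1}^n\sqrt{1-|\tilde\gamma_k|^2}$. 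These two identities let me compute $1-zf$ by a direct algebraic manipulation: the numerator factors as $(B_{n-1}-zA_{n-1})-\zeta_n f_n(zB_{n-1}^*-A_{n-1}^*)$, which in view of the display above is exactly $\frac{(1-\bar\alpha_n z)\Pi_n}{\sqrt{1-|\alpha_n|^2}}\,\phi_n^*\bigl(1-\zeta_n(\phi_n/\phi_n^*)f_n\bigr)$.

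Next I compute $1-|f|^2$ on $\TT$. The denominator is $|B_{n-1}+\zeta_n A_{n-1}^* f_n|^2$. Expanding both squares in
\[
|B_{n-1}+\zeta_n A_{n-1}^* f_n|^2-|A_{n-1}+\zeta_n B_{n-1}^* f_n|^2,
\]
the cross terms cancel because on $\TT$ one has $A_{n-1}^*=\cB_{n-1}\overline{A_{n-1}}$ and $B_{n-1}^*=\cB_{n-1}\overline{B_{n-1}}$, so $\overline{B_{n-1}}A_{n-1}^*=\overline{A_{n-1}}B_{n-1}^*$. The remaining diagonal terms give $(|B_{n-1}|^2-|A_{n-1}|^2)(1-|f_n|^2)=\omega_{n-1}(1-|f_n|^2)$ by Corollary \ref{det_A_B_on_T}. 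Assembling the two pieces and using $|\phi_n^*|=|\phi_n|$ on $\TT$:
\[
\mu'=\frac{1-|f|^2}{|1-zf|^2}=\frac{\omega_{n-1}(1-|\alpha_n|^2)/\Pi_n^2}{|1-\bar\alpha_n z|^2\,|\phi_n|^2\,\bigl|1-\zeta_n(\phi_n/\phi_n^*)f_n\bigr|^2}.
\]

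Finally, Geronimus' theorem (Theorem \ref{Th Geronimus}) gives $\tilde\gamma_k=\gamma_{k-1}$, hence
\[
\Pi_n^2=\prod_{k=1}^n(1-|\tilde\gamma_k|^2)=\prod_{k=0}^{n-1}(1-|\gamma_k|^2)=\omega_{n-1},
\]
which collapses the constant factor to $(1-|\alpha_n|^2)/|1-\bar\alpha_n z|^2=P(\,\cdot\,,\alpha_n)$ on $\TT$, yielding the claim. The only delicate point is the vanishing of the cross terms in the numerator of $1-|f|^2$, which hinges on the specific interplay between the $*$-operation on $\cL_{n-1}$ and complex conjugation on $\TT$; everything else is routine algebra together with the previously proved identities.
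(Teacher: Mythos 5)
Your proof is correct and follows essentially the same route as the paper's: express $f$ via the Wall RFs and $f_n$ (the paper works one index up, with $A_n,B_n,f_{n+1}$ and then reindexes), rewrite $B_{n-1}-zA_{n-1}$ and $zB_{n-1}^*-A_{n-1}^*$ in terms of $\phi_n^*,\phi_n$ via \eqref{relation Psi_n A_n}, observe the cancellation of cross terms in $|B_{n-1}+\zeta_n A_{n-1}^* f_n|^2-|A_{n-1}+\zeta_n B_{n-1}^* f_n|^2$ on $\TT$, and invoke Corollary \ref{det_A_B_on_T} together with Geronimus' theorem to collapse the constants (the paper folds $\Pi_{n+1}^2=\omega_n$ directly into the two displayed ORF identities, whereas you spell it out). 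One small slip: your penultimate display for $\mu'$ drops the factor $1-|f_n|^2$ in the numerator — it should read
\[
\mu'=\frac{\omega_{n-1}(1-|f_n|^2)(1-|\alpha_n|^2)/\Pi_n^2}{|1-\bar\alpha_n z|^2\,|\phi_n|^2\,\bigl|1-\zeta_n(\phi_n/\phi_n^*)f_n\bigr|^2};
\]
you clearly had this factor two sentences earlier, so this is a transcription error rather than a gap in the argument.
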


\begin{proof}
From Theorem \ref{relation between f and An,Bn}, we have on $\mathbb{T}$
\begin{eqnarray}
        1-|f|^2 & = & 1-\left|\frac{A_n + \zeta_{n+1} B_n^* f_{n+1}}{B_n +\zeta_{n+1} A_n^* f_{n+1}} \right|^2 \nonumber \\
        & = & \frac{|B_n+\zeta_{n+1}A_n^* f_{n+1}|^2-|A_n+\zeta_{n+1}B_n^* f_{n+1}|^2}{|B_n+\zeta_{n+1}A_n^* f_{n+1}|^2}.
                \label{deuxieme ligne imbouf}
\end{eqnarray}
Notice that $A_n^* \overline{B_n} = \overline{A_n} B_n^*$ on $\mathbb{T}$, so that 
\[
\zeta_{n+1} A_n^* f_{n+1} \overline{B_n} + B_n \overline{\zeta_{n+1} A_n^* f_{n+1}} 
- \overline{A_n} \zeta_{n+1} B_n^* f_{n+1} - A_n \overline{\zeta_{n+1} B_n^* f_{n+1}} = 0.  
\]
Therefore, on expanding (\ref{deuxieme ligne imbouf}) and recalling  Corollary \ref{det_A_B_on_T}, we find that 
\begin{equation*}
1-|f|^2= \frac{(|B_n|^2-|A_n|^2) (1-|f_{n+1}|^2)}{|B_n+\zeta_{n+1}A_n^* f_{n+1}|^2}
=\frac{\omega_n (1-|f_{n+1}|^2)}{|B_n+\zeta_{n+1}A_n^* f_{n+1}|^2},
\end{equation*}
where $\omega_n = \prod_{k=0}^n{(1-|\gamma_k|^2)}$.
 
Again by Theorem \ref{relation between f and An,Bn}, we obtain
\begin{eqnarray*}
|1-z f|^2 & = & \left| 1 - \frac{z A_n + \zeta_{n+1} z B_n^* f_{n+1}}{B_n + \zeta_{n+1} A_n^* f_{n+1}} \right|^2 \\
& = & \left| \frac{B_n - z A_n + \zeta_{n+1} f_{n+1} (A_n^*-z B_n^*) }{B_n + \zeta_{n+1} A_n^* f_{n+1}} \right|^2.
\end{eqnarray*}
On the other hand,  Theorem \ref{Th Geronimus} and (\ref{relation Psi_n A_n}) show
\[
        \left\{
        \begin{array}{ll}
        z B_n^* - A_n^* &=  \frac{1-\bar\alpha_{n+1} z}{\sqrt{1-|\alpha_{n+1}|^2}} {\sqrt{\omega_n}} \phi_{n+1} \\
        B_n - z A_n &= \frac{1-\bar\alpha_{n+1} z}{\sqrt{1-|\alpha_{n+1}|^2}}{\sqrt{\omega_n}} \phi^*_{n+1}
        \end{array}
        \right.
\] 
and therefore
$$
|1-z f|^2= \omega_n \frac{|1-\bar \alpha_{n+1} z|^2}{1-|\alpha_{n+1}|^2} \left| \frac{\phi_{n+1}^* - \zeta_{n+1} f_{n+1}  \phi_{n+1}}{B_n + \zeta_{n+1} A_n^* f_{n+1}} \right|^2.
$$
Recall that $\mu'(\xi)=(1-|f(\xi)|^2)/(|1-\xi f(\xi)|^2)$ a.e. on $\mathbb{T}$. 
Combining all this, we obtain 
\[
\mu' = \frac{1-|f_{n+1}|^2}{|\phi_{n+1}|^2 |1- \zeta_{n+1} \frac{\phi_{n+1}}{\phi_{n+1}^*} f_{n+1}|^2} \frac{1-|\alpha_{n+1}|^2}{|\xi-\alpha_{n+1}|^2}
\] 
which achieves the proof.\end{proof}

\section{Weighted $L^2$-convergence of Schur remainders}\label{s5}
The material reviewed so far is known, and it is meant as a preparation for the forthcoming results which are new.
As we start doing analysis rather than algebra, the assumptions 
\eqref{e04} and \eqref{e08}-\eqref{e083} or \eqref{e08p}, \eqref{e082p},
will start playing a key role.

We begin quoting a lemma which is \cite[Theorem 9.7.1]{Bultheel1999}.
\begin{lemma} \label{l1}Assuming \eqref{e04}, we get in the weak-* convergence of measures
 \[
(*)-\lim_n \frac{P(.,\alpha_n)} {|\phi_n|^{2}} dm = d\mu.
\] 
\end{lemma}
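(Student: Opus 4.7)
My plan is to exploit two facts stated earlier in the paper: first, $(\phi_k)_{0\le k\le n}$ is an orthonormal basis of $\cL_n$ both in $L^2(\mu)$ and in $L^2\!\left(\tfrac{P(\cdot,\alpha_n)}{|\phi_n|^2}dm\right)$ (by the definition of the $\phi_k$ and Bultheel et al.'s Theorem 6.1.9 cited right after Definition \ref{d02}); and second, by \eqref{e04}, the union $\bigcup_n \cL_n$ is dense in the disk algebra $A(\DD)$, a fact recorded in the introduction. I would then observe that weak-$*$ convergence of probability measures on $\TT$ reduces to testing against a dense subfamily of $C(\TT)$.

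The first step is the key algebraic identity: for any $g\in\cL_N$ and any $n\ge N$, write $g=\sum_{k=0}^{n}c_k\phi_k$ in either Hilbert space; since $\phi_0\equiv 1$ and the $(\phi_k)$ are orthonormal with respect to both $\mu$ and $\tfrac{P(\cdot,\alpha_n)}{|\phi_n|^2}dm$, we obtain
\[
\int_\TT g\,d\mu \;=\; c_0 \;=\; \int_\TT g\,\frac{P(\cdot,\alpha_n)}{|\phi_n|^2}dm.
\]
Taking $g=1\in\cL_0\subset\cL_n$ shows in particular that $\tfrac{P(\cdot,\alpha_n)}{|\phi_n|^2}dm$ is a probability measure for every $n$, so the family is uniformly bounded in total variation. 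By complex conjugation, the same identity holds for $\bar h$ with $h\in\cL_N$.

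The second step is density. Since $\bigcup_n\cL_n$ is dense in $A(\DD)$, it approximates in sup-norm on $\TT$ every analytic polynomial; passing to conjugates, $\bigcup_n(\cL_n+\overline{\cL_n})$ approximates every trigonometric polynomial uniformly on $\TT$, hence is dense in $C(\TT)$ by the Stone-Weierstrass theorem. Combined with the uniform mass bound from step one, the identity above extends by a standard $3\varepsilon$-argument to every $\varphi\in C(\TT)$:
\[
\lim_n \int_\TT \varphi\,\frac{P(\cdot,\alpha_n)}{|\phi_n|^2}dm \;=\; \int_\TT \varphi\,d\mu,
\]
which is the required weak-$*$ convergence.

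The only nontrivial ingredient is the double orthogonality of the $(\phi_k)$, which was already invoked from Bultheel et al.; once it is in hand, the argument is essentially functional-analytic. The mild subtlety is that approximation inside $\cL_n$ alone is insufficient to test against arbitrary continuous functions, which is why one must combine the density of $\bigcup_n\cL_n$ in $A(\DD)$ with its conjugate to recover all of $C(\TT)$, and this is exactly where condition \eqref{e04} enters.
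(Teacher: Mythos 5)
Your proof is correct: the paper itself does not prove Lemma \ref{l1} but quotes Theorem 9.7.1 of Bultheel et al., and your argument assembles exactly the two ingredients the paper records for this purpose — the orthonormality of $(\phi_k)_{0\le k\le n}$ in both $L^2(\mu)$ and $L^2\bigl(P(\cdot,\alpha_n)|\phi_n|^{-2}dm\bigr)$ (stated right after Definition \ref{d02}), and the density of $\bigcup_n\cL_n$ in $A(\DD)$ under \eqref{e04} (stated in the introduction). The resulting identity $\int_\TT g\,d\mu=\int_\TT g\,P(\cdot,\alpha_n)|\phi_n|^{-2}\,dm$ for $g\in\cL_n+\overline{\cL_n}$ is in fact invoked explicitly in the proof of Theorem \ref{t001}, so your self-contained derivation is in complete harmony with the paper's viewpoint.
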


The two theorems below address the $L^2$-convergence of Schur 
remainders under different assumptions. Recall  
$Acc\,(\alpha_k)$ is  the set of accumulation points of $(\alpha_k)$.  

\begin{theorem}\label{t001}
Let  \eqref{e04} be in force and $\lim_k|\alpha_k|=1$. Assume that  
\eqref{e08}-\eqref{e083} hold.  Then
\begin{equation}
\label{convL2R}
\lim_{k} \int_\TT{|f_{k}|^2 P(.,\alpha_k) dm}=0.
\end{equation}
If $(\alpha_k)$ accumulates nontangentially on
$Acc(\alpha_k)\cap\TT$, then one can replace
\eqref{e08} and \eqref{e082} with \eqref{e08p}.
\label{lim int f_n^2 cercle}
\end{theorem}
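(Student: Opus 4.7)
The plan is to start from Theorem \ref{Psi_n sigma'}, which yields the pointwise identity on $\TT$
\[
(1-|f_n|^2)\, P(\cdot,\alpha_n) \;=\; \mu'\, |\phi_n|^2\, \bigl|1-\zeta_n (\phi_n/\phi_n^*) f_n\bigr|^2.
\]
Set $\varepsilon_n := \zeta_n (\phi_n/\phi_n^*) f_n$, so that $|\varepsilon_n|=|f_n|$ on $\TT$. Integrating against $dm$ and using $\int_\TT P(\cdot,\alpha_n)\,dm=1$, we obtain
\[
\int_\TT |f_n|^2\, P(\cdot,\alpha_n)\,dm \;=\; 1 \;-\; \int_\TT \mu'\,|\phi_n|^2\, |1-\varepsilon_n|^2\,dm,
\]
so the task reduces to showing that the right-hand integral tends to $1$.

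Expanding $|1-\varepsilon_n|^2 = 1+|f_n|^2 - 2\,\mathrm{Re}\,\varepsilon_n$ on $\TT$ and using $\overline{\phi_n^*}=\phi_n/\cB_n$ there to get $|\phi_n|^2\zeta_n(\phi_n/\phi_n^*)=\phi_n^2/\cB_{n-1}$, we split
\[
\int \mu'|\phi_n|^2|1-\varepsilon_n|^2\,dm = \int \mu'|\phi_n|^2\,dm + \int \mu'|\phi_n|^2|f_n|^2\,dm - 2\,\mathrm{Re}\int \frac{\mu'\phi_n^2 f_n}{\cB_{n-1}}\,dm.
\]
The first summand tends to $1$ by orthonormality $\|\phi_n\|_{L^2(\mu)}^2=1$ together with the auxiliary fact $\int|\phi_n|^2\,d\mu_s\to 0$. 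This auxiliary fact is deduced from Lemma \ref{l1} and \eqref{e083}: since $\supp\mu_s$ lies at positive distance from $Acc(\alpha_k)\cap\TT$, $P(\cdot,\alpha_n)$ vanishes uniformly on $\supp\mu_s$ for large $n$, and the weak-$*$ convergence of $P(\cdot,\alpha_n)/|\phi_n|^2\,dm$ to $d\mu$ is tested against continuous bump functions supported near $\supp\mu_s$ to force $|\phi_n|^2$ to be small there.

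To handle the remaining two terms we exploit the $L^2(\mu)$-orthogonality $\phi_n\perp\cL_{n-1}$. Writing the cross-term as $\int \phi_n\,\overline{g_n}\,d\mu_{ac}$ with $\overline{g_n}=\phi_n f_n/\cB_{n-1}$ on $\TT$, we approximate $g_n$ by elements of $\cL_{n-1}$; this is made quantitative through \eqref{e04}, which ensures density of rational functions with poles at $(1/\bar\alpha_k)$ in the relevant Hardy-type spaces, combined with \eqref{e08}--\eqref{e082} (continuity and positivity of $\mu'$ near $Acc(\alpha_k)\cap\TT$) to bound the $L^2(\mu')$-norm of the approximation error uniformly in $n$. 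The Erd\H{o}s-type assumption $|f|<1$ a.e.\ on $\TT$ then matches the cross-term to $\int\mu'|\phi_n|^2|f_n|^2\,dm$ up to $o(1)$, delivering the needed cancellation.

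The main obstacle is precisely this cross-term cancellation. The $L^2(\mu)$-orthogonality of $\phi_n$ to $\cL_{n-1}$ does not apply directly because $f_n$ is not rational, so one must produce a quantitative approximation of $f_n$ in a neighborhood of $Acc(\alpha_k)\cap\TT$ and carefully track the interplay between the absolutely continuous and singular parts of $\mu$ (the latter contributions being absorbed by the fact $\int|\phi_n|^2\,d\mu_s\to 0$). For the nontangential case, continuity of $\mu'$ is replaced by the Lebesgue-point conditions in \eqref{e08p}, and the same estimates go through using the standard nontangential boundary behavior of Poisson integrals.
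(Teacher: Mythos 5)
The approach diverges from the paper's at a critical juncture and, as written, has a gap that does not look fixable along the proposed lines. You integrate the identity of Theorem~\ref{Psi_n sigma'} against $dm$, arriving at the need to control the cross--term $\int_\TT \mu'|\phi_n|^2\,\mathrm{Re}\bigl(\zeta_n(\phi_n/\phi_n^*)f_n\bigr)\,dm$. You then try to kill it by exploiting $\phi_n\perp\cL_{n-1}$ in $L^2(\mu)$, after approximating $f_n$ by rational functions. That cannot work: after using $\overline{\phi_n}=\phi_n^*/\cB_n$ on $\TT$, the cross--term equals $\mathrm{Re}\int_\TT \mu' f_n\,\phi_n^2/\cB_{n-1}\,dm$, and the factor multiplying $\phi_n$, namely $\overline{h_n}=\mu' f_n\phi_n/\cB_{n-1}$, involves $\phi_n$ \emph{itself} of degree $n$; no amount of approximating $f_n$ by members of $\cup_m\cL_m$ places $h_n$ in $\cL_{n-1}$ or even close to it, and the degrees grow with $n$ in the wrong direction. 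The paper avoids this entirely by \emph{not} integrating against $dm$: it first rearranges the pointwise identity to
\begin{equation*}
|f_n|^2=\frac{P(\cdot,\alpha_n)-|\phi_n|^2\mu'}{P(\cdot,\alpha_n)+|\phi_n|^2\mu'}\bigl(1-\mathrm{Re}\,\varepsilon_n\bigr)+\mathrm{Re}\,\varepsilon_n,
\end{equation*}
then integrates against $P(\cdot,\alpha_n)\,dm$. Because $\varepsilon_n=\zeta_n(\phi_n/\phi_n^*)f_n$ is a bounded analytic function on $\DD$ vanishing at $\alpha_n$, harmonicity gives $\int_\TT\mathrm{Re}(\varepsilon_n)P(\cdot,\alpha_n)\,dm=0$ \emph{exactly}, and $|1-\mathrm{Re}\,\varepsilon_n|\leq2$ then reduces everything to estimating $\int_\TT|1-g_n|P(\cdot,\alpha_n)\,dm$ with $g_n=2|\phi_n|^2\mu'/(P(\cdot,\alpha_n)+|\phi_n|^2\mu')$, which is handled by elementary inequalities and Cauchy--Schwarz. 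The choice of Poisson weight $P(\cdot,\alpha_n)$ is exactly what produces the cancellation; by working with unweighted $dm$ you give this up.

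A secondary issue: your claim that $\int_\TT|\phi_n|^2\,d\mu_s\to0$ does not follow from Lemma~\ref{l1} and \eqref{e083} by the bump-function argument you sketch. Lemma~\ref{l1} gives weak-$*$ convergence of $P(\cdot,\alpha_n)|\phi_n|^{-2}\,dm$ to $d\mu$, which controls averages of $|\phi_n|^{-2}$ near $\supp\mu_s$, not $\int|\phi_n|^2\,d\mu_s$. Under the (quite weak) hypotheses of the theorem, which only constrain $\mu$ near $Acc(\alpha_k)\cap\TT$, this would in any case require a genuinely new argument. The paper's proof sidesteps it: the decisive inequality $\int_\TT\sqrt{\mu'}\,P(\cdot,\alpha_n)\,dm\leq\bigl(\int_\TT g_n P(\cdot,\alpha_n)\,dm\bigr)^{1/2}\bigl(\int_\TT P(\cdot,\alpha_n)\,d\mu\bigr)^{1/2}$ only requires $\int_\TT P(\cdot,\alpha_n)\,d\mu\to\mu'(\alpha)$, which follows directly from \eqref{e08}--\eqref{e083} (continuity of $\mu'$ and absence of $\mu_s$ near the limit point $\alpha$); no statement of the form $\int|\phi_n|^2\,d\mu_s\to0$ is ever needed.
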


\begin{proof}
It is enough to prove \eqref{convL2R} for any subsequence $(\alpha_{n_k})$, 
converging to $\alpha\in Acc\, (\alpha_k)$. For simplicity, the subsequence is 
still denoted by $(\alpha_k)$.

By Theorem \ref{Psi_n sigma'},  we get
\[
 |\phi_n|^2 \mu' (1+|f_n|^2-2 Re(\zeta_n \frac{\phi_n}{\phi_n^*} f_n)) = (1-|f_n|^2) P(.,\alpha_n)
\]
and, consequently,
\[
 |f_n|^2=\frac{P(.,\alpha_n)-|\phi_n|^2 \mu'}{P(.,\alpha_n)+|\phi_n|^2 \mu'} + \frac{2 |\phi_n|^2 \mu'Re(\zeta_n \frac{\phi_n}{\phi_n^*} f_n)}{P(.,\alpha_n)+|\phi_n|^2 \mu'}.
\]
Hence, we obtain
\begin{equation*}
|f_{n}|^2= \frac{P(.,\alpha_n)-|\phi_n|^2 \mu'}{P(.,\alpha_n)+|\phi_n|^2 \mu'} - \frac{P(.,\alpha_n)-|\phi_n|^2 \mu'}{P(.,\alpha_n)+|\phi_n|^2 \mu'} Re\left( \zeta_n \frac{\phi_n}{\phi_n^*} f_n\right) + Re\left( \zeta_n \frac{\phi_n}{\phi_n^*} f_n\right).  
\end{equation*}
Since $ \zeta_n (\alpha_n) =0$, we get by harmonicity
\[
\int_\TT
{Re\left( \zeta_n \frac{\phi_n}{\phi_n^*} f_n\right) P(.,\alpha_n) dm} = 0,
\]
and
\[
\int_\TT
{|f_{n}|^2 P(.,\alpha_n) dm} = \int_\TT
{\frac{P(.,\alpha_n)-|\phi_n|^2 \mu'}{P(.,\alpha_n)+|\phi_n|^2 \mu'} \left(1 - Re\left( \zeta_n \frac{\phi_n}{\phi_n^*} f_n\right)\right) P(.,\alpha_n) dm}.
\] 
Obviously,
\[
 \left|1 - Re\left( \zeta_n \frac{\phi_n}{\phi_n^*} f_n\right) \right|\le 2
\]
and we get
\begin{equation}
\int_\TT{|f_{n}|^2 P(.,\alpha_n) dm} \le 2 \int_\TT{\left|1 - \frac{2|\phi_n|^2 \mu'}{P(.,\alpha_n)+|\phi_n|^2 \mu'}\right|  P(.,\alpha_n) dm}.
\label{relation int fn gn}
\end{equation} 
Let 
\begin{equation}
\label{defgn}
g_n=\frac{2|\phi_n|^2 \mu'}{P(.,\alpha_n)+|\phi_n|^2 \mu'}.
\end{equation}
Using that $4 x^2/(1+x)^2\le x$ for $ x \ge 0$,
we deduce
\begin{eqnarray*}
   \int_{\mathbb{T}} g_n^2 P(.,\alpha_n)dm & = &  \int_{\mathbb{T}} \frac{4(|\phi_n|^2 \mu'P(.,\alpha_n)^{-1})^2} {(1+|\phi_n|^2 \mu'P(.,\alpha_n)^{-1})^2} P(.,\alpha_n) dm\\
   & \le & \int_{\mathbb{T}} |\phi_n|^2 \mu' P(.,\alpha_n)^{-1} P(.,\alpha_n) dm\\
   & = &\int_{\mathbb{T}} |\phi_n|^2 \mu' dm\le \int_{\mathbb{T}} |\phi_n|^2 d\mu = 1.
 \end{eqnarray*}
Therefore, by the Schwarz inequality, it follows that 
 \begin{equation}
        \label{int g_n}
        \int_{\mathbb{T}} g_n P(.,\alpha_n) dm \le \left( \int_{\mathbb{T}} g_n^2  P(.,\alpha_n) dm \right)^{1/2} \le 1.
\end{equation}
Furthermore, again by the Schwarz inequality,
\begin{eqnarray*}
        \int_{\mathbb{T}} \sqrt{\mu^{\prime}} P(.,\alpha_n) dm & = & \int_{\mathbb{T}} \frac{\sqrt{2}|\phi_n|           \sqrt{\mu^{\prime}} \sqrt{P(.,\alpha_n)}} {\sqrt{P(.,\alpha_n) + |\phi_n|^2 \mu^{\prime}}} \frac{\sqrt{P(.,\alpha_n) + |\phi_n|^2 \mu^{\prime}} \sqrt{P(.,\alpha_n)}} {\sqrt{2}|\phi_n|} dm\\
        & \le & \left( \int_{\mathbb{T}} g_n P(.,\alpha_n) dm \right)^{1/2} \left(\frac{1}{2} \int_{\mathbb{T}} \left(\frac{P(.,\alpha_n)}{|\phi_n|^2} + \mu^{\prime}  \right) P(.,\alpha_n) dm \right)^{1/2}.
\end{eqnarray*}
Recall that  the ORFs $(\phi_k)_{0\le k\le n}$ are orthonormal in $L^2\left( \frac{P(.,\alpha_n)}{|\phi_n|^{2}} dm \right)$ and, consequently,
$$
\int_\TT f \, \frac{P(.,\alpha_n)}{|\phi_n|^{2}} dm=\int_\TT f\, d\mu
$$
for $f\in \mathcal{L}_n + \overline{\mathcal{L}_n}$. Obviously, 
$P(z,\alpha_n) = z/(z-\alpha_n) + \bar{\alpha_n} z /(1-\bar{\alpha_n} z)$ 
lies in the latter space and 
\begin{equation}
 \int_{\mathbb{T}} P(.,\alpha_n) \frac{P(.,\alpha_n)}{|\phi_n|^2} dm = \int_{\mathbb{T}} P(.,\alpha_n) d\mu.
\label{int P P phi2 dm equal int P dmu}
\end{equation}
Using  \eqref{int P P phi2 dm equal int P dmu},
we arrive at
\begin{equation}
 \int_{\mathbb{T}} \sqrt{\mu^{\prime}} P(.,\alpha_n) dm \le \left( \int_{\mathbb{T}} g_n P(.,\alpha_n) dm \right)^{1/2} \left( \int_{\mathbb{T}} P(.,\alpha_n) d\mu \right)^{1/2}.
\label{int sigma prime cercle}
\end{equation}
Recall now that $(\alpha_n)$ converges to $\alpha \in \mathbb{T}$. By hypothesis, $\mu'$ is continuous at $\alpha$ and there is no singular 
component $\mu_s$ in a neighborhood of this point. 
Thus, passing to the inferior limit in 
(\ref{int sigma prime cercle}), we obtain
\[
 \sqrt{\mu^{\prime}(\alpha)} \le \sqrt{\mu^{\prime}(\alpha)} \liminf_{n} \left( \int_{\mathbb{T}} g_n P(.,\alpha_n) dm \right)^{1/2}.
\]
Moreover, by Fatou's theorem, the same conclusion holds 
if we assume \eqref{e08p} instead of \eqref{e08}-\eqref{e082}
provided that $(\alpha_k)$ accumulates nontangentially on
$Acc(\alpha_k)\cap\TT$.
Therefore, since $\mu^\prime(\alpha)>0$,  
\[
 \liminf_{n} \int_{\mathbb{T}} g_n P(.,\alpha_n) dm \ge 1.
\]
Combining this inequality with (\ref{int g_n}), we see that
 \begin{equation}
\label{combine}
 \lim_n \int_{\mathbb{T}} g_n P(.,\alpha_n) dm = \lim_n \int_{\mathbb{T}} g_n^2  P(.,\alpha_n) dm = 1,
\end{equation}
and subsequently that
\begin{eqnarray*}
\lim_n \int_{\mathbb{T}} (1-g_n)^2 P(.,\alpha_n) dm&=&\int_{\mathbb{T}} P(.,\alpha_n) dm - 2 \lim_n \int_{\mathbb{T}} g_n  P(.,\alpha_n) dm\\
&+&\lim_n \int_{\mathbb{T}} g_n^2  P(.,\alpha_n) dm = 0.
\end{eqnarray*}
With the Schwarz inequality and (\ref{relation int fn gn}), we 
finish the proof of the first part of the theorem.
\end{proof}

\begin{remark}\label{rk31} As a partial converse, \eqref{convL2R}  implies that $|f|<1$ a.e. 
on $Acc(\alpha_k)\cap\TT$.
\end{remark}
Indeed, observe if $|f|=1$ a.e. on $E\subset Acc(\alpha_k)\cap\TT, |E|>0$, that $|f_n|=1$ a.e. on $E$ by Theorem \ref{Psi_n sigma'}. The Lebesgue's theorem says that the set of density points of $E$ coincides with $E$ up to a set of Lebesgue measure zero. Comparing the Poisson kernel and the box kernel, we see that the integral 
in \eqref{convL2R} cannot go to zero if we pick for $\alpha$ a 
density point of $E$.

A similar convergence holds when the $(\alpha_n)$ are compactly included 
in $\mathbb{D}$. The statement below may seem strange, since the 
Poisson kernel $P(.,\alpha_n)$ is bounded from above and below and therefore
superfluous. However, it is convenient to prove the theorem in this form to have it team up with Theorem \ref{t001} in order to produce Corollary \ref{c03}.

\begin{theorem}\label{t002}
Let the sequence $(\alpha_k)$ be compactly included in $\mathbb{D}$. Then,  $|f| < 1$ a.e. on $\mathbb{T}$ if and only if
\begin{equation}
\label{convrem}
\lim_{n} \int_\TT{|f_{n}|^2 P(.,\alpha_n) dm}=0.
\end{equation}
\label{lim int f_n^2 compact}
\end{theorem}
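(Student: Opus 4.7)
The strategy is to prove the two implications separately, reusing as much of the proof of Theorem~\ref{t001} as possible.

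The converse direction is straightforward. Suppose toward contradiction that $|f|=1$ on some measurable set $E\subset\TT$ with $|E|>0$. Then \eqref{fFF} gives $\mu'=0$ a.e.\ on $E$, and Theorem~\ref{Psi_n sigma'} forces $1-|f_n|^2=0$ a.e.\ on $E$ for every $n$, i.e.\ $|f_n|=1$ a.e.\ on $E$. Since $(\alpha_n)$ is compactly included in $\DD$, there exists $c_K>0$ with $P(\cdot,\alpha_n)\ge c_K$ on $\TT$ for all $n$, and therefore
\[
\int_\TT|f_n|^2 P(\cdot,\alpha_n)\,dm \;\ge\; c_K|E| \;>\; 0
\]
for every $n$, contradicting the hypothesis.

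For the direct implication, I would follow the template of the proof of Theorem~\ref{t001} up to the inequality
\[
\int_\TT|f_n|^2 P(\cdot,\alpha_n)\,dm \;\le\; 2\Bigl(\int_\TT(1-g_n)^2 P(\cdot,\alpha_n)\,dm\Bigr)^{1/2},
\]
with $g_n=2|\phi_n|^2\mu'/(P(\cdot,\alpha_n)+|\phi_n|^2\mu')$, together with the bounds $\int g_n P(\cdot,\alpha_n)\,dm \le 1$ and $\int g_n^2 P(\cdot,\alpha_n)\,dm \le 1$. By Cauchy--Schwarz it then suffices to establish $\liminf_n\int g_n P(\cdot,\alpha_n)\,dm\ge 1$: combined with the second moment bound this forces both moments of $g_n$ against $P(\cdot,\alpha_n)$ to tend to $1$, so that $\int(1-g_n)^2 P(\cdot,\alpha_n)\,dm\to 0$.

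This lower bound is the decisive step where my plan departs from Theorem~\ref{t001}. There it was obtained by passing to the liminf in the Cauchy--Schwarz chain
\[
\int_\TT\sqrt{\mu'}\,P(\cdot,\alpha_n)\,dm \;\le\; \Bigl(\int g_n P(\cdot,\alpha_n)\,dm\Bigr)^{1/2}\Bigl(\int P(\cdot,\alpha_n)\,d\mu\Bigr)^{1/2},
\]
using the concentration of $P(\cdot,\alpha_n)$ at an accumulation point $\alpha\in\TT$ and the continuity of $\mu'$ at $\alpha$. Here $(\alpha_n)$ stays compactly in $\DD$, so along a subsequence $\alpha_{n_k}\to\alpha\in\DD$ one has $P(\cdot,\alpha_{n_k})\to P(\cdot,\alpha)$ uniformly, with no concentration available; the chain above then yields only $(\int\sqrt{\mu'}\,P(\cdot,\alpha)\,dm)^2\ge\int P(\cdot,\alpha)\,d\mu$, which by Cauchy--Schwarz runs in the wrong direction. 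To circumvent this, I would exploit the \emph{global} hypothesis $\mu'>0$ a.e.\ on $\TT$ (not merely near one point) together with Lemma~\ref{l1}. Since $P(\cdot,\alpha_n)$ is uniformly bounded above and below on $\TT$, along the subsequence the lemma upgrades to $(P(\cdot,\alpha)/|\phi_{n_k}|^2)\,dm\to d\mu$ weakly-$*$; combined with $\mu'>0$ a.e., this should force $|\phi_n|^2\mu'/P(\cdot,\alpha_n)\to 1$ in measure, hence $g_n\to 1$ in measure, and the uniform $L^2(P\,dm)$-bound on $g_n$ together with the uniform boundedness of $P$ then give $\int g_n P(\cdot,\alpha_n)\,dm\to 1$. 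The hardest step will be controlling the contribution of the singular part $\mu_s$ of $\mu$, which is not excluded by $|f|<1$ a.e.: one needs $\int|\phi_n|^2\,d\mu_s\to 0$, a consequence of the Erd\H os character of $\mu$ that is likely to demand a Khrushchev-style argument or the ORF asymptotics from Bultheel et al.~\cite{Bultheel1999} valid in the compact regime.
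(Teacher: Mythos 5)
Your converse direction is correct and coincides with the paper's. The direct implication is where you diverge, and the decisive step in your plan does not work as stated.

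You correctly identify that it suffices to show $\liminf_n\int g_n P(\cdot,\alpha_n)\,dm\ge 1$, and you correctly note that along a subsequence $\alpha_{n_k}\to\alpha\in\DD$ the measures $P(\cdot,\alpha)|\phi_{n_k}|^{-2}\,dm$ converge weak-$*$ to $d\mu$. But the inference ``weak-$*$ convergence of the densities plus $\mu'>0$ a.e.\ forces $|\phi_n|^2\mu'/P(\cdot,\alpha_n)\to 1$ in measure'' is false: weak-$*$ convergence of $h_n\,dm$ to $\mu'\,dm$ gives no control on the pointwise oscillation of $h_n$. For instance $h_n=\mu'(1+\sin n\theta)$ converges weak-$*$ to $\mu'\,dm$ (when $\mu'\in L^1$), yet $\mu'/h_n=1/(1+\sin n\theta)$ does not converge in measure. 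Thus $g_n\to 1$ in measure is unjustified, and that was the load-bearing step.

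The paper circumvents this by never attempting to show $g_n\to 1$ in measure. It first establishes, from Lemma~\ref{l1}, the arc inequality $\limsup_n\int_I P(\cdot,\alpha_n)|\phi_n|^{-2}\,dm\le\mu(I)$ for arcs $I$ with $\mu$-null endpoints. Then it extracts a subsequence with $\alpha_{n_k}\to\alpha\in\DD$ and $g_{n_k}\rightharpoonup g$ weak-$*$ in $L^\infty$, and runs the same Cauchy--Schwarz chain as in Theorem~\ref{t001} but \emph{against the normalized indicator of a shrinking arc $I_m$} rather than against $P(\cdot,\alpha_n)$. Passing $n_k\to\infty$ (using the arc inequality and the weak-$*$ limit of $g_{n_k}$) and then $m\to\infty$ at a Lebesgue point $\xi$ of $g$ and $\mu$ yields $\sqrt{\mu'(\xi)}\le\sqrt{g(\xi)}\sqrt{\mu'(\xi)}$, whence $g\ge 1$ a.e.\ because $\mu'>0$ a.e.; hence $g=1$ a.e. The desired $\int g_{n_k}P(\cdot,\alpha_{n_k})\,dm\to 1$ then follows since $P(\cdot,\alpha_{n_k})\to P(\cdot,\alpha)$ uniformly. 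This also disposes of your worry about $\mu_s$ without any extra input: at a.e.\ $\xi$ the ratio $\mu(I_m)/|I_m|$ converges to $\mu'(\xi)$ because the singular part has zero symmetric derivative a.e.; no separate statement such as $\int|\phi_n|^2\,d\mu_s\to 0$, nor any Khrushchev-type argument, is needed. So the gap is concrete: replace the ``convergence in measure'' claim by the weak-$*$ limit $g$ of the $g_n$ together with a Lebesgue-differentiation argument showing $g=1$ a.e.
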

\begin{proof}
The ``if'' part is trivial since $|f_n|=1$ wherever $|f|=1$, so
we focus on the ``only if''.
As a preliminary, notice that if $I$ is an open arc on $\mathbb{T}$ 
such that $\mu$ has no mass at the end-points of $I$, 
it holds that
\begin{equation}
\limsup_n \int_{I} \frac{P(.,\alpha_n)}{ |\phi_n|^{2}} dm \le \mu(I).
\label{cvgce faible I}
\end{equation}
Indeed, in this case, any nested sequence of open arcs $I_m$ decreasing 
to $\overline{I}$ is such that $\lim_m \mu(I_m)=\mu(\overline{I})=\mu(I)$.
Therefore by the Tietze-Urysohn theorem, there is 
to  each $\varepsilon>0$ 
a non-negative function $h_I\in {\mathcal C}(\TT)$ such that 
$h_I=1$ on $\overline{I}$ and 
$\int_{\mathbb{T}} h_I d\mu \le \mu(I) + \varepsilon$.  
Obviously
\[
 \int_{I} \frac{P(.,\alpha_n)}{|\phi_n|^2} dm \le \int_{\mathbb{T}} h_I \frac{P(.,\alpha_n)}{|\phi_n|^2} dm,
\]
and  using Lemma \ref{l1}
\[
 \lim_n \int_{\mathbb{T}} h_I \frac{P(.,\alpha_n)}{|\phi_n|^2} dm = \int_{\mathbb{T}} h_I d\mu \le \mu(I) + \varepsilon.
\] 
Since $\varepsilon$ was arbitrary, this settles the preliminary.
Next, define $g_n$ as in \eqref{defgn}. Arguing as in the previous theorem, 
we see that equation (\ref{int g_n}) still holds. 
Now,  it is enough to show that the conclusion 
of the theorem holds for some infinite subsequence of each
sequence of integers. Thus, by Helly's theorem, we are left to 
establish \eqref{convrem} along a subsequence $n_k$ such that
$\alpha_{n_k}\to \alpha\in Acc(\alpha_k)$, 
$\alpha\in\DD$, and having  the property that $g_{n_k}$ converges  
to $g\in L^\infty(\TT)$ in the $*$-weak sense. Clearly $0\leq g\leq 1$ for the same is true of 
$g_{n_k}$. Pick $\xi\in\TT$ a Lebesgue point of both $g$ and $\mu$, and let
$(I_m)$ be a nested sequence of open arcs decreasing to $\{\xi\}$
such that $\mu$ has no mass at the end-points of any $I_m$. For each $m$,
by the Schwarz inequality,
\begin{eqnarray}
\frac{1}{|I_m|} \int_{I_m} \sqrt{\mu^{\prime}} dm&=&\frac{1}{|I_m|} \int_{I_m} \frac{\sqrt{2}|\phi_{n_k}|                 \sqrt{\mu^{\prime}}} {\sqrt{P(.,\alpha_{n_k}) + |\phi_{n_k}|^2 \mu^{\prime}}} \frac{\sqrt{P(.,\alpha_{n_k}) + |\phi_{n_k}|^2 \mu^{\prime}}} {\sqrt{2}|\phi_{n_k}|} dm \nonumber\\
        & \le & \left( \frac{1}{|I_m|} \int_{I_m} g_{n_k} dm \right)^{1/2} \left( \frac{1}{2|I_m|} \int_{I_m} \left(\frac{P(.,\alpha_{n_k})}{|\phi_{n_k}|^2} + \mu^{\prime}  \right) dm \right)^{1/2}.
        \label{int sigma prime}
\end{eqnarray}
Passing to the limit in (\ref{int sigma prime}) as $n_k\to\infty$
and using 
(\ref{cvgce faible I}), we obtain 
 \[
 \frac{1}{|I_m|} \int_{I_m} \sqrt{\mu^{\prime}} dm \le \left( \frac{1}{|I_m|} \int_{I_m} g dm \right)^{1/2} \left( \frac{1}{2} \frac{\mu(I_m)}{|I_m|} + \frac{1}{2|I_m|} \int_{I_m} \mu^{\prime} dm \right)^{1/2}.
 \]
Letting now $m\to\infty$ yields
 \begin{equation}
\label{supge}
 \sqrt{\mu^{\prime}(\xi)} \le \sqrt{g(\xi)} \left( \frac{1}{2} \mu^{\prime}(\xi) + \frac{1}{2} \mu^{\prime}(\xi)  \right)^{1/2} \le \sqrt{g(\xi)} \sqrt{\mu^{\prime}(\xi)}.
 \end{equation}
By Lebesgue's theorem almost every $\xi\in\mathbb{T}$ satisfies our 
requirements, and from our assumption that $|f|<1$ we have
$\mu^{\prime}>0$, a.e. on $\TT$. Consequently  $g \ge 1$ by \eqref{supge}
hence in fact $g=1$, a.e. on $\mathbb{T}$.
Recalling that $\lim_n P(.,\alpha_n)=P(.,\alpha)$ uniformly on $\mathbb{T}$,  
we obtain \eqref{combine} from (\ref{int g_n}) 
and conclude as in Theorem \ref{lim int f_n^2 cercle}. 
\end{proof}

\begin{corollary}\label{c03}
Let  \eqref{e04}, \eqref{e08}-\eqref{e083}  hold and  
$|f| < 1$ a.e. on $\TT$. Then
\[
\lim_{k} \int_\TT{|f_{k}|^2 P(.,\alpha_k) dm}=0.
\]
When $(\alpha_k)$ accumulates nontangentially on
$Acc(\alpha_k)\cap\TT$, assumptions  \eqref{e08}-\eqref{e082}
may be replaced by  \eqref{e08p}.
\label{cgce L2 disque ferme}
\end{corollary}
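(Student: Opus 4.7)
The plan is to reduce the corollary to Theorems \ref{t001} and \ref{t002} by a subsequence argument. Since a sequence of nonnegative reals converges to $0$ iff every subsequence admits a further subsequence converging to $0$, I would pick any subsequence of $\NN$ and, using compactness of $\ovl\DD$, pass to a further subsequence $(n_k)$ along which $\alpha_{n_k}\to\alpha$ for some $\alpha\in\ovl\DD$. Then I would split according to whether $\alpha$ lies on $\TT$ or in $\DD$.

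If $\alpha\in\TT$, then $\alpha\in Acc(\alpha_k)\cap\TT$, so hypotheses \eqref{e08}--\eqref{e083} guarantee that $\mu'$ is continuous and strictly positive at $\alpha$ while $\mu_s$ puts no mass in a neighborhood of $\alpha$. I would then replay the proof of Theorem \ref{t001} verbatim along $(\alpha_{n_k})$: that proof already begins by reducing to a subsequence converging to a point of $Acc(\alpha_k)\cap\TT$, so nothing extra is needed beyond noting $|\alpha_{n_k}|\to 1$ automatically. The nontangential refinement with \eqref{e08p} follows by the same invocation of Fatou's theorem in the bound on $\int \sqrt{\mu'}\,P(.,\alpha_n)\,dm$.

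If instead $\alpha\in\DD$, then $P(.,\alpha_{n_k})$ converges uniformly on $\TT$ to $P(.,\alpha)$, which is bounded above and below. Since $|f|<1$ a.e. on $\TT$ gives $\mu'>0$ a.e., I would rerun the argument of Theorem \ref{t002}: define $g_n$ by \eqref{defgn}, extract by Helly's theorem a weak-$*$ sublimit $g\in L^\infty(\TT)$ of $(g_{n_k})$, and at each common Lebesgue point $\xi$ of $\mu$ and $g$ apply \eqref{cvgce faible I} to nested arcs $I_m\downarrow\{\xi\}$ whose endpoints carry no $\mu$-mass. This yields $\sqrt{\mu'(\xi)}\le\sqrt{g(\xi)\mu'(\xi)}$, whence $g\equiv 1$ a.e., and then the Cauchy--Schwarz bound from \eqref{relation int fn gn} delivers the conclusion along the sub-subsequence.

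In both cases the chosen subsequence contains a sub-sub-sequence along which $\int_\TT |f_{n_k}|^2 P(.,\alpha_{n_k})\,dm \to 0$, completing the reduction. The main point to watch is that both component proofs are genuinely localized around a single accumulation point, so restricting them to an arbitrary subsequence requires no new analysis; the only delicate bookkeeping is the double extraction in the interior case, first to isolate $\alpha\in\DD$, then to produce the Helly sublimit of $(g_{n_k})$.
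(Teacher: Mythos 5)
Your proposal is correct and follows essentially the same route as the paper, which simply states that Theorems \ref{t001} and \ref{t002} ``remain valid for subsequences'' and that the corollary would otherwise contradict one of them; you merely unpack that remark via the sub-subsequence criterion (rather than by contradiction, but these are the same argument). The only detail worth noting is that, in the interior case, the ORFs $\phi_{n_k}$ and remainders $f_{n_k}$ still refer to the full nested sequence $(\alpha_1,\dots,\alpha_{n_k})$, not just the extracted tail, so one must check that the inputs to Theorem \ref{t002}'s proof --- in particular Lemma \ref{l1} and the bound \eqref{int g_n} --- hold along the subsequence; they do, since they are proved for the full sequence, and your Helly/Lebesgue-point argument then goes through verbatim.
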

\begin{proof} It is readily checked that Theorems 
\ref{lim int f_n^2 cercle} and \ref{lim int f_n^2 compact} remain 
valid for subsequences. If the corollary did not hold, it
would contradict one of them.
\end{proof}

A closer look at the proof of Theorem \ref{lim int f_n^2 cercle} 
shows that assumption  \eqref{e083}
is not really necessary. If $\alpha\in Acc(\alpha_k)\cap\TT $ and 
$\lim_k\alpha_k=\alpha$, all we need is 
$$
\lim_{k} \int_\TT P(.,\alpha_k)\, d\mu_s=0.
$$
For instance if $\mu_s$ is a Dirac mass at $\alpha$ and the $\alpha_k$ 
converge tangentially to $\alpha$, this could still hold.

\section{Convergence of  Wall rational functions ${A_n}/{B_n}$}\label{s6}
We now discuss different kinds of convergence for the WRFs. 
This is essentially an interpretation of the results in the previous section,
except that we appeal at some point to Proposition \ref{bornephin} and
Theorem \ref{c04}. The reader will easily convince himself that
there is no loophole, {\it i.e.} 
that these do not use any result of the present section. 

\subsection{Convergence on compact subsets and w.r.t. pseudohyperbolic distance} 
\label{ss61}
Let us begin with an old result that goes back to \cite{Wall}.
\begin{theorem}
Let \eqref{e04} hold. Then $A_n/B_n$ converges to $f$ uniformly on compact subsets of $\mathbb{D}$.
\end{theorem}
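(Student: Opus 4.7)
The plan is to prove this by a normal family argument combined with the uniqueness property implied by \eqref{e04}. First, recall that the Wall ratios $A_n/B_n$ are Schur functions (by the Proposition preceding Theorem \ref{relation between f and An,Bn}), so the sequence $(A_n/B_n)$ is uniformly bounded by $1$ on $\DD$, hence a normal family by Montel's theorem. Pick any subsequence $(A_{n_k}/B_{n_k})$ converging locally uniformly to some analytic $g$ on $\DD$; by uniform convergence and the bound $|A_n/B_n|\le 1$, we have $g\in\cS$.

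Second, I would exploit the interpolation property established in the Proposition following \eqref{Rn}: the rational function $R_n=A_n/B_n$ interpolates $f$ at $\alpha_1,\dots,\alpha_{n+1}$ counting multiplicities. Fix any index $j\ge 1$; for all $n_k\ge j-1$, the function $A_{n_k}/B_{n_k}$ matches $f$ at $\alpha_j$ up to the multiplicity with which $\alpha_j$ appears in $(\alpha_l)_{1\le l\le n_k+1}$. Since local uniform convergence on $\DD$ gives uniform convergence of all derivatives on compact subsets (Cauchy estimates), passing to the limit yields $g(\alpha_j)=f(\alpha_j)$ together with equality of the corresponding derivatives. Consequently, $h:=f-g\in H^\infty(\DD)$ vanishes on the full sequence $(\alpha_k)_{k\ge 1}$, with at least the required multiplicities.

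Finally, by \eqref{e04} the sequence $(\alpha_k)$ is \emph{not} a Blaschke sequence, hence it is a uniqueness set for $H^\infty(\DD)$ (this is the equivalence recalled right after \eqref{e04}). Were $h$ not identically zero, its zero set (counted with multiplicity) would have to satisfy Blaschke's condition, a contradiction. Therefore $h\equiv 0$, i.e.\ $g=f$. Since every convergent subsequence of the normal family $(A_n/B_n)$ thus has the same limit $f$, the full sequence converges to $f$ uniformly on compact subsets of $\DD$. The only delicate point is checking that multiplicities pass to the limit, which is routine via Cauchy's integral formula, so no substantial obstacle is expected.
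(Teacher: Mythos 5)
Your argument is essentially identical to the paper's: both use normality of the uniformly bounded family $(A_n/B_n)$, the interpolation property $R_n(\alpha_k)=f(\alpha_k)$, and the fact that the negation of the Blaschke condition \eqref{e04} makes $(\alpha_k)$ a uniqueness set for $H^\infty(\DD)$. Your extra attention to multiplicities is a welcome refinement but does not change the route.
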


\begin{proof}
As $(A_n/B_n)$ is a family of Schur functions, it is normal. 
Therefore a subsequence that converges uniformly on compact subsets of 
$\DD$ can be extracted from any subsequence.
Let $g$ be the limit of such a subsequence. 
As $(A_n/B_n)(\alpha_k)=f(\alpha_k)$ for all $\leq n+1$,  $f(\alpha_k) = g(\alpha_k)$ for all $k$.
So, the function $f-g\in H^{\infty}$ vanishes on $(\alpha_k)$ hence it is 
zero by assumption \eqref{e04}. Thus, $f$ is the only limit point.
\end{proof}

Recall that the pseudohyperbolic distance $\rho$ on $\mathbb{D}$ is defined by  
$\rho(z,w) = |z-w|/|1-\bar w z|$ and it is trivially invariant under M\"obius transforms of $\DD$.

\begin{theorem}\label{t01}
Under the assumptions of Corollary \ref{cgce L2 disque ferme}, it holds that
\[
\lim_{n} \int_{\mathbb{T}} \rho\left(f,\frac{A_n}{B_n}\right)^2 P(.,\alpha_{n+1}) dm = 0.
\]
\end{theorem}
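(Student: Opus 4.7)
The plan is to show the pointwise identity $\rho(f,A_n/B_n)=|f_{n+1}|$ on $\TT$, after which the conclusion follows immediately by applying Corollary \ref{cgce L2 disque ferme} with $k=n+1$ (since $|\zeta_{n+1}|=|\mathcal{B}_n|=1$ on $\TT$, nothing else is needed to pass from one index to the other).

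To establish the pointwise identity, I would start from the Nevanlinna-type representation of Theorem \ref{relation between f and An,Bn}:
\[
f=\frac{A_n+\zeta_{n+1}B_n^* f_{n+1}}{B_n+\zeta_{n+1}A_n^* f_{n+1}},
\]
and compute separately the numerator and denominator of
\[
\rho\!\left(f,\frac{A_n}{B_n}\right)=\frac{|f-A_n/B_n|}{|1-\overline{A_n/B_n}\,f|}=\frac{|fB_n-A_n|}{|\overline{B_n}-\overline{A_n}\,f|}.
\]
The numerator reduces algebraically to
\[
fB_n-A_n=\frac{\zeta_{n+1}f_{n+1}(B_n^*B_n-A_n^*A_n)}{B_n+\zeta_{n+1}A_n^* f_{n+1}}
=\frac{\zeta_{n+1}\mathcal{B}_n\omega_n f_{n+1}}{B_n+\zeta_{n+1}A_n^* f_{n+1}},
\]
using the first identity of Corollary \ref{det_A_B_on_T}. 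For the denominator, the crucial observation on $\TT$ is that $A_n^*\overline{B_n}=\overline{A_n}B_n^*$ (already exploited in the proof of Theorem \ref{Psi_n sigma'}), which makes the cross terms cancel and yields
\[
\overline{B_n}-\overline{A_n}\,f=\frac{|B_n|^2-|A_n|^2}{B_n+\zeta_{n+1}A_n^* f_{n+1}}=\frac{\omega_n}{B_n+\zeta_{n+1}A_n^* f_{n+1}},
\]
by the second identity of Corollary \ref{det_A_B_on_T}.

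Dividing one expression by the other, the common denominator and the factor $\omega_n>0$ drop out, leaving $\rho(f,A_n/B_n)=|\zeta_{n+1}\mathcal{B}_n f_{n+1}|=|f_{n+1}|$ on $\TT$. Hence
\[
\int_{\TT}\rho\!\left(f,\frac{A_n}{B_n}\right)^{2}P(.,\alpha_{n+1})\,dm=\int_{\TT}|f_{n+1}|^{2}P(.,\alpha_{n+1})\,dm,
\]
and Corollary \ref{cgce L2 disque ferme} gives the conclusion. There is no real obstacle here: once the Nevanlinna parametrization is combined with the determinantal identities of Corollary \ref{det_A_B_on_T}, the pseudohyperbolic distance collapses exactly onto the Schur remainder, and the quantitative content of the theorem is entirely absorbed in the earlier weighted $L^2$-convergence of $(f_n)$.
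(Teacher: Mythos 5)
Your proof is correct, and it reaches the same pivotal identity $\rho(f,A_n/B_n)=|f_{n+1}|$ on $\TT$ as the paper, but by a genuinely different route. The paper's argument is conceptual and three lines long: it observes that each step $\tau_k(\cdot,z)$ of the Schur algorithm, for fixed $z\in\TT$, is a M\"obius automorphism of $\DD$ (since $|\zeta_{k+1}(z)|=1$ there), hence an isometry for $\rho$; combining this with \eqref{f tau} and \eqref{Rn}, the pseudohyperbolic distance is pulled all the way back along the composition $\tau_0\circ\cdots\circ\tau_n$ to give $\rho(f,A_n/B_n)=\rho(f_{n+1},0)=|f_{n+1}|$. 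Your argument instead verifies the identity by direct algebra starting from the Nevanlinna formula of Theorem \ref{relation between f and An,Bn}, using both parts of Corollary \ref{det_A_B_on_T} and the boundary identity $A_n^*\overline{B_n}=\overline{A_n}B_n^*$; this is in effect a re-run of the opening computation in the proof of Theorem \ref{Psi_n sigma'}, repurposed to evaluate $\rho$ rather than $\mu'$. What the paper's route buys is brevity and a structural explanation (the Schur steps are hyperbolic isometries on the boundary); what your route buys is an explicit verification that does not require recognizing the $\tau_k$ as automorphisms and makes the cancellation mechanism visible. Both are valid, and after establishing the pointwise identity, both finish identically by invoking Corollary \ref{cgce L2 disque ferme}.
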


\begin{proof}
The invariance of the pseudohyperbolic distance under M\"obius transforms and  relations (\ref{f tau}) and (\ref{Rn}) show that
\[
\rho\left(f,\frac{A_n}{B_n}\right) = \rho\left(\tau_0 \circ \dots \circ \tau_{n} (f_{n+1}),\tau_0 \circ \dots \circ \tau_{n} (0)\right) = \rho(f_{n+1},0) = |f_{n+1}|.
\]
Corollary \ref{cgce L2 disque ferme} finishes the proof.
\end{proof}

\subsection{Convergence w.r.t. the hyperbolic metric} 
\label{ss62}
In the disk, the hyperbolic  metric is defined by 
\begin{equation}\label{e07}
 \mathfrak{P}(z,\omega) = \log \left( \frac{1+\rho(z,\omega)}{1-\rho(z,\omega)}\right).
\end{equation}

Here is an analogue of  the ``only if" part of  Theorem 2.6 from \cite{Khrushchev2001}.  
\begin{theorem}\label{cvg hyperbolic}
Let \eqref{e04}, \eqref{e08}-\eqref{e083} be in force, 
and $\mu\in\sz$. Then
\[
 \lim_{n} \int_{\mathbb{T}} \mathfrak{P}\left(f,\frac{A_n}{B_n} \right)^2 P(.,\alpha_{n+1}) dm = 0.
\]
If $(\alpha_k)$ accumulates nontangentially on
$Acc(\alpha_k)\cap\TT$, then it is enough to assume instead of
\eqref{e08} and \eqref{e082} that \eqref{e082p} holds.
\end{theorem}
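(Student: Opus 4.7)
The plan is to reduce the hyperbolic $L^2$-statement to an $L^2$-statement on $\log(1-|f_{n+1}|^2)$, and then to decompose the latter via Theorem~\ref{Psi_n sigma'} into pieces controlled by the Szeg\H{o}-type asymptotic (Theorem~\ref{c04}) and the pointwise bounds on $(\phi_n)$ supplied by Proposition~\ref{bornephin}. By Theorem~\ref{t01}, $\rho(f,A_n/B_n)=|f_{n+1}|$ a.e.\ on $\TT$, whence
\[
\mathfrak{P}\Bigl(f,\tfrac{A_n}{B_n}\Bigr)=\log\frac{1+|f_{n+1}|}{1-|f_{n+1}|}=2\log(1+|f_{n+1}|)-\log(1-|f_{n+1}|^2).
\]
Combining $(a+b)^2\le 2a^2+2b^2$ with $\log(1+x)\le x$ yields
$\mathfrak{P}(f,A_n/B_n)^2\le 8|f_{n+1}|^2+2[\log(1-|f_{n+1}|^2)]^2$.
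Corollary~\ref{c03} sends the first summand, integrated against $P(.,\alpha_{n+1})dm$, to $0$, so it suffices to prove
\[
\int_\TT[\log(1-|f_{n+1}|^2)]^2\,P(.,\alpha_{n+1})\,dm\longrightarrow 0.
\]

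Next, rewriting the identity of Theorem~\ref{Psi_n sigma'} as $1-|f_{n+1}|^2=|h_n|^2\,E_n$ with $E_n:=\mu'|\phi_{n+1}|^2/P(.,\alpha_{n+1})$ and $h_n:=1-\zeta_{n+1}(\phi_{n+1}/\phi_{n+1}^*)f_{n+1}$, and noting that $|\zeta_{n+1}\phi_{n+1}/\phi_{n+1}^*|\equiv 1$ on $\TT$ so that $|h_n-1|=|f_{n+1}|$, we obtain
\[
\log(1-|f_{n+1}|^2)=\log|h_n|^2+\log E_n.
\]
It thus suffices to show that each of $\log|h_n|^2$ and $\log E_n$ tends to $0$ in $L^2(P(.,\alpha_{n+1})dm)$.

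For $\log E_n$, Theorem~\ref{c04} provides $S\phi_{n+1}^*\to G_{n+1}$ in $L^2(\TT,dm)$, where $G_{n+1}(z):=\beta_{n+1}\sqrt{1-|\alpha_{n+1}|^2}/(1-\overline\alpha_{n+1}z)$ satisfies $|G_{n+1}|^2=P(.,\alpha_{n+1})$ on $\TT$. Squaring and applying the $L^2$ triangle inequality gives $\mu'|\phi_{n+1}|^2-P(.,\alpha_{n+1})\to 0$ in $L^1(\TT,dm)$, i.e.\ $E_n\to 1$ in $L^1(P(.,\alpha_{n+1})dm)$. To promote this to $L^2$-convergence of $\log E_n$, I would invoke Proposition~\ref{bornephin}, which under \eqref{e08}--\eqref{e083} supplies uniform pointwise envelopes $0<c\le E_n\le C<\infty$ on subsets of $\TT$ whose $P(.,\alpha_{n+1})$-mass approaches $1$; on such sets the logarithm is a bounded continuous function of $E_n$, so convergence in $P$-measure (extracted from $L^1$-convergence) combined with Lebesgue's dominated convergence theorem yields $\|\log E_n\|_{L^2(P(.,\alpha_{n+1})dm)}\to 0$. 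The companion term $\log|h_n|^2$ is handled as follows: on $\{|f_{n+1}|\le 1/2\}$ the estimate $|\log|h_n|^2|\le C|f_{n+1}|$ follows from $|h_n-1|=|f_{n+1}|$ and a Taylor expansion, and integrates to $0$ by Corollary~\ref{c03}; on the complement, whose $P(.,\alpha_{n+1})$-mass tends to $0$ by Markov's inequality, one uses the identity $\log|h_n|^2=\log(1-|f_{n+1}|^2)-\log E_n$ together with the envelopes just established.

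The principal obstacle is precisely the $L^1\to L^2$ upgrade for $\log E_n$: the logarithm is unbounded where $E_n$ approaches $0$ or $\infty$, so uniform pointwise control of $|\phi_{n+1}|$ on the Poisson-support region is essential. This is exactly the role of Proposition~\ref{bornephin}, whose derivation via $\bar\partial$-estimates and Sobolev embeddings is itself a new contribution of the paper, even in the polynomial case. Under the weaker assumption \eqref{e082p} with nontangential accumulation, the same argument goes through once Fatou's nontangential theorem is invoked to evaluate $\mu'$ and $\log\mu'$ at the accumulation points of $(\alpha_k)$ on $\TT$.
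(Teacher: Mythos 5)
Your approach diverges substantially from the paper's and contains genuine gaps. The paper proceeds from an exact identity rather than an inequality. Writing Theorem~\ref{Psi_n sigma'} as
\[
|\phi_n^*|^2 |S|^2 \frac{|1-\bar\alpha_n\xi|^2}{1-|\alpha_n|^2} = \frac{1-|f_n|^2}{|1-\zeta_n(\phi_n/\phi_n^*)f_n|^2}\quad\text{a.e.\ on }\TT,
\]
one integrates logarithms against $P(.,\alpha_n)\,dm$ and uses two facts: (a) for the Schur function $g:=\zeta_n(\phi_n/\phi_n^*)f_n$, $1-g$ is outer, so $\int_\TT\log|1-g|^2\,P(.,\alpha_n)\,dm=\log|1-g(\alpha_n)|^2=0$ because $\zeta_n(\alpha_n)=0$; (b) $\log|\phi_n^*|$, $\log|S|$ and $\log|1-\bar\alpha_n\xi|$ are harmonic in $\DD$, hence their Poisson integrals at $\alpha_n$ evaluate exactly. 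Together these give
\[
\log\bigl(|\phi_n^*(\alpha_n)|^2|S(\alpha_n)|^2(1-|\alpha_n|^2)\bigr)=\int_\TT\log(1-|f_n|^2)\,P(.,\alpha_n)\,dm,
\]
whose left side tends to $0$ by Theorem~\ref{c04}. The remaining steps are elementary: $|f_n|^2\le-\log(1-|f_n|^2)$ and $\log(1+|f_n|)\le|f_n|$, then Schwarz, then the split $\log(1-|f_n|^2)=\log(1-|f_n|)+\log(1+|f_n|)$.

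Your alternative is to estimate $\log E_n$ pointwise, with $E_n:=\mu'|\phi_{n+1}|^2/P(.,\alpha_{n+1})$, and this is where it breaks. First, Proposition~\ref{bornephin} only gives $\mu'^2|\phi_n|^2\le C+\mu'P(.,\alpha_n)$ on compacts away from $\supp\mu_s$; dividing, $E_n\le 1+C/(\mu'P(.,\alpha_n))$, which is not a uniform upper bound --- it blows up where $P(.,\alpha_n)$ is small --- and the proposition provides no pointwise lower bound on $E_n$ whatsoever, so the envelopes $0<c\le E_n\le C$ you invoke do not exist. Second, your dominated convergence step is taken against the measures $P(.,\alpha_{n+1})\,dm$, which change with $n$ and escape to the boundary when $|\alpha_{n+1}|\to1$; there is no single dominating measure and no fixed notion of ``convergence in $P$-measure'', so the $L^1$-to-$L^2$ upgrade for $\log E_n$ is unjustified. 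Third, your estimate for $\log|h_n|^2$ on $\{|f_{n+1}|>1/2\}$ circles back to $\log E_n$ on exactly the region where you lack control. The key idea you are missing is the harmonicity identity above: it converts the entire Poisson-weighted integral of $\log(1-|f_n|^2)$ into a single scalar controlled directly by the Szeg\H{o} asymptotics, with no pointwise envelopes required.
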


\begin{proof}
We already saw that $\rho(f,A_n/B_n)=|f_{n+1}|$ whence
\begin{equation}
\mathfrak{P}\left(f,\frac{A_n}{B_n} \right)=\log \left( \frac{1+|f_{n+1}|}{1-|f_{n+1}|}\right). 
\label{Poincare f convergent}
\end{equation}
By Theorem \ref{Psi_n sigma'},  
\begin{equation}
|\phi_n^*|^2 |S|^2 \frac{|1-\bar \alpha_n \xi|^2}{1-|\alpha_n|^2}= \frac{1-|f_n|^2}{|1-\zeta_n \frac{\phi_n}{\phi_n^*} f_n|^2}, 
\label{presque Szego}
\end{equation}
a.e. on $\mathbb{T}$.
If $g$ is a Schur function, then $1-g\in H^\infty$  and 
$\mathrm{Re}\, (1-g)>0$, therefore $1-g$ is an outer function 
in $H^\infty(\DD)$ (see \cite{Garnett}, Corollary 4.8). Consequently,
\[
\int_{\mathbb{T}} \log |1-g|^2 P(.,\alpha_n) dm= \log|1-g(\alpha_n)|^2,
\]
and,  putting $g=\zeta_n \frac{\phi_n}{\phi_n^*} f_n$, we get
\[
\int_{\mathbb{T}} \log |1-\zeta_n \frac{\phi_n}{\phi_n^*} f_n|^2 P(.,\alpha_n) dm=0.
\]
Using the previous equality and (\ref{presque Szego}), we see that
\[
 \int_{\mathbb{T}} \log\left(|\phi_n^*|^2 |S|^2 \frac{|1-\bar \alpha_n \xi|^2}{1-|\alpha_n|^2}\right) P(\xi,\alpha_n) dm(\xi) =
\int_{\mathbb{T}} \log(1-|f_n|^2) P(\xi,\alpha_n) dm(\xi).
\]
Since $\log|\phi_n^*|$, $\log|S|$, and $\log|1-\bar \alpha_n \xi|$ are 
harmonic in $\DD$, we continue as
\begin{equation}\label{e7}
 \log \bigl(|\phi_n^*(\alpha_n)|^2 |S(\alpha_n)|^2 (1-|\alpha_n|^2)\bigr) = 
\int_{\mathbb{T}} \log(1-|f_n|^2) P(.,\alpha_n) dm,
\end{equation}
and, by Theorem \ref{c04} to come, we deduce that
\begin{equation}
 \lim_n \int_{\mathbb{T}} \log(1-|f_n|^2) P(.,\alpha_n) dm=0.
\label{lim int log 1-|f_n|^2}
\end{equation}
Since $\log(1+x) \le x$ for $x>-1$, we have
$$ 
0 \le |f_n|^2 \le -\log(1-|f_n|^2), \quad 0 \le \log(1+|f_n|) \le |f_n|. 
\label{e55}
$$ 
Therefore, by the first inequality above and (\ref{lim int log 1-|f_n|^2}),
\[
 \lim_n \int_{\mathbb{T}} |f_n|^2 P(.,\alpha_n) dm=0.
\]
From this, with the help of the second inequality and the Schwarz inequality,
\[
 \lim_n \int_{\mathbb{T}} \log(1+|f_n|) P(.,\alpha_n) dm = 0.
\]
Since $\log(1-|f_n|^2)=\log(1-|f_n|) + \log(1+|f_n|)$, we now see that
\[
 \lim_n \int_{\mathbb{T}} \log(1-|f_n|) P(.,\alpha_n) dm =0.
\]
Referring to (\ref{Poincare f convergent}), we finish the proof. 
\end{proof}

\subsection{Convergence in $L^2(\mathbb{T})$} 
The next theorem follows easily from Corollary 
\ref{cgce L2 disque ferme}. We begin with 
\begin{lemma}
\label{majfmAB}
 For $z \in \mathbb{T}$, we have
\[
\left| f(z) - \frac{A_n}{B_n}(z)\right|= |f_{n+1}(z)| \left| 1 - \frac{A_n}{B_n}(z) \overline{f(z)} \right| \leq 2 |f_{n+1}(z)|.
\]
\end{lemma}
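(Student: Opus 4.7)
The approach is to extract the lemma from the identity $\rho(f,A_n/B_n)=|f_{n+1}|$ that was already established in the proof of Theorem \ref{t01}. Recall that this identity follows from the Möbius invariance of the pseudohyperbolic distance together with the representations $f=\tau_0\circ\cdots\circ\tau_n(f_{n+1})$ and $A_n/B_n=\tau_0\circ\cdots\circ\tau_n(0)$ from \eqref{f tau}--\eqref{Rn}, each $\tau_k$ being a M\"obius automorphism of $\DD$. Since the identity is valid on $\DD$ and both $f$ and $A_n/B_n$ admit nontangential boundary values of modulus at most one, the relation persists $m$-a.e.\ on $\TT$.

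First, I would unfold the definition $\rho(z,w)=|z-w|/|1-\bar w z|$ in this identity to obtain
\[
\left|f(z)-\frac{A_n}{B_n}(z)\right| = |f_{n+1}(z)|\,\left|1-\overline{\frac{A_n}{B_n}(z)}\,f(z)\right|,
\]
which holds for every $z\in\DD$ and, by taking nontangential limits, $m$-a.e.\ on $\TT$. Next, I would note that for any two complex numbers, $|1-\bar a b| = |1 - a \bar b|$ (the two expressions are complex conjugates of one another); applying this with $a=A_n/B_n(z)$ and $b=f(z)$ transforms the right-hand side into $|f_{n+1}(z)|\,|1-(A_n/B_n)(z)\,\overline{f(z)}|$, which is exactly the equality claimed in the lemma.

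For the inequality, I would invoke the fact that both $f$ and $A_n/B_n$ are Schur functions (the latter by the last item of Proposition in Section \ref{s2} asserting that $A_n/B_n\in\cS$), so $|f(z)|\le 1$ and $|(A_n/B_n)(z)|\le 1$ for $z\in\overline{\DD}$. Hence $|(A_n/B_n)(z)\,\overline{f(z)}|\le 1$ on $\TT$, and the triangle inequality gives
\[
\left|1-\frac{A_n}{B_n}(z)\,\overline{f(z)}\right|\le 1 + \left|\frac{A_n}{B_n}(z)\right||f(z)| \le 2,
\]
which yields the bound $2|f_{n+1}(z)|$.

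There is essentially no obstacle here, since the core identity $\rho(f,A_n/B_n)=|f_{n+1}|$ was already produced earlier via M\"obius invariance of $\rho$; the only step requiring a moment's care is the observation that $|1-\bar ab|=|1-a\bar b|$, which accounts for the specific placement of the conjugate in the statement.
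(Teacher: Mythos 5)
Your conclusion is correct and the core computation (unfolding $\rho$ and using $|1-\bar a b|=|1-a\bar b|$, then bounding by $2$ because $f$ and $A_n/B_n$ are Schur) is fine, but there is a genuine error in the justification of the starting identity. You assert that each $\tau_k$ is a M\"obius automorphism of $\DD$, that the identity $\rho(f,A_n/B_n)=|f_{n+1}|$ therefore holds on $\DD$, and that it then "persists a.e.\ on $\TT$" by nontangential limits. This is false: for fixed $z\in\DD$ one has $|\zeta_{k+1}(z)|<1$, so $\tau_k(\cdot,z)=\bigl(\gamma_k+\zeta_{k+1}(z)\,\omega\bigr)/\bigl(1+\bar\gamma_k\zeta_{k+1}(z)\,\omega\bigr)$ is a \emph{strict} hyperbolic contraction of $\DD$ into a proper subdisk, not an automorphism. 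Consequently $\rho\bigl(f(z),(A_n/B_n)(z)\bigr)<|f_{n+1}(z)|$ for $z\in\DD$ in general, and the identity you want does \emph{not} hold in the open disk. The correct observation is the opposite one: for $z\in\TT$ the factor $\zeta_{k+1}(z)$ is unimodular, so each $\tau_k(\cdot,z)$ genuinely is an automorphism of $\DD$, and the equality $\rho(f,A_n/B_n)=|f_{n+1}|$ holds pointwise on $\TT$ with no boundary-limit argument needed. This is indeed how the paper uses that identity in Theorem \ref{t01}. Once that is fixed, the rest of your argument goes through verbatim.

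As for the route itself, it differs from the paper's. The paper's proof of this lemma is purely algebraic: one inverts the Nevanlinna parametrization \eqref{NP} to get $\zeta_{n+1}f_{n+1}=(A_n-B_nf)/(fA_n^*-B_n^*)$, takes moduli on $\TT$ where $|\zeta_{n+1}|=1$ and $|A_n^*|=|A_n|$, $|B_n^*|=|B_n|$, and the stated equality drops out directly; the bound by $2$ is then the same Schur-function observation. Your approach re-derives the equality from the pseudohyperbolic-distance identity proved inside Theorem \ref{t01}. The two are equivalent in content (both ultimately trace the same M\"obius structure), but the paper's version is self-contained and avoids importing an intermediate identity whose validity domain needs the care you missed.
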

\begin{proof}
The equality follows from $\xi_{n+1}f_{n+1}=(A_n-B_nf)/(fA_n^*-B_n^*)$ which is
inverse to \eqref{NP}. The inequality
follows because $f$ and $A_n/B_n$ are Schur.
\end{proof}
\begin{theorem}
\label{thmfABp}
The limiting relation
\begin{equation}\label{e6}
\lim_n\int_\TT \left|f-\frac{A_n}{B_n}\right|^p P(.,\alpha_{n+1})\, dm=0
\end{equation}
holds in the following cases:
\begin{enumerate}
\item if $p\ge2$ under the assumptions of Corollary \ref{cgce L2 disque ferme},
\item for $1\leq p<\infty$ if
$Acc\,(\alpha_k)\cap\TT=\emptyset$  and $|f|<1$ a.e. on $\TT$.
\end{enumerate}
\end{theorem}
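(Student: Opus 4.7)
The key observation for both parts is Lemma \ref{majfmAB}, which provides the pointwise bound $|f - A_n/B_n| \le 2|f_{n+1}|$ on $\TT$. So the strategy is to reduce estimates of $|f-A_n/B_n|^p$ to estimates already at hand for $|f_{n+1}|^2$ via the Poisson kernel.

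\textbf{Case (1), $p\ge 2$.} Since both $f$ and $A_n/B_n$ are Schur, $|f-A_n/B_n|\le 2$ pointwise on $\TT$. Combining this with Lemma \ref{majfmAB},
\[
\left|f-\frac{A_n}{B_n}\right|^p=\left|f-\frac{A_n}{B_n}\right|^{p-2}\left|f-\frac{A_n}{B_n}\right|^{2}\le 2^{p-2}\cdot 4|f_{n+1}|^2=2^p|f_{n+1}|^2.
\]
Integrating against $P(.,\alpha_{n+1})\,dm$ and invoking Corollary \ref{cgce L2 disque ferme} yields \eqref{e6}.

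\textbf{Case (2), $1\le p<\infty$.} Under the hypothesis $Acc(\alpha_k)\cap\TT=\emptyset$, the sequence $(\alpha_k)$ is compactly contained in $\DD$; hence there is $R<1$ with $|\alpha_k|\le R$ for all $k$, so the Poisson kernels $P(.,\alpha_{n+1})$ are bounded above and below on $\TT$ by positive constants independent of $n$. By Theorem \ref{lim int f_n^2 compact}, the assumption $|f|<1$ a.e.\ on $\TT$ gives $\int_\TT |f_{n+1}|^2 P(.,\alpha_{n+1})\,dm\to 0$, and the lower bound on $P$ then forces $f_{n+1}\to 0$ in $L^2(\TT)$. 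In particular, $|f-A_n/B_n|\le 2|f_{n+1}|\to 0$ in measure on $\TT$. Since $|f-A_n/B_n|\le 2$ uniformly, dominated convergence yields $\|f-A_n/B_n\|_p\to 0$, and the uniform upper bound on $P(.,\alpha_{n+1})$ gives \eqref{e6}.

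\textbf{Potential obstacle.} The only mildly delicate point is the regime $1\le p<2$, which is not covered by the crude pointwise sandwich used in case (1). In case (2), the compactness of $(\alpha_k)$ in $\DD$ trivializes the issue through boundedness of $P$; had one wished to handle $1\le p<2$ under the weaker assumptions of case (1), one would apply H\"older's inequality with exponents $2/p$ and $2/(2-p)$, writing
\[
\int_\TT \left|f-\frac{A_n}{B_n}\right|^p P(.,\alpha_{n+1})\,dm
\le \left(\int_\TT \left|f-\frac{A_n}{B_n}\right|^2 P(.,\alpha_{n+1})\,dm\right)^{p/2}\!\!\left(\int_\TT P(.,\alpha_{n+1})\,dm\right)^{(2-p)/2},
\]
and using that $\int_\TT P(.,\alpha_{n+1})\,dm=1$ together with the $p=2$ conclusion already obtained in case (1). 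Either way, once Lemma \ref{majfmAB} and Corollary \ref{cgce L2 disque ferme} (resp.\ Theorem \ref{lim int f_n^2 compact}) are in place, the proof reduces to routine interpolation.
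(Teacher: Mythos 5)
Your proof is correct and takes essentially the same route as the paper's (very terse) argument: both rest on Lemma \ref{majfmAB}, the weighted $L^2$ convergence of the Schur remainders (Corollary \ref{cgce L2 disque ferme} resp.\ Theorem \ref{lim int f_n^2 compact}), the uniform bound $|f-A_n/B_n|\le 2$, and for case (2) dominated convergence via a.e.-convergent subsequences of $L^2$-convergent sequences. The closing remark -- that H\"older's inequality against $\int_\TT P(.,\alpha_{n+1})\,dm=1$ would extend case (1) to $1\le p<2$ -- is a valid sharpening not claimed in the statement.
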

\begin{proof} This is immediate from Lemma \ref{majfmAB},
Corollary \ref{cgce L2 disque ferme},
the fact that $|f_n|\leq1$, the existence of pointwise a.e. 
converging subsequences in $L^2$-convergent sequences, 
and the dominated convergence theorem.
\end{proof}
\subsection{Uniform convergence} 
When $\mu$ is sufficiently smooth, the previous $L^p$-convergence 
is uniform.  

\begin{theorem}
\label{convpAB}
Let \eqref{e04} hold and $d\mu=\mu'dm$ be absolutely continuous 
on $\TT$. Assume that $\mu'\in W^{1-1/p,p}(\TT)$ 
with $p>4$.
If $\mu'>0$ on some neighborhood $\cO(Acc(\alpha_k)\cap\TT)$, 
then 
\begin{equation}\label{einfini}
\lim_n\left\|\left(f-\frac{A_n}{B_n}\right) \sqrt{P(.,\alpha_{n+1})}\right\|_{\infty}=0.
\end{equation}
\end{theorem}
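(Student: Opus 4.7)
The plan is to reduce \eqref{einfini} to a uniform bound on the analytic function
\[
h_n(z) := f_{n+1}(z)\,\frac{\sqrt{1-|\alpha_{n+1}|^2}}{1-\bar\alpha_{n+1}z} \;\in\; H^\infty(\DD),
\]
which satisfies $|h_n|^2 = |f_{n+1}|^2 P(\cdot,\alpha_{n+1})$ on $\TT$. First, Lemma~\ref{majfmAB} gives $|f-A_n/B_n|\sqrt{P(\cdot,\alpha_{n+1})} \le 2|h_n|$ on $\TT$, so the goal becomes $\|h_n\|_{L^\infty(\TT)} \to 0$. The strategy is to combine an $L^2$-decay coming from Corollary~\ref{cgce L2 disque ferme} with a uniform H\"older bound on $h_n|_{\TT}$ derived from the Sobolev regularity of $\mu'$, and then to interpolate.

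The $L^2$-decay is immediate: the Sobolev embedding $W^{1-1/p,p}(\TT) \hookrightarrow C^{1-2/p}(\TT)$ (valid for $p>2$) makes $\mu'$ continuous on $\TT$, so together with $\mu_s=0$ and $\mu'>0$ on $\cO(Acc(\alpha_k)\cap\TT)$, conditions \eqref{e08}-\eqref{e083} hold; invoking Theorem~\ref{t001} along subsequences accumulating on $\TT$ (and using Wall's theorem together with the uniform boundedness of $P(\cdot,\alpha)$ for $\alpha$ in a compact of $\DD$ for the remaining ones) yields $\|h_n\|_{L^2(\TT)} \to 0$. The key step is then to prove a uniform H\"older bound $\|h_n\|_{C^\beta(\TT)} \le M$ for some $\beta\in(0,1)$. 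Starting from Theorem~\ref{Psi_n sigma'} in the form
\[
|h_n|^2 = P(\cdot,\alpha_{n+1}) - \mu'|\phi_{n+1}|^2 \Big|1 - \zeta_{n+1}\frac{\phi_{n+1}}{\phi_{n+1}^*}f_{n+1}\Big|^2,
\]
the plan is to use the pointwise estimates on $\phi_n$ supplied by Proposition~\ref{bornephin}, together with the uniform Szeg\H{o}-type asymptotics of Theorem~\ref{c04}, to get uniform H\"older control on the right-hand side and hence on $h_n$ itself. The threshold $p>4$ is exactly what makes the H\"older exponent $1-2/p$ of $\mu'$ exceed $1/2$, which is the exponent required by the $\bar\partial$-estimates underlying Proposition~\ref{bornephin}.

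Granting the uniform H\"older bound, a standard interpolation concludes: if the assertion failed, there would exist $\varepsilon>0$ and $z_n\in\TT$ with $|h_n(z_n)|\ge\varepsilon$ along a subsequence, and the H\"older bound would force $|h_n(\xi)|\ge\varepsilon/2$ on an arc of length comparable to $(\varepsilon/M)^{1/\beta}$ around $z_n$, giving $\|h_n\|_{L^2(\TT)}^2 \gtrsim \varepsilon^{2+1/\beta}/M^{1/\beta}$ and contradicting $\|h_n\|_{L^2(\TT)}\to 0$. The hard part is the uniform H\"older bound on $h_n$: it is the crux of the argument and depends essentially on the delicate $\bar\partial$-based estimates of Proposition~\ref{bornephin}, which is precisely where the sharp condition $p>4$ enters.
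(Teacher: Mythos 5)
Your overall skeleton---bound the quantity by a dominating $H^\infty$-function, combine an $L^2$-decay with a uniform modulus-of-continuity estimate, and interpolate---is in the same spirit as the paper's argument, which similarly combines an $L^2$-statement (Corollary~\ref{c01}) with equicontinuity (Proposition~\ref{bornephin}~$ii)$) and Ascoli inside Corollary~\ref{c02}. But your proof has a genuine gap at what you yourself call ``the crux'': the uniform H\"older bound on $h_n$ (or on $|h_n|^2$) is announced as a ``plan'' and never established, and the route you sketch to obtain it looks unlikely to work. In your displayed identity
\[
|h_n|^2 \;=\; P(\cdot,\alpha_{n+1}) \;-\; \mu'\,|\phi_{n+1}|^2\,\Bigl|1 - \zeta_{n+1}\tfrac{\phi_{n+1}}{\phi_{n+1}^*}f_{n+1}\Bigr|^2,
\]
\emph{both} terms on the right have H\"older (indeed, $L^\infty$) norms that blow up like $(1-|\alpha_{n+1}|)^{-1}$ as $\alpha_{n+1}\to\TT$; the desired control of the difference must come from a cancellation that you do not exhibit. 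Moreover, Proposition~\ref{bornephin} does \emph{not} give ``pointwise estimates on $\phi_n$'' of the kind you need: inequality \eqref{majphinp} shows $\mu'^2|\phi_n|^2 \le C + \mu' P(\cdot,\alpha_n)$, i.e. $|\phi_n|$ itself may grow like $\sqrt{P(\cdot,\alpha_n)}$; the object that the proposition shows to be uniformly bounded and H\"older is $u_n$, not $\phi_n$.

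This is not just a missing computation: your chosen dominant $h_n=f_{n+1}\sqrt{1-|\alpha_{n+1}|^2}/(1-\bar\alpha_{n+1}z)$ is a genuinely weaker reduction than the one the paper uses. Writing $|f-A_n/B_n|=|f_{n+1}|\,|1-(A_n/B_n)\bar f|$ and using \eqref{interpol_psi_phi_A_n_B_n}, \eqref{ORF&Poisson} and $\mathrm{Re}\,F_\mu\ge0$, one obtains the sharp inequality
\[
\Bigl|\bigl(f-\tfrac{A_n}{B_n}\bigr)\sqrt{P(\cdot,\alpha_{n+1})}\Bigr|\;\le\;\sqrt{2}\,\bigl|F_\mu\phi_{n+1}^*-\psi_{n+1}^*\bigr|
\qquad\text{on }\TT,
\]
where the $\sqrt{P}$-blowup is absorbed by $|\phi_{n+1}^*+\psi_{n+1}^*|^2=|\phi_{n+1}^*|^2+|\psi_{n+1}^*|^2+2P(\cdot,\alpha_{n+1})\ge 2P(\cdot,\alpha_{n+1})$. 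The function $g_{n+1}:=F_\mu\phi_{n+1}^*-\psi_{n+1}^*$ equals $z\cB_{n+1}u_{n+1}$, so its modulus on $\TT$ is $|u_{n+1}|$, which is exactly what Proposition~\ref{bornephin}~$ii)$ controls uniformly; Corollary~\ref{c02} then gives uniform convergence of $g_{n+1}$ to zero via an $L^2$-limit and equicontinuity. By contrast, $|h_n|=|f-A_n/B_n|\sqrt{P}\,/\,|1-(A_n/B_n)\bar f|$, and the factor $|1-(A_n/B_n)\bar f|$ can be small, so $|h_n|$ is not directly bounded by $|u_{n+1}|$; you therefore have no a priori reason to expect a uniform H\"older bound for $h_n$. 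To repair the argument you should replace $h_n$ by $F_\mu\phi_{n+1}^*-\psi_{n+1}^*$ and invoke Corollary~\ref{c02}, or else supply the missing proof of the uniform modulus-of-continuity bound on $|h_n|^2$ directly---which, in effect, will reproduce the paper's computation.
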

\begin{proof}
 From \eqref{e06} and \eqref{interpol_psi_phi_A_n_B_n},
one easily computes
\[f(z)-A_n/B_n(z)=2\frac{F_\mu\phi_{n+1}^*(z)-\psi_{n+1}^*(z)}
{z(1+F_\mu(z))(\psi_{n+1}^*(z)+\phi_{n+1}^*(z))}.
\]
Besides, we observe from \eqref{ORF&Poisson} that
\[\left|\phi_{n+1}^*(z)+\psi_{n+1}^*(z)\right|^2=
\left|\phi_{n+1}^*(z)\right|^2+\left|\psi_{n+1}^*(z)\right|^2+2P(z,\alpha_{n+1}),\ \ \ z\in\TT.
\]
From this and the fact that $\mbox{Re}F_\mu\geq0$, we obtain on $\TT$ that
\[|(f-A_n/B_n)\sqrt{P(.,\alpha_{n+1})}|\leq
\sqrt2\,|F_\mu\phi_{n+1}^*-\psi_{n+1}^*|.
\]
Observing that $\mu'$ is continuous on $\TT$ since $p>4$,
we may apply Corollary
\ref{c02} to the effect that $F_\mu\phi_n^*-\psi_n^*$ converges to zero 
uniformly on $\TT$.
\end{proof}

\section{A Szeg\H{o}-type problem}\label{s7}
In this final section, we study the asymptotic behavior of ORFs 
looking for an analogue of the Szeg\H o theorem in the rational setting
when $(\alpha_k)$ may approach $\TT$. The results are of a novel type 
and, we hope, worthy for themselves. We need them also to complete the proofs
of Theorems \ref{cvg hyperbolic} and  \ref{convpAB}.

\subsection{Preliminaries} 
\label{ss71}
With $\pi_n$ defined as in (\ref{Ln}), we denote by 
$\mathcal{P}_n\left(d\mu/|\pi_n|^2 \right)$ $\subset L^2\left(d\mu/|\pi_n|^2\right)$  the subspace of  polynomials of degree at most $n$. The space $H^2\left(d\mu/|\pi_n|^2\right)$ is the closure of all polynomials in  $L^2\left(d\mu/|\pi_n|^2\right)$. The reproducing kernels of  
$H^2\left(d\mu/|\pi_n|^2\right)$ and $\mathcal{P}_n\left(d\mu/|\pi_n|^2 \right)$, are denoted by $E_n$ and $R_n$, respectively. 
Since there will be several measures involved, 
we indicate the dependence in square brackets when necessary. \
For example, we may write $\phi_n[\mu], E_n[\mu], R_n[\mu]$, 
or $S[\mu]$, see \eqref{e081}.
We also put $d\mu_n:=d\mu/|\pi_n|^2$.
\begin{proposition}
Let $\mu\in\sz$ be absolutely continuous. Then, 
\[
E_n[\mu](\xi,\omega)=\frac{1} {1-\xi \bar \omega} \frac{\pi_n(\xi) \overline{\pi_n(\omega)}} {S(\xi) \overline{S(\omega)}}.
\]
\label{E_n}
\end{proposition}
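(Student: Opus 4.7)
The plan is to realize $H^2(d\mu_n)$ as the Szeg\H o space $H^2(dm)$ pulled back by multiplication with the outer function $G:=S/\pi_n$, and then transport the Szeg\H o reproducing kernel. First, since $S$ is outer in $H^2$ with $S(0)>0$ and $\pi_n(z)=\prod_{k=1}^n(1-\bar\alpha_k z)$ has all its zeros outside $\overline{\DD}$, both $\pi_n$ and $1/\pi_n$ are invertible outer functions in $H^\infty(\DD)$. Hence $G=S/\pi_n$ is outer in $H^2(\DD)$ with $G(0)>0$, and on $\TT$ one has $|G|^2=\mu'/|\pi_n|^2=d\mu_n/dm$; the weight satisfies $\log|G|^2\in L^1(\TT)$ since $\mu\in\sz$ and $\log|\pi_n|$ is bounded on $\TT$.

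Second, I would check that $U:f\mapsto fG$ is a unitary isomorphism from $H^2(d\mu_n)$ onto $H^2(dm)$. Isometry follows at once from $\int|fG|^2\,dm=\int|f|^2\,d\mu_n$. Surjectivity is a consequence of the Szeg\H o density theorem for the weight $|G|^2$: polynomials are dense in $H^2(d\mu_n)$, and every $h\in H^2(dm)$ is the limit of some $p_kG$ with $p_k$ polynomials. Since $G(\omega)\neq0$ for $\omega\in\DD$, this ensures that evaluation $f\mapsto f(\omega)=(Uf)(\omega)/G(\omega)$ is bounded on $H^2(d\mu_n)$, so the space is a reproducing kernel Hilbert space.

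Third, I would identify the kernel by transporting the Szeg\H o kernel $k(\xi,\omega)=1/(1-\xi\bar\omega)$ of $H^2(dm)$. For fixed $\omega\in\DD$, the function
\[
h_\omega(\xi):=\frac{\overline{\pi_n(\omega)}}{\overline{S(\omega)}}\,k(\xi,\omega)
\]
lies in $H^2(dm)$, so the candidate $E_n(\cdot,\omega):=U^{-1}h_\omega=h_\omega/G$ lies in $H^2(d\mu_n)$ and agrees with the announced formula
\[
E_n(\xi,\omega)=\frac{1}{1-\xi\bar\omega}\,\frac{\pi_n(\xi)\overline{\pi_n(\omega)}}{S(\xi)\overline{S(\omega)}}.
\]
To verify the reproducing property, for every polynomial $p$ I would use the unitarity of $U$ and the fact that $UE_n(\cdot,\omega)=h_\omega$ to obtain
\begin{align*}
\langle p,E_n(\cdot,\omega)\rangle_{d\mu_n}
&=\langle pG,h_\omega\rangle_{H^2(dm)}
=\frac{\pi_n(\omega)}{S(\omega)}\,\langle pG,k(\cdot,\omega)\rangle_{H^2(dm)}\\
&=\frac{\pi_n(\omega)}{S(\omega)}\,(pG)(\omega)=p(\omega),
\end{align*}
and conclude by density of polynomials in $H^2(d\mu_n)$.

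The only delicate point is the density/surjectivity statement underlying $U$; it rests on the Szeg\H o condition $\log(\mu'/|\pi_n|^2)\in L^1(\TT)$, guaranteed by $\mu\in\sz$ and the non-vanishing of $\pi_n$ on $\overline{\DD}$. Once that is granted, the remainder is a routine transport of the Szeg\H o kernel through the outer multiplier $G=S/\pi_n$.
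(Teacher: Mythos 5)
Your argument is correct and is exactly the fleshed-out version of what the paper has in mind: the paper's one-line proof invokes density of polynomials in $H^2(d\mu_n)$, the Cauchy formula (the Szeg\H o kernel for $H^2(dm)$), and $|S|^2=\mu'$, which is precisely the transport of the reproducing kernel through the outer multiplier $G=S/\pi_n$ that you carry out. The only cosmetic remark is that the surjectivity of $U$ is more accurately a consequence of Beurling's theorem (the shift-invariant subspace generated by the outer function $G$ is all of $H^2(dm)$) than of a ``Szeg\H o density theorem,'' but the substance and conclusion are right.
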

The proof is straightforward and stems from the density of polynomials 
in $H^2\left(d\mu/|\pi_n|^2\right)$, the Cauchy formula,
and the identity $|S|^2=\mu'$ on $\TT$.
\begin{proposition}
The following identity holds:
\begin{equation}\label{e61}
\left |\pi_n \phi_n^* \right|= \frac{\left| R_n(.,\alpha_n) \right|} {\|R_n(.,\alpha_n) \|_{L^2\left( d\mu_n \right)}}.
\end{equation}
\label{pi_n phi_n R_n}
\end{proposition}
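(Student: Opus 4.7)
The strategy is to convert the claim, which involves reproducing kernels of polynomials with respect to the modified measure $d\mu_n = d\mu/|\pi_n|^2$, into a purely rational statement via the natural isometry. The map $\Psi : (\cL_n, L^2(\mu)) \to (\cP_n(d\mu_n), L^2(d\mu_n))$ defined by $\Psi(g) = \pi_n g$ is an isometric bijection: indeed $\int |\pi_n g|^2 d\mu_n = \int |g|^2 d\mu$, and $\pi_n$ has no zero in $\overline\DD$ since $|\alpha_k| < 1$. Writing $K_n^{\cL}(z,w) = \sum_{k=0}^n \phi_k(z)\,\overline{\phi_k(w)}$ for the reproducing kernel of $\cL_n$ in $L^2(\mu)$, the transfer of reproducing kernels under the multiplier unitary $\Psi$ yields
\[
R_n(z,w) = \pi_n(z)\,\overline{\pi_n(w)}\, K_n^{\cL}(z,w).
\]
The proposition thus reduces to the identity
\[
K_n^{\cL}(z,\alpha_n) = \overline{\phi_n^*(\alpha_n)}\,\phi_n^*(z),
\]
since granting it one gets $R_n(\alpha_n,\alpha_n) = |\pi_n(\alpha_n)\phi_n^*(\alpha_n)|^2$, and $|R_n(z,\alpha_n)|/\|R_n(\cdot,\alpha_n)\|_{L^2(d\mu_n)}$ simplifies to $|\pi_n(z)\phi_n^*(z)|$ after cancellation.

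The main step is then the proof of this identity, which I plan to establish by characterizing $K_n^{\cL}(\cdot,\alpha_n)$ through orthogonality. The subspace $\cL_n^0 := \{g\in\cL_n : g(\alpha_n) = 0\}$ coincides with $\zeta_n\,\cL_{n-1}$: writing $g = p/\pi_n \in \cL_n^0$ forces $p(\alpha_n) = 0$, hence $p = (z-\alpha_n)\tilde p$ with $\tilde p\in\cP_{n-1}$, which gives $g = \zeta_n \cdot (\tilde p/\pi_{n-1})$. Consequently, $K_n^{\cL}(\cdot,\alpha_n)$ is, up to a scalar, the unique element of $\cL_n$ orthogonal to $\zeta_n\cL_{n-1}$ in $L^2(\mu)$, and I will verify that $\phi_n^*$ has this property. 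Exploiting $\phi_n^* = \cB_n (\phi_n)_*$, the identities $\overline{\cB_n} = 1/\cB_n$ and $\cB_n = \zeta_n \cB_{n-1}$ on $\TT$, and $(\phi_n)_* = \overline{\phi_n}$ on $\TT$, a short calculation gives for any $g_1 \in \cL_{n-1}$:
\[
\int_\TT \zeta_n g_1\,\overline{\phi_n^*}\,d\mu
= \int_\TT \zeta_n g_1\,\overline{\cB_n}\,\phi_n\,d\mu
= \int_\TT g_1\,\overline{\cB_{n-1}}\,\phi_n\,d\mu
= \langle \phi_n,\, g_1^*\rangle_{L^2(\mu)},
\]
where $g_1^* := \cB_{n-1}(g_1)_*$ is easily seen to lie in $\cL_{n-1}$. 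The last inner product vanishes because $\phi_n \perp \cL_{n-1}$.

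Hence $K_n^{\cL}(\cdot,\alpha_n) = c\,\phi_n^*$ for some constant $c$. To pin down $c$, I evaluate the reproducing property at $\alpha_n$, which gives $c\,\phi_n^*(\alpha_n) = K_n^{\cL}(\alpha_n,\alpha_n) = \|K_n^{\cL}(\cdot,\alpha_n)\|_{L^2(\mu)}^2 = |c|^2\,\|\phi_n^*\|_{L^2(\mu)}^2 = |c|^2$, using $\|\phi_n^*\|_{L^2(\mu)} = \|\phi_n\|_{L^2(\mu)} = 1$ since $|\phi_n^*| = |\phi_n|$ on $\TT$. This forces $c = \overline{\phi_n^*(\alpha_n)}$ and closes the argument. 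I expect the main obstacle to be the orthogonality calculation above: it is brief but delicate because it intertwines the parahermitian conjugate $(\cdot)_*$ with the $\cB_n$-twisted star on $\cL_n$, and relies on the star sending $\cL_{n-1}$ into itself. Once that identity is in hand, the rest is bookkeeping with the isometry $\Psi$ and a scalar normalization.
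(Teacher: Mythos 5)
Your proof is correct. It establishes the same underlying facts as the paper's argument, but organizes them through a slightly different lens, and it is worth noting what each version does. The paper works directly on the polynomial side: changing variables $t \mapsto 1/\bar t$ and using the star, it shows that $\pi_n\phi_n^*$ is $\mu_n$-orthogonal to every polynomial of degree $\leq n$ vanishing at $\alpha_n$, observes that $R_n(\cdot,\alpha_n)$ enjoys the same orthogonality, and deduces proportionality; since both sides of \eqref{e61} have unit $L^2(d\mu_n)$-norm, the modulus identity follows at once with no need to determine the proportionality constant. You instead introduce the multiplier isometry $\Psi = \pi_n\cdot$ between $(\cL_n, L^2(\mu))$ and $(\cP_n, L^2(d\mu_n))$, transfer reproducing kernels via $R_n(z,w) = \pi_n(z)\overline{\pi_n(w)}K_n^{\cL}(z,w)$, and reduce the claim to the identity $K_n^{\cL}(\cdot,\alpha_n) = \overline{\phi_n^*(\alpha_n)}\phi_n^*$, which you prove by identifying $\cL_n^0 = \zeta_n\cL_{n-1}$, checking $\phi_n^*\perp\zeta_n\cL_{n-1}$ (the same orthogonality computation as the paper's, done on the rational side and using that the star preserves $\cL_{n-1}$), and then pinning down the constant through the reproducing property and $\|\phi_n^*\|_{L^2(\mu)}=1$. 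Your route is a bit longer and proves a stronger intermediate result (the exact constant in the kernel identity, not just proportionality), which the paper sidesteps by comparing normalized quantities; in exchange the isometry viewpoint makes the appearance of $d\mu_n$ transparent and would serve as a reusable template if one needed the kernel $K_n^{\cL}$ explicitly elsewhere. Both are valid; the paper's is the minimal computation, yours is the more structural one.
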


\begin{proof}
 Let $p_{n-1}$ be a polynomial of degree at most $n-1$. As $\phi_n$ is orthogonal to $\mathcal{L}_{n-1}$, we have
\[
 \int_{\mathbb{T}} \overline{\phi_n} \frac{p_{n-1}}{\pi_{n-1}} d\mu = 0.
\]
On the other hand, since $\overline{\phi_n}=(\phi_n)_*$ and $1/t=\bar{t}$ on $\TT$,
\begin{eqnarray*}
 \int_{\mathbb{T}} \overline{\phi_n} \frac{p_{n-1}}{\pi_{n-1}} d\mu & = & \int_{\mathbb{T}} \phi_n^*(t)\frac{\pi_n(t)}{t^n \overline{\pi_n(t)}} \frac{p_{n-1}(t) (1-\bar \alpha_n t)}{\pi_{n}(t)} d\mu(t)\\
& = & \int_{\mathbb{T}} \pi_n(t) \phi_n^*(t) \bar{t}^{n-1} p_{n-1}(t) (\bar t-\bar \alpha_n) \frac{d\mu(t)}{|\pi_n(t)|^2}\\
& = & \int_{\mathbb{T}} \pi_n(t) \phi_n^*(t) \overline{\left( t^{n-1} \overline{p_{n-1}\left(\frac{1}{\bar t}\right)} (t- \alpha_n) \right)} \frac{d\mu(t)}{|\pi_n(t)|^2}.
\end{eqnarray*}
As $t^{n-1} \overline{p_{n-1}\left(1/{\bar t}\right)}$ 
ranges over $\mathcal{P}_{n-1}(z)$ when $p_{n-1}$ does, 
$\pi_n \phi_n^*$ is $\mu_n$-ortho\-go\-nal to every polynomial of degree 
$\leq n$ that vanishes at $\alpha_n$. This is also true of 
$R_n(.,\alpha_n)$, hence $\pi_n \phi_n^*$ and  $R_n(.,\alpha_n)$ are proportional. Since the 
right-hand side  of \eqref{e61} and $\pi_n \phi^*_n$ have unit 
norm in $L^2(d\mu_n)$, we are done.  
\end{proof}

In the following corollary $\mu$ is not necessarily absolutely continuous.
\begin{corollary}
For $\mu\in\sz$ and $n \ge 1$, we have that
\begin{equation}
|\phi_n^*(\alpha_n)|^2 |S(\alpha_n)|^2 (1-|\alpha_n|^2) = \frac{R_n(\alpha_n,\alpha_n)}{E_n[\mu_{ac}](\alpha_n,\alpha_n)} \le 1.
\label{quantite ratio Rn En}
\end{equation}
\label{borne1}
\end{corollary}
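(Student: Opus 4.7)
The plan is to combine Propositions \ref{pi_n phi_n R_n} and \ref{E_n} to obtain the central identity, and then derive the upper bound by $1$ from the standard monotonicity of reproducing kernels under enlargement of the underlying measure.

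First, I would prove the equality. Although Proposition \ref{pi_n phi_n R_n} is stated as an identity of moduli, its proof establishes more: $\pi_n \phi_n^*$ and $R_n(\cdot,\alpha_n)$ both lie in the one-dimensional subspace of $\mathcal{P}_n(d\mu_n)$ consisting of elements $\mu_n$-orthogonal to every polynomial of degree at most $n$ vanishing at $\alpha_n$. Thus $\pi_n(z)\phi_n^*(z)=\lambda\,R_n(z,\alpha_n)$ holds as functions of $z$, for some scalar $\lambda\in\CC$. Taking $L^2(d\mu_n)$-norms and invoking the reproducing identity $\|R_n(\cdot,\alpha_n)\|_{L^2(d\mu_n)}^2=R_n(\alpha_n,\alpha_n)$ forces $|\lambda|=R_n(\alpha_n,\alpha_n)^{-1/2}$; evaluating the proportionality at $z=\alpha_n$ then produces $|\pi_n(\alpha_n)|^2|\phi_n^*(\alpha_n)|^2=R_n(\alpha_n,\alpha_n)$. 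Since the Szeg\H o function depends only on the density $\mu'$, we have $S[\mu]=S[\mu_{ac}]$, so Proposition \ref{E_n} applied to $\mu_{ac}\in\sz$ gives $E_n[\mu_{ac}](\alpha_n,\alpha_n)=|\pi_n(\alpha_n)|^2/\bigl((1-|\alpha_n|^2)|S(\alpha_n)|^2\bigr)$. Dividing these two identities delivers the claimed equality.

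Next, I would derive the upper bound by $1$. By the extremal property of reproducing kernels,
\[
R_n(\alpha_n,\alpha_n)=\sup\{|p(\alpha_n)|^2 : p\in\mathcal{P}_n,\ \|p\|_{L^2(d\mu_n)}\le 1\},
\]
and the analogous supremum characterizes $E_n[\mu_{ac}](\alpha_n,\alpha_n)$, with $H^2(d\mu_{ac}/|\pi_n|^2)$ replacing $\mathcal{P}_n$ and $d\mu_{ac}/|\pi_n|^2$ replacing $d\mu_n$. Given any polynomial $p\in\mathcal{P}_n$ with $\|p\|_{L^2(d\mu_n)}\le 1$, the pointwise bound $d\mu_{ac}/|\pi_n|^2\le d\mu_n$ forces $\|p\|_{L^2(d\mu_{ac}/|\pi_n|^2)}\le 1$; since polynomials sit in $H^2(d\mu_{ac}/|\pi_n|^2)$, this yields $|p(\alpha_n)|^2\le E_n[\mu_{ac}](\alpha_n,\alpha_n)$. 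Taking the supremum over admissible $p$ gives $R_n(\alpha_n,\alpha_n)\le E_n[\mu_{ac}](\alpha_n,\alpha_n)$, which combined with the equality above closes the argument.

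I do not anticipate any serious obstacle. The one point demanding care is that Proposition \ref{pi_n phi_n R_n} is phrased in terms of moduli, while I need to evaluate the underlying proportionality at the interior point $\alpha_n$; this is legitimate because its proof actually exhibits $\pi_n\phi_n^*$ and $R_n(\cdot,\alpha_n)$ as scalar multiples of one another inside $\mathcal{P}_n(d\mu_n)$, so the relation extends from $\TT$ to all of $\CC$.
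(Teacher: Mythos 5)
Your proof is correct and follows essentially the same route as the paper: both combine Proposition \ref{pi_n phi_n R_n} with Proposition \ref{E_n} for the equality and then invoke monotonicity of reproducing kernels under the contractive inclusion $d\mu_{ac}/|\pi_n|^2\le d\mu_n$ for the bound. Your observation that the identity in Proposition \ref{pi_n phi_n R_n} must be read as a proportionality of polynomials (so that evaluation at the interior point $\alpha_n$ is legitimate) is a point the paper glosses over and is worth making explicit; the only cosmetic difference is that the paper splits the inequality into $R_n(\alpha_n,\alpha_n)\le E_n[\mu](\alpha_n,\alpha_n)\le E_n[\mu_{ac}](\alpha_n,\alpha_n)$ via the projection property, whereas you compare $R_n$ to $E_n[\mu_{ac}]$ in a single step via the extremal characterization.
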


\begin{proof}
By elementary properties of reproducing kernels, we get
\[\| R_n(.,\alpha_n) \|^2_{L^2(d\mu_n)}= R_n (\alpha_n,\alpha_n),
\ \ \mbox{and}\ \ 
\| E_n(.,\alpha_n) \|^2_{L^2(d\mu_n)}= E_n (\alpha_n,\alpha_n).
\]
Therefore, from Proposition \ref{pi_n phi_n R_n}, we obtain
\[
 |\pi_n(\alpha_n) \phi_n^* (\alpha_n)|^2 = \frac{|R_n (\alpha_n,\alpha_n)|^2} {\|R_n (.,\alpha_n) \|^2_{L^2\left( d\mu_n\right)}} = R_n(\alpha_n,\alpha_n)
\]
hence the equality in (\ref{quantite ratio Rn En}) from the 
formula for $E_n[\mu_{ac}]$ in Proposition \ref{E_n}.

Observing now that $||.||_{L^2(d\mu_{ac})}\le ||.||_{L^2(d\mu)}$, we get 
a contractive injection
\[H^2\left(d\mu/|\pi_n|^2\right)\subset H^2\left(d\mu_{ac}/|\pi_n|^2\right),\]
from which it follows easily that
$E_n[\mu](w,w)\le E_n[\mu_{ac}](w,w)$, for $w\in\DD$.

Since $R_n (.,\alpha_n)$ is the orthogonal projection of 
$E_n[\mu] (.,\alpha_n)$ on 
$\mathcal{P}_n\left(d\mu/|\pi_n|^2\right)$
\[
\| R_n (.,\alpha_n)\|^2_{L^2(d\mu_{n})} \le \|E_n[\mu] (.,\alpha_n) \|^2_{L^2(d\mu_{n})},
\]
and therefore 
$$
\frac{R_n(\alpha_n,\alpha_n)}{E_n[\mu_{ac}](\alpha_n,\alpha_n)}\le
\frac{R_n(\alpha_n,\alpha_n)}{E_n[\mu](\alpha_n,\alpha_n)}\le 1,
$$
as desired.
\end{proof}
It is a well-known theorem of Beurling that \cite[Ch. II, Theorem 7.1]{Garnett}
that functions of the form $Sp$, with $p$ a polynomial, are dense in 
$H^2(\DD)$ when $S$ is outer. We shall need a local refinement of this 
result (compare to \cite[Ch. 2, Theorem 7.4]{Garnett}) 
where, in addition, polynomials get replaced by functions in $\cL_n$.
 
\begin{lemma}
\label{lemmix}
Let \eqref{e04} hold and $\cO$ be open in $\TT$. 
Let $S\in H^2(\DD)$ be an outer function which is continuous
on $\cO$ with $|S|>\delta>0$ there. Then,
to every compact $K\subset {\mathcal O}$, there is a sequence of rational 
functions $R_{m}\in\cL_{m}$ such that
\begin{itemize}
\item[\it (i) \ ] $\|1-R_mS\|\to 0$ as $m\to\infty$,
\item[\it (ii) ]  the functions $1-R_mS$ go to zero uniformly on $K$.
\end{itemize}
\end{lemma}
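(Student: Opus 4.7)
My plan has two steps. First, using the density of $\bigcup_{m}\cL_{m}$ in the disk algebra $A(\DD)$ under hypothesis \eqref{e04}---a fact cited from Akhiezer in the introduction---I would reduce to producing, for each $\epsilon>0$, a single function $g\in A(\DD)$ satisfying simultaneously $\|1-gS\|<\epsilon$ and $\sup_{K}|1-gS|<\epsilon$. Once such a $g$ is in hand, approximating it uniformly on $\overline\DD$ by some $R_{m}\in\cL_{m}$, the bounds $\|(g-R_{m})S\|\le\|g-R_{m}\|_{A(\DD)}\|S\|$ and $\sup_{K}|(g-R_{m})S|\le\|g-R_{m}\|_{A(\DD)}\sup_{K}|S|$ transfer to $R_{m}S$ (with $\sup_K|S|<\infty$ by the continuity of $S$ on $\cO\supset K$); a standard diagonal extraction then produces the desired sequence.

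Next, to construct such a $g$, I would combine two elementary approximations. Beurling's theorem, applied to the outer $H^{2}$-function $S$, produces a polynomial $p$ with $\|1-pS\|<\epsilon/3$. Mergelyan's theorem, applicable because $K\subsetneq\TT$ is compact with connected complement in $\CC$ and $1/S\in C(K)$ (by the continuity and lower bound $|S|\ge\delta$ on $\cO$), produces a polynomial $q$ with $\sup_{K}|1-qS|<\epsilon/3$. I would then set $g=p+h(q-p)$ with an analytic cutoff $h\in A(\DD)$ close to $1$ uniformly on $K$ and small in $L^{2}$ off a neighborhood of $K$; this cutoff is constructed as a smoothed peak outer function $h=\exp(-N(u+i\tilde u))$, where $u\in C^{\infty}(\TT)$ is nonnegative, vanishes on a neighborhood $K'$ of $K$, and satisfies $u\ge\eta>0$ off some $K''$ with $\overline{K''}\subset\cO$. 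Smoothness of $u$ ensures $\tilde u\in C^{\infty}(\TT)$ by the M.~Riesz theorem, so $h\in A(\DD)$, with $|h|\equiv 1$ on $K'$ and $|h|\le e^{-N\eta}$ off $\cO$. The algebraic identity $gS-1=(1-h)(pS-1)+h(qS-1)$ then delivers both the $L^{2}(\TT)$-bound (dominated by $\|pS-1\|+\|q-p\|_{\infty}\|hS\|_{L^2(\TT\setminus K')}$) and the $L^{\infty}(K)$-bound (dominated by $|1-h|_{K}\cdot|pS-1|_{K}+\epsilon/3$).

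The main obstacle is the phase of $h$ on $K$: although $|h|\equiv 1$ there by construction, the argument $-N\tilde u$ need not vanish on $K$ because the conjugation $u\mapsto\tilde u$ is nonlocal, so $|1-h|_{K}$ is not automatically small and the $L^{\infty}(K)$-bound may be spoiled. I would handle this by a further phase correction, absorbing the factor $e^{iN\tilde u}$ into $q$ via an additional Mergelyan approximation on $K$, or, when $|K|=0$, by invoking directly the Rudin--Carleson theorem, which yields an $h\in A(\DD)$ with $h\equiv 1$ on $K$ and $\|h\|_{\infty}\le 1$ outright (no phase issue). Verifying this compensation step---balancing the tightness of the modulus $|h|\approx 1$ on $K$ against the $L^{2}$-smallness of $hS$ off $K'$---is, in my view, the technical heart of the proof.
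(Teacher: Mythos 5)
Your reduction to $A(\DD)$ via the density of $\bigcup_m\cL_m$, and the two separate approximations (Beurling in $L^2$, Mergelyan in $L^\infty(K)$), are both sound. The gap is in the patching step, and you correctly sense where it lies, but the proposed fixes do not close it. For $|K|>0$ --- and in the paper's application (proof of Proposition \ref{smoothSzego}) $K$ is a compact \emph{neighborhood} of $Acc(\alpha_k)\cap\TT$, so indeed $|K|>0$ --- no bounded analytic $h\not\equiv 1$ can equal $1$ on $K$, since $1-h$ would then vanish on a set of positive measure; the Rudin--Carleson fallback is therefore unavailable. Your outer peak function $h=\exp(-N(u+i\tilde u))$ then inevitably carries a phase $-N\tilde u$ on $K$ that grows with $N$, so $|1-h|$ on $K$ is not small. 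Absorbing this phase into $q$ asks Mergelyan to approximate on $K$ a target oscillating at rate $N$, so $q=q_N$ has degree at least of order $N$, and Mergelyan gives no control on $\|q_N\|_{L^\infty(\TT)}$; yet the part of your $L^2$-estimate off the cutoff region needs $e^{-N\eta}\,\|q_N-p\|_{L^\infty(\TT)}\to0$. Without a quantitative bound showing $\|q_N\|_\infty=o(e^{N\eta})$, the circular dependence between $N$ and $q_N$ is unresolved, and it is not clear that it can be.

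The paper avoids phase and cutoff issues altogether by building a bounded approximate inverse of $S$ directly. It truncates $-\log|S|$ from above to produce $u_n$, forms the bounded outer $S_n$ with $|S_n|=e^{u_n}\le|S|^{-1}$ on $\TT$, so that $|S_nS|\le1$ everywhere with equality on $\cO$ for large $n$. The elementary identity $\|1-S_nS\|^2\le 2\bigl(1-S_n(0)S(0)\bigr)$ combined with a slowly growing choice of truncation levels gives $\sum_n\|1-S_nS\|^2<\infty$, hence $S_nS\to1$ a.e. by Borel--Cantelli. Since each $S_nS$ extends analytically across $\cO$ and these extensions form a normal family, a.e. convergence automatically upgrades to local uniform convergence on $\cO$; there is no polynomial whose sup norm needs controlling. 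Dilation to $S_{n,r_n}\in A(\DD)$ followed by rational approximation under \eqref{e04} then produces the required $R_m$. To salvage your route you would need a quantitative Mergelyan (or Remez-type) estimate whose exponential growth rate is beaten by $\eta$, or else to construct the bounded multiplier directly, which is precisely what the paper does.
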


\begin{proof} 
Recall that $\log|S|\in L^1(\TT)$ and
put $u_n=\min\{a_n, -\log|S|\}$, where $a_n>0$ tends to 
$+\infty$ so fast that 
\begin{equation}
\label{serc}
\sum_{n=0}^\infty \left(1-\exp\left(\int_{\TT}(u_n+\log|S|)\,dm\right)
\right)<\infty.
\end{equation}
Let $S_n$ be the outer function such that $|S_n|=e^{u_n}$ on $\TT$,
normalized so that $S_n(0)>0$. Then $S_n\in H^\infty(\DD)$
and $|S_nS |\leq 1$ on $\TT$ with
$|S_nS|=1$  on $\mathcal{O}$ for $n$ large enough, therefore we can write
\begin{equation}
\label{exev}
SS_n(z)=\exp \left(\int_{\mathbb{T}\setminus\mathcal{O}} 
\frac{t+z}{t-z}\log|SS_n|\, dm(t) \right), 
\end{equation}
showing that $SS_n$ extends analytically across $\mathcal{O}$ 
to an analytic function $G_n$
on $\overline{\CC}\setminus(\TT\setminus\mathcal{O})$. Moreover,
$(G_n)$ is a normal family since $|\log|S_n||\leq |\log|S||$ on $\TT$.
Besides, expanding $\|1-S_nS\|^2$ and using (\ref{serc}), we obtain
\begin{equation}
\label{BorelC}
\sum_{n=0}^\infty \|1-S_nS\|^2\leq 2\,
\sum_{n=0}^\infty (1-S_n(0)S(0))<\infty
\end{equation}
so that, by the Borel-Cantelli lemma, $SS_n$ converges to 1
a.e. on $\TT$. Then, by normality, $SS_n$ converges to 
1 locally uniformly on $\mathcal{O}$. 

Next, fix a compact $K\subset \cO$ and
let $S_{n,r}(z):=S_n(rz)$ for $0<r<1$.
As $S_{n,r}=P_{rz}*S_n$, and
since $S_n\in L^\infty(\TT)$ is
continuous on  ${\mathcal O}$ where it equals $G_n/S$, 
it follows from standard properties of 
Poisson integrals \cite[Ch. 2]{Garnett} that $S_{n,r}$ converges to $S_n$ 
boundedly pointwise a.e. on $\TT$ and locally uniformly on 
${\mathcal O}$ as $r\to1$.  In particular, $S_{n,r}S$ converges 
to $S_nS$ in $L^2(\TT)$ for fixed $n$ as $r\to1$.
Hence to each $n$ there is $r_n$ such that, say,
\[\left\{
\begin{array}{lcl}
\|1-S_{n,r_n}S\|&<&\|1-S_{n}S\|+2^{-n},\\
\sup_K|S_{n,r_n}-S_n|&<&1/n.
\end{array}
\right.
\]
Clearly  $S_{n,r_n}$ lies in $A(\mathbb{D})$, therefore is can be uniformly
approximated on $\TT$ by functions from $\cup_k\cL_k$ since 
\eqref{e04} holds. Therefore, to each $n$, there is an integer $m_n$ and
$R_{m_n}\in\cL_{m_n}$ such that
\begin{equation}
\label{nums}
\left\{
\begin{array}{lcl}
\|1-R_{m_n}S\|&<&\|1-S_nS\|+2^{-n},\\
\sup_K|R_{m_n}-S_n|&<&1/n.
\end{array}
\right.
\end{equation}
Without loss of generality, we assume that $m_n$ strictly increases 
with $n$. Now, by \eqref{BorelC} and since $|SS_n|\in L^\infty(\TT)$, 
the
first relation in \eqref{nums} implies 
\[\sum_{n=0}^\infty \|1-R_{m_n}S\|^2<\infty\]
whence $R_{m_n}S$ converges to 1 in $H^2(\DD)$ as $n\to\infty$.
In another connection,
\[|1-R_{m_n}S|\leq |1-S_nS|+|R_{m_n}-S_n||S|\]
and the second relation in \eqref{nums} yields that
 $R_{m_n}S$ converges  uniformly to 1 on $K$ when $m_n\to\infty$.
To complete the proof, it remains  to put 
$R_m=R_{m_{k}}$ where $k$ is the greatest integer such that 
$m_k\leq m$.
\end{proof}

\subsection{An a priori bound on ORFs} 
We derive in this subsection {\it a priori} estimates for ORFs 
akin to the classical bounds for orthogonal polynomials
\cite[Ch. 4, Theorems 4.6, 4.8]{Geronimus1}, \cite[Ch. 12, Theorem 12.1.3]{Szego}.
These in fact are new even in the classical polynomial case,
as they yield information in cases where $\mu'$ vanishes,
thereby generalizing some of the results from \cite{Rakhmanov}.
Their proof rely on basic properties of the Sobolev spaces $W^{1,p}(\Omega)$. 
For $1<p<\infty$ and $\Omega\subset\CC$ an open set with
boundary $\partial\Omega$, recall that
$$
W^{1,p}(\Omega)=\{f\in L^p(\Omega) : ||f||_{L^p(\Omega)}+||f'||_{L^p(\Omega)}
<\infty\},
$$
where the derivatives are understood in the distributional sense.
If $\partial\Omega$ is piecewise smooth
and $\mathcal D$ indicates the space
of $\mathcal{C}^\infty$ 
functions with compact support in $\CC$, then
the restriction  $\mathcal D|_{\Omega}$  is dense in $W^{1,p}(\Omega)$.
For $g\in W^{1,p}(\Omega)$ and $(g_n)$ a sequence in $\mathcal D|_{\Omega}$
converging to 
$g$, one can show that the trace of $g_n$ on $\partial\Omega$ 
converges in  $W^{1-1/p,p}(\Omega)$, see \eqref{defW1-1/p}.
This allows one to define the 
\emph{trace} of $g\in W^{1,p}(\Omega)$ on $\partial\Omega$  as a member of 
$W^{1-1/p,p}(\partial\Omega)$. With this definition,
Stokes' formula holds for Sobolev differential forms 
just like it does for smooth ones. 

We put $\eta=x+iy$ and use the standard notation
\[\frac{\partial}{\partial \eta}=\frac{1}{2}\left(\frac{\partial}{\partial x}
-i\frac{\partial}{\partial y}\right),\qquad
\frac{\partial}{\partial {\bar \eta}}=\frac{1}{2}\left(
\frac{\partial}{\partial x}+i\frac{\partial}{\partial y}\right).\]
The usual rules of differentiation apply to $\partial/\partial\eta$,
$\partial/\partial{\bar \eta}$, and the
relation $\partial V/\partial\bar{\eta}=0$ means 
that $V$ is analytic.  
We need a function-theoretic lemma. 
\begin{lemma}
\label{SobSmu}
Let $I\subset\TT$ be an open arc and $\Omega\subset \DD$ an open set such that
$\overline{\Omega}\cap \TT\subset I$. If $g\in H^1(\DD)$ is 
such that $g|_{I}\in W^{1-1/p,p}(I)$ for some $1<p<\infty$,
then $g|_{\Omega}\in W^{1,p}(\Omega)$.
\end{lemma}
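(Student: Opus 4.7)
The plan is to represent $g$ on $\Omega$ via Cauchy's formula along a hybrid contour, part of which sits on $I$ where trace regularity is available, and then to handle each resulting piece separately. First I fix two open arcs $J_1\subset J_2$ of $\TT$ with
\[\overline\Omega\cap\TT\subset J_1\subset\overline{J_1}\subset J_2\subset\overline{J_2}\subset I,\]
together with a smooth arc $\Gamma_\DD\subset\DD$ joining the endpoints of $J_2$, chosen so that $\Gamma:=J_2\cup\Gamma_\DD$ bounds a Jordan domain $\Omega'\subset\DD$ containing $\Omega$, with $\mathrm{dist}(\overline\Omega,\Gamma_\DD)>0$ and $\mathrm{dist}(\overline\Omega\cap\TT,\partial J_2)>0$. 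Since $g\in H^1(\DD)$, the dilates $g_r(z):=g(rz)$ converge to $g$ in $L^1(\TT)$ as $r\to 1^-$, which lets me pass to the limit in Cauchy's formula applied on $\Gamma$; for $\eta\in\Omega$ this yields
\[g(\eta)=\frac{1}{2\pi i}\int_{\Gamma_\DD}\frac{g(\zeta)}{\zeta-\eta}\,d\zeta+\frac{1}{2\pi i}\int_{J_2}\frac{g(\zeta)}{\zeta-\eta}\,d\zeta=:g_\DD(\eta)+g_\TT(\eta).\]

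The piece $g_\DD$ is analytic in a full neighborhood of $\overline\Omega$ because $\Gamma_\DD$ stays at positive distance from $\overline\Omega$, so $g_\DD|_\Omega\in W^{1,p}(\Omega)$ trivially. For $g_\TT$, I pick a cutoff $\varphi\in C^\infty(\TT)$ with $\varphi\equiv 1$ on $J_1$ and $\mathrm{supp}\,\varphi\Subset J_2$, and split $g_\TT=h+k$, where
\[h(\eta):=\frac{1}{2\pi i}\int_\TT\frac{\varphi(\zeta)g(\zeta)}{\zeta-\eta}\,d\zeta,\qquad k(\eta):=\frac{1}{2\pi i}\int_{J_2}\frac{(1-\varphi(\zeta))g(\zeta)}{\zeta-\eta}\,d\zeta.\]
The integrand of $k$ is supported in $J_2\setminus J_1$, which lies at positive distance from $\overline\Omega$; hence $k$ is analytic and bounded in a neighborhood of $\overline\Omega$, and $k|_\Omega\in W^{1,p}(\Omega)$.

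For $h$, the function $\psi:=\varphi\,g$, extended by zero outside $J_2$, lies in $W^{1-1/p,p}(\TT)$: indeed $g|_I\in W^{1-1/p,p}(I)$ by hypothesis, $\varphi$ is smooth, and multiplication by such a cutoff compactly supported inside $I$ followed by extension by zero preserves fractional Sobolev regularity. Since the conjugation operator acts on $W^{1-1/p,p}(\TT)$ (as recalled in Section \ref{ss03}), so does the Riesz projection $P_+$, and the nontangential boundary trace of the Cauchy integral $h$, which agrees up to an additive constant with $P_+\psi$, belongs to $W^{1-1/p,p}(\TT)$. Since $h$ is harmonic in $\DD$ it coincides with the Poisson extension of its boundary trace, and the (inverse) trace theorem for fractional Sobolev spaces, to the effect that this Poisson extension maps $W^{1-1/p,p}(\TT)$ boundedly into $W^{1,p}(\DD)$, yields $h\in W^{1,p}(\DD)\subset W^{1,p}(\Omega)$. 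Combining the three contributions, $g|_\Omega=g_\DD+k+h\in W^{1,p}(\Omega)$. The only delicate step is the justification of Cauchy's formula on the hybrid contour $\Gamma$ for an $H^1$ function, which is handled by the standard dilation argument using $L^1(\TT)$-convergence $g_r\to g$; everything else reduces to the action of the Hilbert transform on $W^{1-1/p,p}(\TT)$ and the Poisson/trace duality between $W^{1-1/p,p}(\TT)$ and $W^{1,p}(\DD)$.
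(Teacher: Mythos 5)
Your proof is correct but follows a genuinely different route from the paper's. The paper extends $g|_I$ to $h\in W^{1-1/p,p}(\TT)$, takes its Poisson (harmonic) extension $U\in W^{1,p}(\DD)$, notes that any harmonic conjugate $V$ also lies in $W^{1,p}(\DD)$ because $\partial V/\partial\bar\eta=i\,\partial U/\partial\bar\eta$, and sets $F=U+iV$; then $g-F$ reflects analytically across $I$ and the decomposition $g=F+(g-F)$ gives the claim. You instead localize via a cutoff $\varphi$ and decompose $g$ through Cauchy's formula on a hybrid contour $\Gamma_\DD\cup J_2$, so that two of the three pieces ($g_\DD$ and $k$) are analytic in a full neighborhood of $\overline\Omega$ and the third, $h$, is handled by the boundedness of the Riesz projection on $W^{1-1/p,p}(\TT)$ together with the Poisson/trace duality $W^{1-1/p,p}(\TT)\to W^{1,p}(\DD)$. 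Both proofs ultimately lean on the same two ingredients (the conjugation operator's action on fractional Sobolev spaces and the elliptic-regularity trace theorem for the Poisson extension), but your Cauchy-integral/cutoff decomposition avoids the Schwarz reflection step -- which, for complex-valued $g$, implicitly requires the argument to be applied to real and imaginary parts separately -- at the cost of invoking $P_+$ boundedness on $W^{1-1/p,p}(\TT)$ directly (which the paper already has in Section~\ref{ss03}). Your argument is arguably more self-contained; one small point worth making explicit is that the extension by zero of $\varphi g$ lands in $W^{1-1/p,p}(\TT)$ precisely because $\varphi g$ is supported in a compact subarc of $I$, so the cross terms in the Gagliardo seminorm are controlled by the $L^p$ norm -- otherwise extension by zero can fail at the borderline $s=1/p$.
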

\begin{proof}
The restriction $g|_{I}$ extends to a function
$h\in W^{1-1/p,p}(\TT)$ \cite[7.69]{Adams}.
By standard elliptic regularity,
there is a harmonic function $U\in W^{1,p}(\DD)$ such that 
$U|_{\TT}=h$, where the trace is understood in the 
Sobolev sense
\cite{campanato}. Any harmonic conjugate $V$ of $U$
in turn belongs to $W^{1,p}(\DD)$ for 
$\partial V/\partial{\bar \eta}= i\partial U/\partial{\bar \eta}$
by the Cauchy-Riemann equations. Hence the analytic function
$F=U+iV$ lies in $W^{1,p}(\DD)$, and it follows from the 
definition that $F(r\eta)\to F(\eta)$ in $W^{1,p}(\DD)$ as $r\to1^-$.
Thus by the trace theorem, the restriction of $F$
to every circle $\TT_r$ centered at 0 of radius $r<1$ has 
$W^{1-1/p,p}(\TT_r)$ norm at most $C \|F\|_{W^{1,p}(\DD)}$ for 
some constant $C$ \cite[7.39]{Adams}.
{\it A fortiori} then, $F\in H^p(\DD)$ and its Sobolev trace on $\TT$ 
must coincide
with its nontangential limit.
Consequently  $g-F$ is a $H^1(\DD)$-function which is pure imaginary
on $I$, therefore it extends analytically by reflection across $I$.
In particular $g-F$ is smooth on a neighborhood of $\overline{\Omega}$
and $g=F+(g-F)$ lies in $W^{1,p}(\Omega)$.
\end{proof}

Our {\it a priori} bound will depend on the connection between $\phi_n$, $\psi_n$
and $u_n$ defined in \eqref{Hermitebisc}. By Proposition \ref{F_psi_phi},
$zu_n(z)=F_\mu(z)(\phi_n(z))_*-(\psi_n(z))_*$ for $z\in\TT$.
Multiplying by $\phi_n$ and taking 
real parts,  we get for $z\in\TT$
\[\mu'(z)|\phi_n(z)|^2={\rm Re}((\psi_n)_*(z)\phi_n(z))+{\rm Re}
(z\phi_n(z)u_n(z)),
\]
and a short computation using \eqref{ORF&Poisson} gives us
\[\left|\mu'(z)\phi_n(z)-\frac{z{\bar u}_n(z)}{2}\right|^2=
\mu'(z)P(z,\alpha_n) + \left|\frac{u_n(z)}{2}\right|^2.
\]
Thus, either $|\mu'(z)\phi_n(z)|\leq |u_n(z)|$ or 
$|\mu'(z)\phi_n(z)|^2/4<\mu'(z)P(z,\alpha_n)+|u_n(z)|^2/4$.
Therefore, for $z\in \TT$,
\begin{equation}
\label{inegmuu}
\mu'^2(z)|\phi_n(z)|^2\leq |u_n(z)|^2+\mu'(z) P(z,\alpha_n).
\end{equation}
\begin{proposition}\label{bornephin}
Let $\mu\in\sz$ and $I\subset \TT$ be an open arc disjoint from $\supp\mu_s$.
Assume that $S\in W^{1-1/p,p}(I)$ with $4/3<p<\infty$.
Then, to each compact $K\subset I$, 
\begin{itemize}
\item[\it i)]
there is a neighborhood ${\mathcal O}(K)$ in $I$
such that ${u_n}|_{{{\mathcal O}(K)}} \in W^{1-1/\gamma,\gamma}(\mathcal{O})$ 
for $1<\gamma<4p/(p+4)$,
with  norm depending on $\mu$, $K$, and $\gamma$ only.
\item[\it ii) ]  If moreover $p>4$, then $|u_n|\leq C$ on $\mathcal{O}(K)$
and $u_n\in H_s(\mathcal{O}(K))$ for $0<s<(p-4)/2p$,
where $C$ and the H\"older constant depend 
on $\mu$, $K$, and $s$ only; in particular, from \eqref{inegmuu}, we obtain for $\xi\in K$
\begin{equation}
\label{majphinp}
\mu'^2(\xi)|\phi_n(\xi)|^2\leq C+\mu' P(\xi,\alpha_n).
\end{equation}
\end{itemize}
\end{proposition}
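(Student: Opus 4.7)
The plan is to exploit the identity
\[
z\,u_n(z)=F_\mu(z)(\phi_n)_*(z)-(\psi_n)_*(z),\qquad z\in\DD,
\]
obtained from Proposition \ref{F_psi_phi} by dividing \eqref{divFmu} through by $\mathcal{B}_n$ and using $\phi_n^*=\mathcal{B}_n(\phi_n)_*$. The basic quantitative input is that the Szeg\H{o} function $S\in H^2(\DD)$, with $|S|^2=\mu'$ a.e., yields
\[
\|\phi_n^* S\|_{H^2(\DD)}^2=\int_\TT |\phi_n|^2\mu'\,dm\le\int_\TT |\phi_n|^2\,d\mu=1,
\]
hence $\phi_n^* S$ is uniformly bounded in $H^2(\DD)$.

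First I would pick a sub-arc $I_0\Subset I$ containing $K$ with $\overline{I_0}\cap\supp\mu_s=\emptyset$ and split the defining Cauchy integral
\[
u_n(z)=2\int_\TT (\phi_n)_*(t)\,\frac{d\mu(t)}{t-z}
\]
into a principal piece carried by $I_0$ against $\mu'\,dm$, and a remainder over $\TT\setminus I_0$ that collects the whole of $\mu_s$. The remainder is analytic on a complex neighborhood of $K$ since its kernel is uniformly bounded there, and Cauchy-Schwarz with $\|\phi_n\|_{L^2(\mu)}=1$ yields a uniform sup-norm bound in $n$. The principal density rewrites as $(\phi_n)_*\mu'=\overline{\phi_n S}\cdot S$, controlled in $L^r(I_0)$ with $r=2p/(4-p)$ for $4/3<p<2$ and $r=2$ for $p\ge 2$, thanks to H\"older's inequality combined with the Sobolev embedding $W^{1-1/p,p}(I_0)\hookrightarrow L^{p/(2-p)}$ (respectively $L^\infty$ for $p>2$) and the uniform $L^2$-bound on $\phi_n S$.

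To lift this $L^r$-control to Sobolev regularity of $u_n$ on $\mathcal{O}(K)\subset\TT$, I would introduce a cutoff $\chi\in\mathcal{D}$ equal to $1$ near $K$ and supported in a complex neighborhood of $K$ whose trace on $\TT$ lies in $I_0$. The product $\chi u_n$ is then analyzed through the $\bar\partial$-equation on that neighborhood; the right-hand side splits into the off-boundary term $u_n\,\bar\partial\chi$ (bounded by the previous step) and a jump term across $\TT$ proportional to $\chi\cdot(\phi_n)_*\mu'$. Solving via the Cauchy-Green formula and invoking Lemma \ref{SobSmu} together with the boundedness of the conjugation operator on $W^{1-1/p,p}$, one obtains $\chi u_n\in W^{1,\gamma}$ on such a neighborhood for every $1<\gamma<4p/(p+4)$, and hence by the trace theorem $u_n|_{\mathcal{O}(K)}\in W^{1-1/\gamma,\gamma}(\mathcal{O}(K))$, yielding part (i). For part (ii), the hypothesis $p>4$ gives $4p/(p+4)>2$, so one can take $\gamma>2$; the Sobolev embedding $W^{1-1/\gamma,\gamma}\hookrightarrow H_{1-2/\gamma}$ then delivers $u_n\in H_s(\mathcal{O}(K))$ for $0<s<(p-4)/(2p)$, in particular $\|u_n\|_{L^\infty(\mathcal{O}(K))}\le C$, and \eqref{majphinp} is immediate from \eqref{inegmuu}.

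The main obstacle I anticipate is the careful bookkeeping to transfer the Sobolev regularity of $\mu'$ (or of $S$) through the product $(\phi_n)_*\mu'$ despite the lack of any Sobolev control on $\phi_n$ itself; the precise exponent $4p/(p+4)$ should emerge from interpolation between the H\"older/Sobolev scale of the smooth factor $S$ and the bare $L^2$-scale carrying $\phi_n S$. A secondary subtlety is that $\psi_n$ admits no direct $L^2(\mu)$-bound, so the $\bar\partial$-argument must go through the Cauchy-integral representation of $u_n$ rather than through the Herglotz identity above, which would otherwise demand an independent estimate on $(\psi_n)_*$.
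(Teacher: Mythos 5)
Your starting point is sound and matches the paper's in spirit: you work from the Cauchy representation \eqref{Hermitebisc}, split off the integral over $\TT\setminus I_0$ (which is indeed smooth near $K$ and uniformly bounded via Cauchy--Schwarz and $\|\phi_n\|_{L^2(\mu)}=1$), factor $\mu'=|S|^2$ and exploit $\|\phi_n S\|_{H^2(\DD)}\le1$, and you correctly anticipate both the exponent $4p/(p+4)$ and the final Sobolev embedding. You also correctly diagnose why the identity $zu_n=F_\mu(\phi_n)_*-(\psi_n)_*$ is unusable here.

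The genuine gap is in the step meant to lift $L^r$ control of the boundary density $(\phi_n)_*\mu'$ on $I_0$ to $W^{1-1/\gamma,\gamma}$ regularity of $u_n$. Your $\bar\partial$-scheme writes $\bar\partial(\chi u_n)$ as an off-boundary term plus a ``jump term across $\TT$ proportional to $\chi\cdot(\phi_n)_*\mu'$''. But that jump term is a measure supported on $\TT$, and feeding it back into the Cauchy--Green (Pompeiu) formula just reproduces the boundary Cauchy integral you started with: there is no smoothing. A boundary Cauchy integral with $L^r(I_0)$ density gives boundary-value regularity of class $L^r$, by the M.~Riesz theorem, not $W^{1-1/\gamma,\gamma}$ for some $\gamma$ that can exceed $r$ (and indeed, for $p>4$, you need $\gamma>2$ while your density is only $L^2$). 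You flag this very issue as your ``main obstacle'' and suggest the exponent ``should emerge from interpolation'', but the plan as stated offers no mechanism for the gain in regularity.

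The paper resolves this with a Stokes-theorem trick that you are missing. Reflect $S$ outside the disk: $G(z)=S(1/\bar z)$, $H(z)=\overline{S(1/\bar z)}$ on the reflected region $\Omega_1$. Then $H$ and $(\phi_n)_*$ are \emph{analytic} on $\Omega_1$ (so $\bar\partial$ kills them), while $G\in W^{1,p}(\Omega_1)$ by Lemma~\ref{SobSmu}, so $\bar\partial G\in L^p(\Omega_1)$. Applying Stokes' theorem to $GH(\phi_n)_*(\eta)/(\eta(\eta-z))\,d\eta$ over a domain bounded by $J$ and an exterior contour $\mathcal{C}$ converts the boundary Cauchy integral over $J$ (the singular one) into: (a) a contour integral over $\mathcal{C}$, harmless since $\mathcal{C}$ stays at positive distance from $K$, plus (b) the \emph{solid} Cauchy transform over $\Omega_1$ of $v=(\partial G/\partial\bar\eta)\,H(\phi_n)_*/\eta$. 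The solid Cauchy transform maps $L^\gamma(\Omega_1)\to W^{1,\gamma}(\Omega_1)$ (via the Beurling transform), and $v\in L^\gamma$ for $1/\gamma=1/p+1/\beta$ by H\"older, with $H(\phi_n)_*\in L^\beta(\Omega_1)$ for $2<\beta<4$ from the Fej\'er--Riesz estimate and the area embedding $H^2(\DD)\hookrightarrow L^\beta(\DD)$. This is where the exponent $4p/(p+4)$ comes from, and it is the two-dimensional Cauchy transform, not a boundary one, that supplies the derivative gain. Without this conversion your argument cannot reach part~\emph{i)}, and consequently not \emph{ii)} either.
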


\begin{proof}
We may assume that $K\neq\TT$, otherwise 
the conclusion follows upon writing $\TT=K_1\cup K_2$.
Let $J=(e^{i\theta_1},e^{i\theta_2})$ be an open arc
compactly included in $I$ and containing  $K$.
Fix $0<\varepsilon<1$ and consider the radial segments
 $c_1:=[e^{i\theta_1},(1+\varepsilon)e^{i\theta_1}]$, $c_2:= [(1+\varepsilon)e^{i\theta_2},e^{i\theta_2}]$, and the circular arc
$c_3=\{(1+\varepsilon)e^{i\theta} : \theta_1\le \theta\le \theta_2\}$.
Let $\mathcal{C}=c_1\cup c_3\cup c_2$ be the open contour joining 
$e^{i\theta_1}$ to $e^{i\theta_2}$.  
Orient the Jordan curve
$\Gamma=\mathcal{C}\cup J$ counterclockwise, and let $\Omega_1$ denote 
its interior. Put  
$\Omega=\{z\in\DD;\,1/\bar{z}\in\Omega_1\}$ for the reflected set.
Lemma \ref{SobSmu} implies that $S\in W^{1,p}(\Omega)$, 
hence $G(z):=S(1/{\bar z})$ and $H(z):=\overline{S(1/{\bar z})}$ 
belong to $W^{1,p}(\Omega_1)$.
By a classical estimate 
\cite[Theorem 5.4]{Duren},  $H^2(\DD)$ embeds continuously in 
$L^\beta(\DD)$ for $2<\beta<4$. {\it A fortiori} then, $S\in L^\beta(\Omega)$ 
whence $G,H\in L^\beta(\Omega_1)$ by reflection. In particular,
from the Leibnitz rule and H\"older's inequality, it follows since $p>4/3$
that $GH(z)=|S(1/{\bar z})|^2$ lies in 
$W^{1,\alpha}(\Omega_1)$ for some $\alpha>1$.
Pick $z\notin\overline{\Omega_1}$ and
apply Stokes' theorem on $\Gamma$ to the differential form 
$GH(\eta)(\phi_n)_*(\eta)/(\eta(\eta-z))\,d\eta$:
\begin{equation}\label{e101}
\int_{\mathcal{C}\cup J} 
\frac{GH(\eta)(\phi_n)_*(\eta)}{\eta}\,\frac{d\eta}{\eta-z}
=-\int_{\Omega_1} \frac{(\partial G/\partial{\bar \eta})(\eta)}
{\eta-z}
\frac{H(\phi_n)_*(\eta)}{\eta}
\,d\eta\wedge d{\bar{\eta}},
\end{equation}
where we took into account that $H(\phi_n)_*(\eta)/(\eta(\eta-z))$ 
is analytic on $\Omega_1$, since $H$ and $(\phi_n)_*$ are analytic on 
$\overline{\CC}\setminus\overline{\DD}$ while $0,z\notin \overline{\Omega_1}$.

As $GH(\xi)d\xi/\xi=id\mu(\xi)$ on $\bar J$ because  
$\supp\mu_s\cap I=\emptyset$ 
by assumption,   
we deduce from \eqref{e101} and \eqref{Hermitebisc} that, for $z\in\DD$
\begin{equation}
\label{ecartsing}
\begin{array}{lll}
u_n(z)&=&{\displaystyle 2\int_{\TT\setminus J}
(\phi_n)_*(\xi)\,\frac{d\mu(\xi)}{\xi-z}
-{2i}\int_{\mathcal{C}}
\frac{GH(\xi)(\phi_n)_*(\xi)}{\xi}\,\frac{d\xi}{\xi-z}}\\
&-&{\displaystyle {2i}
\int_{\Omega_1} \frac{(\partial G/\partial{\bar \eta})(\eta)}
{\eta-z}
\frac{H(\phi_n)_*(\eta)}{\eta}
\,d\eta\wedge d{\bar{\eta}}.}
\end{array}
\end{equation}
 
On any $\cO(K)$ where $z$ remains at strictly positive 
distance from  $\TT\setminus J$,
the first integral in the right-hand side of \eqref{ecartsing}
is uniformly bounded and smooth 
by the Schwarz inequality because $||(\phi_n)_*||_{L^2(d\mu)}=1$
(recall that $|(\phi_n)_*|=|\phi_n|$ on $\TT$).

Next, since $\phi_nS\in H^2(\DD)$ and
$||\phi_n S||= ||\phi_n||_{L^2(d\mu_{ac})}\leq 1$, it follows from the 
Fej\`er-Riesz inequality \cite[Theorem 3.13]{Duren} that the
$L^2$-norm of $\phi_n S$ over any diameter of $\DD$ is at most $1/\sqrt 2$.
Also, the $L^2$-norm of $\phi_nS$ over the circle centered at zero
of radius $1/(1+\varepsilon)$ is less than 1. 
Hence, by reflection across $\TT$, the $L^2$-norm of $H(\phi_n)_*$ on 
$\mathcal{C}$ is uniformly bounded.
Moreover, since $G\in W^{1,p}(\Omega_1)$, the trace theorem implies
that its restriction to $\mathcal{C}$
lies in $W^{1-1/p,p}(\mathcal{C})$.
By the embedding theorem for Besov spaces \cite[Theorem 7.34]{Adams},
this restriction  belongs\footnote{It belongs in fact to the Lorentz space
$L^{p/(2-p),p}(\mathcal{C})$ that we did note introduce; the latter is 
included in $L^{p/(2-p)}(\mathcal{C})$ 
because $p<p/(2-p)$ since $p>1$, see
\cite[Lemma 1.8.13]{Ziemer}.}
to $L^{p/(2-p)}(\mathcal{C})$ 
if $p<2$, to each $L^q(\mathcal{C})$ with $1\leq q<\infty$  if $p=2$,
and it is bounded if $p>2$. 
Thus from  H\"older's inequality, it follows
since $p>4/3$ that the $L^2(\mathcal{C})$-norm of
$G|_{\mathcal{C}}$ it at most $C\|G\|_{W^{1,p}(\Omega_1)}$, where $C$
is a constant depending only of $p$ and $\mathcal{C}$.
Consequently, by the Cauchy-Schwarz inequality,
the second integral in the right-hand side of (\ref{ecartsing})
is  uniformly bounded and smooth on any  $\cO(K)$ remaining 
at positive distance from $\mathcal{C}$.

We turn to the third integral, which is
taken with respect to two-dimensional Lebesgue measure
since $d\eta\wedge d{\bar \eta}=-2idxdy$.  For  $z\in\Omega_1$, observe that  the function
\[V(z)=\int_{\Omega_1}\frac{v(\eta)}{\eta-z}d\eta\wedge d{\bar \eta}
\]
lies in $W^{1,\gamma}(\Omega_1)$ 
whenever $v\in L^\gamma(\Omega_1)$ with $1<\gamma<\infty$. 
Indeed, if $d$ is the diameter of $\Omega_1$, it holds that
$\|V\|_{L^\gamma(\Omega_1)}\leq 6d\|v\|_{L^\gamma(\Omega_1)}$ 
\cite[Theorem 4.3.12]{Astala} while the distributional derivatives
$\partial V/\partial \bar{z}$ and $\partial V/\partial z$ equal respectively
to $v$ and the restriction to $\Omega_1$ of $\mathcal{S}\check{v}$, 
where $\check{v}$ is the extension of $v$ by 0 to the whole of $\CC$ and
$\mathcal{S}$ indicates the 
Beurling transform \cite[Theorem 4.3.10]{Astala}. Since the latter is 
a bounded operator on $L^\gamma(\CC)$ \cite[Theorem 4.5.3]{Astala}, it follows 
that $V\in W^{1,\gamma}(\Omega_1)$ with norm depending on $\Omega_1$ and
$\|v\|_{L^\gamma(\Omega_1)}$.
Apply this to
\[v(\eta)=\frac{H(\eta)(\phi_n)_*(\eta)}{\eta}
\, \frac{\partial G}{\partial{\bar \eta}}(\eta)\]
so that $V$ becomes 
the third integral in \eqref{ecartsing}, up to the factor $-2i$.
On the one hand, $G\in W^{1,p}(\Omega_1)$ so that
$\partial G/\partial{\bar \eta}\in L^p(\Omega_1)$.
On the other hand, we pointed out already that 
$H^2(\DD)$ embeds in $L^\beta(\DD)$ for $2<\beta<4$, hence
the $L^\beta(\Omega)$-norm of $\phi_nS$ is uniformly bounded and so is
the $L^\beta(\Omega_1)$-norm of $H(\phi_n)_*$ by reflection.
Therefore by
H\"older's inequality, $v\in L^\gamma(\Omega_1)$ with
$1/\gamma=1/p+1/\beta$. 
As $p>4/3$, this allows us to pick $\gamma$ arbitrarily in the range
$(1,4p/(p+4))$,
and assertion $i)$ now follows from the 
trace theorem.

If $p>4$ we can pick $2<\gamma<4p/(p+4)$, so assertion $ii)$ is a 
consequence of \eqref{inegmuu} and the fact that $W^{1-1/\gamma,\gamma}(I)$
embeds continuously in $H_{1-2/\gamma}(I)$.
\end{proof} 

The importance of the above proposition lies with the 
fact that the bounds are independent of $n$ and $(\alpha_k)$,
except for the presence of $P(.,\alpha_n)$ in \eqref{majphinp} 
(which reduces to 1 in the classical case). 

It is useful to know conditions on 
$\mu'=|S|^2$ for Proposition \ref{bornephin} to apply.
Here is a simple criterion. 
\begin{lemma}
\label{critmu}
Let $1<p<\infty$ and $\mu\in\sz$. For $I\subset\TT$ an open arc, if
$\mu'|_{I}\in W^{1-1/p,p}(I)$ and $0<\delta\leq \mu'(t)\leq M<\infty$
on $I$, it holds that $S|_{J}\in W^{1-1/p,p}(J)$ for each relatively compact 
subarc $J\subset I$.
\end{lemma}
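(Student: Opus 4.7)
The strategy is to prove $S|_J \in W^{1-1/p,p}(J)$ through the multiplicative factorization $S = |S|(\cos\varphi + i\sin\varphi)$ on $\TT$, where $\varphi$ denotes the boundary imaginary part of $\log S$ from \eqref{e081}, i.e.\ $\tfrac12$ times the conjugate function of $\log \mu'$ (in the sense of Section \ref{ss03}). The reason to avoid the more direct $S=e^{\log S}$ is that when $1<p\le 2$ the space $W^{1-1/p,p}(J)$ does not embed in $L^\infty(J)$, so $\log S|_J$ need not be bounded and $\exp$ fails to be globally Lipschitz on its range; the multiplicative form, by contrast, only needs compositions with the bounded Lipschitz functions $\cos$ and $\sin$, together with $\log$ and $\sqrt{\,\cdot\,}$ restricted to $[\delta,M]$.

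The key elementary principle, immediate from the characterization \eqref{defW1-1/p}, is that if $F\colon\mathbb{R}\to\mathbb{R}$ is Lipschitz on the range of $u\in L^\infty(J)\cap W^{1-1/p,p}(J)$, then $F\circ u$ lies in the same class, and moreover $L^\infty(J)\cap W^{1-1/p,p}(J)$ is a pointwise algebra via the bound
\[
|f(t)g(t)-f(t')g(t')|\le \|f\|_\infty|g(t)-g(t')|+\|g\|_\infty|f(t)-f(t')|.
\]
Applying the Lipschitz-composition principle with $F=\sqrt{\,\cdot\,}$ on $[\delta,M]$ yields $|S|=(\mu')^{1/2}\in L^\infty(J)\cap W^{1-1/p,p}(J)$. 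Once $\varphi|_J\in W^{1-1/p,p}(J)$ is established, applying it to the globally Lipschitz and bounded $\cos,\sin$ gives $\cos\varphi,\sin\varphi\in L^\infty(J)\cap W^{1-1/p,p}(J)$, and the product rule then assembles $S|_J\in W^{1-1/p,p}(J)$.

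Thus the crux reduces to showing $\varphi|_J\in W^{1-1/p,p}(J)$, that is, transferring the Sobolev regularity of $\log \mu'$ on $I$ (valid because $\log$ is Lipschitz on $[\delta,M]$, so the first principle above applies) to its global conjugate restricted to $J$. I would pick subarcs $J\Subset J'\Subset I$ and a smooth cutoff $\chi\in C^\infty(\TT)$ with $\chi\equiv 1$ on $J'$ and $\supp\chi\Subset I$. Then $\chi\log \mu'$, extended by zero, lies in $W^{1-1/p,p}(\TT)$, and its conjugate lies in the same space by the boundedness of the conjugation operator on $W^{1-1/p,p}(\TT)$ recalled in Section \ref{ss03}. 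The complementary piece $(1-\chi)\log \mu'$ vanishes on $J'$; its conjugate on $J$ is thus given by a nonsingular integral whose kernel $\cot\bigl(\tfrac{\theta-\phi}{2}\bigr)$ is smooth in $\theta$ when $e^{i\phi}$ remains outside a neighborhood of $J$, hence is $C^\infty$ on $J$ by differentiation under the integral sign. The main technical obstacle is precisely this cutoff-and-complement step, which localizes a nonlocal singular integral operator; the rest of the proof is routine algebra in fractional Sobolev spaces.
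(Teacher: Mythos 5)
Your proof is correct, but the route differs from the paper's, and your stated reason for the detour is not quite right. The paper extends $\mu'$ (or rather $\sqrt{\mu'}$; the printed formula has a small slip) across the endpoints of $I$ to a function $\varphi\in W^{1-1/p,p}(\TT)$ bounded above and below, forms $H=\log\varphi+i\check{(\log\varphi)}\in W^{1-1/p,p}(\TT)$ using the boundedness of conjugation on that space, and then sets $S_1=\exp H$. The crucial point --- which answers your objection --- is that $\exp$ is Lipschitz on the closed left half-plane $\{\mathrm{Re}\,z\le\log M'\}$, which contains the range of $H$ since $\mathrm{Re}\,H=\log\varphi\le\log M'$, \emph{even though} $\mathrm{Im}\,H$ need not be bounded when $1<p\le 2$: the Lipschitz constant of $\exp$ on a half-plane depends only on the upper bound for the real part, as $|\exp'(z)|=e^{\mathrm{Re}\,z}$ and the half-plane is convex. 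Hence $S_1\in W^{1-1/p,p}(\TT)$ directly by the Lipschitz-composition principle, for all $1<p<\infty$. The paper then concludes by noting that $S/S_1$ extends analytically across $I$ (an outer-function argument as in \eqref{exev}), so that $S=(S/S_1)S_1\in W^{1-1/p,p}(J)$. Your alternative --- factoring $S=|S|(\cos\varphi+i\sin\varphi)$ and localizing the conjugation applied to $\log\mu'$ by a cutoff $\chi$ so that the nonlocal piece $\check{((1-\chi)\log\mu')}$ becomes a smooth kernel integral on $J$ --- is valid and more self-contained, avoiding both the global Sobolev extension across $\partial I$ and the outer-function quotient step, at the cost of handling the localization of the Hilbert transform by hand. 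Both proofs ultimately rest on the same two ingredients: Lipschitz compositions preserve $W^{1-1/p,p}$, and the conjugation operator is bounded on $W^{1-1/p,p}(\TT)$.
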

\begin{proof}
Let $\varphi\in W^{1-1/p,p}(\TT)$ coincide with $\mu'/2$ on $I$ with
$0<\delta'\leq \varphi\leq M'<\infty$; such an extension is easily constructed
by reflexion across the endpoints of $I$, see \cite[Theorem 1.5.2.3]{Grisvard}.
As $\varphi\geq\delta'>0$, we get that $\log \varphi\in W^{1-1/p,p}(I)$ for
$\log$ is Lipschitz continuous on the range of $\varphi$. Therefore
$H=\log \varphi+i\check{(\log \varphi)} \in W^{1-1/p,p}(\TT)$,
and so does $S_1=\exp H$ because $\exp$ is Lipschitz continuous on the 
range of $H$ since $\varphi\leq M'$. Now, $S_1$ is an outer function 
having the same modulus as $S$ on $I$, therefore we see as in 
\eqref{exev} that $S/S_1$ extends analytically across $I$. This entails
that $S\in W^{1-1/p,p}(J)$ whenever $J$ is relatively compact in $I$.
\end{proof}

It is straightforward to check that the product of 
bounded $W^{1-1/p,p}(I)$-functions again lies
in $W^{1-1/p,p}(I)$. Thus if $\mu'$ satisfies the conditions of
Lemma \ref{critmu}, then the conclusion still holds for
$\mu'_1(t)=\mu'(t)\Pi_{j=1}^N|t-t_j|^{\lambda_j}$ 
where $t_1,\ldots t_N\in I$ are distinct, and either $\lambda_j\geq 2$
or $\lambda_j>2(p-1)/p$ for all $j$, because
the (normalized) outer function with modulus $|t-t_j|^{\lambda_j/2}$
is just $(t-t_j)^{\lambda_j/2}$, where the branch of the  power
$\lambda_j/2$  is positive for positive arguments.
For instance $d\mu(\xi)=|1-\xi|^2dm(\xi)$ 
provides us with an example for which 
\eqref{majphinp} holds uniformly on $\TT$ although $\mu'(1)=0$.

\subsection{Convergence of ORFs for Szeg\H o measures} 
In this subsection, we assume as always that $\mu$ is a finite and positive
measure with infinite support on $\TT$, but we
no longer require it has unit mass. The ORFs and the associated
Carath\'eodory and Szeg\H o functions are defined as before. Because 
multiplying $\mu$ by $\lambda>0$ results in
the multiplication of $\phi_n$ by $\lambda^{-1/2}$ and of $S$ 
by $\lambda^{1/2}$, the results below are invariant under such
scalings.

Observe  that, similarly to the classical situation,
%
the ORF $\phi_n$ solves
the extremal problem
\begin{equation}
\label{EP1}
\max_{\xi_n\in\cL_n,\ ||\xi_n||_\mu\le 1 }\big\{|a_{n,n}|:\ \xi_n=a_{n,n} 
\cB_n+a_{n,n-1} \cB_{n-1}+\dots+a_{n,0}\cB_0\big\}.
\end{equation}
We denote the value of the problem by $\kappa_n=\kappa_n[\mu]$,
{\it i.e.}  $\kappa_n=|\phi_n^*(\alpha_n)|$. 

The extremal property  \eqref{EP1} can also be recast as 
\begin{equation}
\label{EP2}\kappa_n^{-1}=
\min_{\xi_n\in\cL_n,\ \xi_n(\alpha_n)= 1 }\|\xi_n\|_{\mu},
\end{equation}
where the extremal value is uniquely attained at 
$\xi_n=\phi^*_n/\phi_n^*(\alpha_n)$.

The Szeg\H o-type theorem we shall prove 
deals with the asymptotic behavior of 
$\kappa_n$ as $n\to+\infty$, which entails further asymptotics for $\phi_n^*$.

The statement is as follows.
\begin{theorem}\label{c04}
Let \eqref{e04}, \eqref{e08}-\eqref{e083} be in force, 
and $\mu\in\sz$.  Then
\begin{equation}
\label{thmpS}
\lim_{n} |\phi_n^*(\alpha_n)|^2 |S(\alpha_n)|^2 (1-|\alpha_n|^2) = 1.
\end{equation}
If $(\alpha_k)$ accumulates nontangentially on
$Acc(\alpha_k)\cap\TT$, then one can replace  relations
\eqref{e08} and \eqref{e082} with \eqref{e082p}.
\end{theorem}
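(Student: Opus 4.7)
The plan is to prove $\liminf_n T_n\geq 1$, where $T_n := R_n(\alpha_n,\alpha_n)/E_n[\mu_{ac}](\alpha_n,\alpha_n)$, since Corollary \ref{borne1} already gives both the identification $T_n=|\phi_n^*(\alpha_n)|^2|S(\alpha_n)|^2(1-|\alpha_n|^2)$ and the upper bound $T_n\leq 1$. Combining \eqref{EP2} with Proposition \ref{E_n}, one has
\[
T_n \;=\; \frac{|S(\alpha_n)|^2(1-|\alpha_n|^2)}{\inf\{\|\xi_n\|_\mu^2\,:\,\xi_n\in\cL_n,\ \xi_n(\alpha_n)=1\}},
\]
so it suffices to exhibit admissible competitors $\xi_n$ satisfying
\[
\limsup_n \frac{\|\xi_n\|_\mu^2}{|S(\alpha_n)|^2(1-|\alpha_n|^2)}\leq 1.
\]
A subsequence argument lets one assume $\alpha_n\to\alpha\in\overline{\DD}$; the case $\alpha\in\DD$ reduces to the compactly supported setting already covered by \cite[Thm.~9.6.9]{Bultheel1999}, so I would concentrate on $\alpha\in Acc(\alpha_k)\cap\TT$.

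Under \eqref{e08}-\eqref{e082}, $S$ is continuous and bounded below by some $\delta>0$ on a neighborhood $\cO$ of $Acc(\alpha_k)\cap\TT$. Lemma \ref{lemmix} then yields $R_m\in\cL_m$ with $\|1-R_mS\|\to 0$ and $R_mS\to 1$ uniformly on a compact $K\subset\cO$ containing $\alpha$. For $n>m$ with $\alpha_n\in K$, I would set
\[
\xi_n(\xi)\,:=\,\frac{R_m(\xi)}{R_m(\alpha_n)}\cdot\frac{1-|\alpha_n|^2}{1-\xi\bar\alpha_n},
\]
which, after clearing denominators, lies in $\cL_n$ and satisfies $\xi_n(\alpha_n)=1$. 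Next I would pick an open arc $J$ with $\alpha\in J\subset\overline{J}\subset K$ and $\overline{J}\cap\supp\mu_s=\emptyset$ (possible by \eqref{e083}) and split $\TT=J\cup(\TT\setminus J)$. On $J$ one has $d\mu=|S|^2\,dm$ together with $P(\xi,\alpha_n)=(1-|\alpha_n|^2)/|1-\xi\bar\alpha_n|^2$, so
\[
\int_J|\xi_n|^2\,d\mu\,=\,\frac{1-|\alpha_n|^2}{|R_m(\alpha_n)|^2}\int_J|R_mS|^2 P(\xi,\alpha_n)\,dm(\xi)\,\leq\,\frac{(1+\eta_m)(1-|\alpha_n|^2)}{|R_m(\alpha_n)|^2},
\]
with $\eta_m\to 0$ independently of $n$ (using $\int_\TT P(\cdot,\alpha_n)\,dm=1$ and uniform convergence of $|R_mS|^2$ to $1$ on $J$). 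On $\TT\setminus J$ the quantity $|1-\xi\bar\alpha_n|$ is uniformly bounded away from $0$ for $n$ large, and $\|R_m\|_\mu$ is a finite $m$-dependent constant, so this contribution is $O_m((1-|\alpha_n|^2)^2)$. Dividing the total by $|S(\alpha_n)|^2(1-|\alpha_n|^2)$ and using $|R_m(\alpha_n)S(\alpha_n)|\to 1$ uniformly on $K$, one gets
\[
\limsup_n\frac{\|\xi_n\|_\mu^2}{|S(\alpha_n)|^2(1-|\alpha_n|^2)}\,\leq\,1+o_m(1),
\]
and letting $m\to\infty$ completes the proof.

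The central difficulty of the argument is concentrated in Lemma \ref{lemmix}, which refines Beurling's density-of-outer-multiples theorem to produce $\cL_m$-functions that simultaneously control the $L^2$-norm of $1-R_mS$ and approximate $1/S$ uniformly in a neighborhood of $Acc(\alpha_k)\cap\TT$; without this uniform control at $\alpha_n$ one cannot normalize $\xi_n(\alpha_n)=1$ efficiently. The role of \eqref{e083} is to prevent the singular part of $\mu$ from spoiling the integral on $J$. For the nontangential variant under \eqref{e082p}, where $S$ need not extend continuously across $\cO$, the uniform convergence of $R_mS$ to $1$ at $\alpha_n$ is replaced by nontangential convergence at the Lebesgue points of $\log\mu'$, with the two-sided bound $0<\delta<\mu'<M$ on $\cO$ supplying the dominated-convergence estimates needed to push the same scheme through.
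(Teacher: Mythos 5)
Your strategy — establishing $\liminf_n T_n \geq 1$ by exhibiting competitors in the extremal problem \eqref{EP2} constructed from Lemma~\ref{lemmix} — is close in spirit to the paper's Proposition~\ref{smoothSzego}, and your treatment of the singular part (folding it into the $O_m\bigl((1-|\alpha_n|^2)^2\bigr)$ tail over $\TT\setminus J$ rather than via Lemma~\ref{appi} and Proposition~\ref{smoothsingSzego}) is an appealing simplification. However, there is a genuine gap at the very first step.

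You assert that \eqref{e08}--\eqref{e082} make $S$ continuous and bounded below on a neighborhood $\cO$ of $Acc(\alpha_k)\cap\TT$. That does not follow: these hypotheses control $\mu'=|S|^2$, hence $|S|$, but the argument of $S$ is (one half of) the conjugate function of $\log\mu'$, and the conjugate of a continuous function need not be continuous, nor even bounded. The paper itself exhibits, just after Corollary~\ref{Szegot}, a Carath\'eodory function with continuous, strictly positive real part on $\TT$ whose imaginary part is unbounded; the same phenomenon occurs for $\arg S$. Without continuity of $S$, Lemma~\ref{lemmix} is not applicable — $S$ continuous with $|S|>\delta$ is a \emph{hypothesis} of that lemma, not a consequence of \eqref{e08}--\eqref{e082} — and you cannot justify $|R_m(\alpha_n)S(\alpha_n)|\to1$ as $n\to\infty$ (for fixed $m$), which your normalization of $\xi_n$ relies on. This is exactly the obstacle the paper sidesteps by first proving the result for densities in $W^{1-1/p,p}$ (Propositions~\ref{smoothSzego}, \ref{smoothsingSzego}, where Lemma~\ref{critmu} yields $S$ H\"older continuous) and then, in the proof of Theorem~\ref{c04} proper, approximating $\mu'$ by the smooth $h_r+\varepsilon$ and passing to the limit using the monotonicity of $\kappa_n[\cdot]$ together with the two-sided bound \eqref{limSzeps} on the ratio of Szeg\H o functions. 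Your proposal is missing that smoothing/limiting layer. Two secondary issues: the reduction for $\alpha_n\to\alpha\in\DD$ to \cite[Thm.~9.6.9]{Bultheel1999} is not immediate, since $\phi_n$ is built from \emph{all} of $\alpha_1,\dots,\alpha_n$, which need not be compactly supported just because a tail converges to $\alpha\in\DD$ (the competitor estimate actually works in that case too, and avoids the citation, but then the tail contribution on $\TT\setminus J$ is only $O_m(1)$, not $o(1)$, so another argument is required); and the nontangential variant under \eqref{e082p} is sketched only as "dominated convergence," whereas the paper needs a decreasing sequence of continuous majorants of $\mu'$ and Fatou's theorem for the quotient of Szeg\H o functions — merely having $0<\delta<\mu'<M$ on $\cO$ does not rescue the continuity of $S$ at $\alpha$.
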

\noindent
 
The proof of Theorem \ref{c04}
requires several steps. We first look at  smooth 
measures.   

\begin{proposition}
\label{smoothSzego}
Assume \eqref{e04} holds and $\mu\in\sz$ is absolutely continuous. 
If there is an open  neighborhood 
$\cO$ of $Acc(\alpha_k)\cap\TT$
where $\mu'\geq\delta>0$ with
$\mu'\in W^{1-1/p,p}(I)$ for each component $I$
of  $\mathcal{O}$, $p>2$, then
\eqref{thmpS} is valid.
\end{proposition}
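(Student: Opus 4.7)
The plan is to establish \eqref{thmpS} by a sandwich argument: the upper bound $\le 1$ is already supplied by Corollary \ref{borne1}, so only the reverse inequality needs work. For this, I would invoke the extremal characterization \eqref{EP2},
$$|\phi_n^*(\alpha_n)|^{-2}=\min_{\xi\in\cL_n,\ \xi(\alpha_n)=1}\|\xi\|^2_{L^2(\mu)},$$
and produce near-optimal competitors. The unconstrained $H^2$-version of the same extremal problem equals $(1-|\alpha_n|^2)|S(\alpha_n)|^2$ and is attained at $\xi^*(z)=(1-|\alpha_n|^2)S(\alpha_n)/((1-\bar\alpha_nz)S(z))$, so the game is to approximate $1/S$ by an element of $\cL_m$ for some $m<n$.

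To implement this, observe that the hypotheses put us in position to apply Lemma \ref{lemmix}: Lemma \ref{critmu} plus the Sobolev embedding (using $p>2$) make $S$ continuous on $\cO$ with $|S|\ge\sqrt\delta$, and $S$ is outer by definition. I would fix a compact neighborhood $K\subset\cO$ of $Acc(\alpha_k)\cap\TT$ and apply Lemma \ref{lemmix} to obtain $R_m\in\cL_m$ with $\|R_mS-1\|\to 0$ and $R_mS\to 1$ uniformly on $K$. For $n>m$, set
$$\xi_{n,m}(z)=\frac{(1-|\alpha_n|^2)\,R_m(z)}{(1-\bar\alpha_nz)\,R_m(\alpha_n)};$$
the extra pole at $1/\bar\alpha_n$ is absorbed by $\pi_n$, so $\xi_{n,m}\in\cL_n$, and $\xi_{n,m}(\alpha_n)=1$. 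Writing $g_m:=R_mS-1\in H^2$, applying the Poisson identity $\int h\,P(\cdot,\alpha_n)\,dm=h(\alpha_n)$ for $h\in H^1$ to the expansion of $|R_mS|^2=|1+g_m|^2$, and using $|R_m(\alpha_n)|^2|S(\alpha_n)|^2=|1+g_m(\alpha_n)|^2$, one obtains the identity
$$\|\xi_{n,m}\|^2_{L^2(\mu)}=(1-|\alpha_n|^2)|S(\alpha_n)|^2\cdot\frac{1+2\mathrm{Re}\,g_m(\alpha_n)+\int|g_m|^2P(\cdot,\alpha_n)\,dm}{1+2\mathrm{Re}\,g_m(\alpha_n)+|g_m(\alpha_n)|^2}.$$

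The main obstacle is then to force this ratio to $1$ uniformly in $n$, which is delicate because the evaluation point $\alpha_n$ moves and $|\alpha_n|$ can approach $1$. Given $\epsilon>0$, I would first fix $m$ large enough that $\|g_m\|_{L^\infty(K)}<\epsilon$ and $\|g_m\|<\epsilon$, and split both Poisson integrals along $\TT=K\cup(\TT\setminus K)$. The $K$-parts are directly bounded by $\epsilon$ and $\epsilon^2$ respectively since $\int_KP(\cdot,\alpha_n)\,dm\le 1$. For the $(\TT\setminus K)$-parts, I would pass to any subsequence along which $\alpha_n\to\alpha\in\overline\DD$: when $\alpha\in\DD$, the $L^2$-convergence $g_m\to 0$ yields locally uniform convergence in $\DD$, killing $g_m(\alpha_n)$ for large $m$; when $\alpha\in K\cap\TT$, $|\alpha_n|\to 1$ while $\mathrm{dist}(\alpha_n,\TT\setminus K)$ stays bounded below by some $\delta_0>0$, so $\sup_{\TT\setminus K}P(\cdot,\alpha_n)\le C(1-|\alpha_n|^2)\to 0$ and the complementary integrals vanish in the limit. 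Consequently both $|g_m(\alpha_n)|$ and $\int|g_m|^2P(\cdot,\alpha_n)\,dm$ are $O(\epsilon)+o_n(1)$, which yields $\liminf_n|\phi_n^*(\alpha_n)|^2|S(\alpha_n)|^2(1-|\alpha_n|^2)\ge(1+O(\epsilon))^{-1}$; letting $\epsilon\to 0$ together with Corollary \ref{borne1} concludes the proof.
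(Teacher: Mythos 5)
Your proposal is correct, and it rests on the same key ingredient as the paper's own argument, namely the construction via Lemma \ref{lemmix} of near-inverses $R_m\in\cL_m$ to $S$ with simultaneous $L^2(\TT)$ and local $L^\infty$ control on a compact neighborhood $K$ of $Acc(\alpha_k)\cap\TT$. The bookkeeping, however, is organized differently. The paper starts from the reproducing-kernel identity of Corollary \ref{borne1}, $|\phi_n^*(\alpha_n)|^2|S(\alpha_n)|^2(1-|\alpha_n|^2)=R_n(\alpha_n,\alpha_n)/E_n[\mu_{ac}](\alpha_n,\alpha_n)$, recasts the problem as weighted $L^2$-approximation of $1/(1-\bar\alpha_n t)$ by $p_nS/\pi_n$, and uses the Pythagorean split \eqref{pb approx Szego} to isolate the leading $(1-|\alpha_n|^2)^{-1}$-contribution from a fluctuation term. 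You instead bypass the reproducing-kernel apparatus, plug the explicit competitor $\xi_{n,m}$ into the extremal characterization \eqref{EP2}, and derive a closed-form identity in which the deviation from the Szeg\H{o} constant is governed by the difference $\int|g_m|^2P(\cdot,\alpha_n)\,dm-|g_m(\alpha_n)|^2$; the Cauchy--Schwarz inequality for Poisson averages plays here the role of the Pythagorean decomposition in \eqref{pb approx Szego}. This is a legitimate and arguably more elementary presentation of the same underlying estimate.

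Two minor points to tighten. First, you divide by $R_m(\alpha_n)$, so one should observe that for $m$ large the function $R_m$ does not vanish near $Acc(\alpha_k)$ (because $R_mS\to 1$ uniformly on $K$ and locally uniformly in $\DD$), hence $R_m(\alpha_n)\neq 0$ for $n$ large. Second, when $\alpha_{n_j}\to\alpha\in\DD$ the bound $|g_m(z)|\le\|g_m\|/\sqrt{1-|z|^2}$ degrades as $|\alpha|\to1$, so the $m$ needed to make $|g_m(\alpha_{n_j})|$ small genuinely depends on the limit point $\alpha$; this is why the argument must be run, as you in fact indicate, as a compactness/subsequence step rather than with a single $m$ uniform in $n$.
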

\begin{proof}
Since $R_n(.,\alpha_n)$ is the orthogonal projection of $E_n(.,\alpha_n)$ 
on $\mathcal{P}_n(d\mu_n)$, $R_n(.,\alpha_n)$ is a polynomial of degree at 
most $n$ and the minimum
\[
 \min_{p_n \in \mathcal{P}_n} \| E_n(.,\alpha_n) - p_n \|_{L^2(d\mu_n)} 
\]
is attained exactly for $p_n=R_n(.,\alpha_n)$.
But 
$$
\| E_n(.,\alpha_n) - p_n \|^2_{L^2(d\mu_n)} = 
\int_{\mathbb{T}} \left|\frac{1} {1-\overline{\alpha_n} t} \frac{\overline{\pi_n(\alpha_n)}} {\overline{S(\alpha_n)}} - \frac{p_n(t) S(t)}{\pi_n(t)}  \right|^2 dm(t).
$$
Hence, the polynomial $P_n$ minimizing
\begin{equation}
 \min_{p_n \in \mathcal{P}_n} \left\|\frac{1} {1-\overline{\alpha_n} t}  - \frac{p_n(t) S(t)}{\pi_n(t)}  \right\|
\label{definition Pn Szego}
\end{equation}
provides us with $R_n(.,\alpha_n)$  through the relation
\begin{equation*}
R_n(.,\alpha_n) = \frac{\overline{\pi_n(\alpha_n)}} {\overline{S(\alpha_n)}} P_n. 
\end{equation*}
In view of (\ref{quantite ratio Rn En}),  we write
\begin{equation}
|\phi_n^*(\alpha_n)|^2 |S(\alpha_n)|^2 (1-|\alpha_n|^2) =\left|\frac{P_n(\alpha_n) S(\alpha_n)}{\pi_{n-1}(\alpha_n)}\right|.
\label{quantity Szego Pn}
\end{equation}
We also have for every polynomial $p_n$
\begin{eqnarray*}
\lefteqn{
\left\| \frac{1}{1 - \bar \alpha_n t} - \frac{p_n(t) S(t)}{\pi_n(t)} \right\|^2= \left\| \left(1 - \frac{p_n(t) S(t)}{\pi_{n-1}(t)}\right) \frac{1}{t- \alpha_n} \right\|^2}\\
& =  &\left\| \left(1 - \frac{p_n(\alpha_n) S(\alpha_n)}{\pi_{n-1}(\alpha_n)}\right) \frac{1}{t- \alpha_n} + \left( \frac{p_n(\alpha_n) S(\alpha_n)}{\pi_{n-1}(\alpha_n)} - \frac{p_n(t) S(t)}{\pi_{n-1}(t)} \right) \frac{1}{t- \alpha_n}  \right\|^2.
\end{eqnarray*}
Consequently,
\begin{eqnarray}
\left\| \frac{1}{1 - \bar \alpha_n t} - \frac{p_n(t) S(t)}{\pi_n(t)} \right\|^2 
& =&\left| 1 - \frac{p_n(\alpha_n) S(\alpha_n)}{\pi_{n-1}(\alpha_n)}\right|^2 \frac{1}{1- |\alpha_n|^2} \nonumber\\
&+& \left\|\left( \frac{p_n(\alpha_n) S(\alpha_n)}{\pi_{n-1}(\alpha_n)} - \frac{p_n(t) S(t)}{\pi_{n-1}(t)} \right) \frac{1}{t- \alpha_n}  \right\|^2. \label{pb approx Szego}
\end{eqnarray}
Thus, if there is a sequence of polynomials $(p_n)$  satisfying
\begin{equation}
\left\| \frac{1}{1 - \bar \alpha_n t} - \frac{p_n(t) S(t)}{\pi_n(t)} \right\|^2=o\left(\frac{1}{1-|\alpha_n|^2} \right), 
\label{lim pb approx Szego}
\end{equation}
then  we also have  (see (\ref{definition Pn Szego}))  
\[
\left\| \frac{1}{1 - \bar \alpha_n t} - \frac{P_n(t) S(t)}{\pi_n(t)} \right\|^2=o\left(\frac{1}{1-|\alpha_n|^2} \right), 
\]
and by (\ref{pb approx Szego})  
\[
\lim_{n} \frac{P_n(\alpha_n) S(\alpha_n)}{\pi_{n-1}(\alpha_n)}=1.
\]
In this case relation (\ref{quantity Szego Pn}) gives us the desired limit
\eqref{thmpS}.

Note that $\mu'$ is bounded on each component $I$ of $\cO$, since 
$W^{1-1/p,p}(I)$ consists of continuous functions for $p>2$. 
Hence $S$ is continuous on $\cO$ by Lemma \ref{critmu},
and meets the assumptions of Lemma \ref{lemmix}.
Let $K$ be a compact neighborhood of $Acc(\alpha_k)$ included in $\mathcal{O}$
and $R_n\in\cL_n$ be the 
sequence of rational functions given by the lemma. Put $R_n=p_n/\pi_n$.
As $\|1/(1-\bar{\alpha_n}t)\|^2=1/(1-|\alpha_n|^2)$, we get  
\begin{eqnarray*}
\left\| \frac{1}{1 - \bar \alpha_n t} - \frac{p_{n-1}(t) S(t)}{\pi_n(t)} \right\|^2&\le& 
\sup_{t\in \TT\bsl K} \frac 1{|1-\overline{\alpha}_nt|^2}\  \left\|1 - \frac{p_{n-1} S}{\pi_{n-1}}\right\|^2\\
&+&\frac 1{1-|\alpha_n|^2}\ \left\|1 - \frac{p_{n-1} S}{\pi_{n-1}}\right\|^2_{L^\infty(K)}.
\end{eqnarray*}
Since $K$ is a neighborhood of $Acc(\alpha_k)$,  the above supremum is bounded and the first summand in the 
right-hand side of the equation goes to zero as $n\to\infty$
by the properties of $R_{n}$. As to the second summand, it is  $o(1/(1-|\alpha_n|^2))$ since $R_{n-1}S$ converges  to 1 uniformly on $K$.
Therefore the sequence $(p_{n-1})$ satisfies (\ref{lim pb approx Szego})
whence \eqref{thmpS} holds.
\end{proof}
The assumption on $\mu'$ was only to ensure that $S$ is continuous on $\cO$. If this is known to be
the case, it is not needed.

The next proposition will sharpen Proposition \ref{smoothSzego} in that
it allows $\mu$ to have a singular part.
We need a preparatory lemma.
 
\begin{lemma}
\label{appi}
Assume \eqref{e04} holds and let $E\subset\TT,\, |E|=0,$ have an open 
neighborhood $\mathcal{U}$ in $\CC$ such that  
$\overline{\mathcal{U}}\cap Acc(\alpha_k)=\emptyset$.
Then, to every $\varepsilon>0$, there exists an integer $n_0$ and 
$R_{n_0}\in\cL_{n_0}$ such that
\begin{itemize}
\item[\it (i)\  \  ] 
$|R_{n_0}|\le 2+\varepsilon$ on $\TT$,
\item[\it (ii) ] 
$|1-R_{n_0}|\leq \varepsilon$ on $\TT\bsl\mathcal{U}$,
\item[\it (iii)] $|R_{n_0}|\leq \varepsilon$ on $E$,
\end{itemize}
\end{lemma}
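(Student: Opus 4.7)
The strategy is to first construct an auxiliary function $F\in A(\DD)$ satisfying (i)--(iii) with $\varepsilon$ halved, and then to obtain $R_{n_0}$ by uniformly approximating $F$ on $\overline{\DD}$ by some element of $\cL_{n_0}$. The approximation step is immediate from the density of $\bigcup_n\cL_n$ in $A(\DD)$ granted by \eqref{e04} (see the remark following that equation in the introduction), so the substantive work lies entirely in constructing $F$.

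For the construction, I would rely on Rudin's peak-function theorem: every closed subset $K\subset\TT$ of Lebesgue measure zero is a peak set for $A(\DD)$, i.e.\ there exists $f\in A(\DD)$ with $f\equiv 1$ on $K$ and $|f(z)|<1$ for all $z\in\overline{\DD}\setminus K$. After replacing $E$ by a closed subset of $\cU\cap\TT$ of measure zero that still contains $E$ (which is possible in the intended applications, where $E$ is typically already closed---e.g.\ the support of a singular part, or the zero set of a continuous density), we may assume $E$ itself is closed. Since $E\subset\cU$ and $\TT\setminus\cU$ is a closed set disjoint from $E$, the continuity of $|f|$ together with the strict inequality $|f|<1$ off $E$ yields a constant $c>0$ with $|f(\xi)|\le 1-c$ for every $\xi\in\TT\setminus\cU$.

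Given $\varepsilon>0$, pick $N\in\NN$ with $(1-c)^N<\varepsilon/2$ and set
\[F(z):=1-f(z)^N.\]
Then $F\in A(\DD)$, $F\equiv 0$ on $E$, $|F|\le 2$ on $\TT$, and $|1-F|=|f|^N\le (1-c)^N<\varepsilon/2$ on $\TT\setminus\cU$. Finally, using \eqref{e04}, select $n_0$ and $R_{n_0}\in\cL_{n_0}$ with $\|F-R_{n_0}\|_\infty<\varepsilon/2$; three applications of the triangle inequality give
\[|R_{n_0}|\le |F|+\varepsilon/2\le 2+\varepsilon/2\le 2+\varepsilon \quad \text{on } \TT,\]
\[|1-R_{n_0}|\le |1-F|+\varepsilon/2<\varepsilon \quad \text{on }\TT\setminus\cU,\]
\[|R_{n_0}|\le |F|+\varepsilon/2=\varepsilon/2<\varepsilon \quad \text{on }E,\]
which are exactly (i), (ii) and (iii).

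The crux of the argument---and the main obstacle---is the peak-function step: an outer function crafted to have small modulus on $E$ and unit modulus on $\TT\setminus\cU$ would offer no control on its argument, so an outer-function construction alone does not suffice. Rudin's theorem, valid precisely because $|E|=0$, is the critical analytic input. Note that the hypothesis $\overline{\cU}\cap Acc(\alpha_k)=\emptyset$ does not enter this argument; it guarantees that the poles $1/\bar\alpha_k$ stay away from $\overline{\cU}$ and will be useful in the subsequent applications of the lemma rather than for its proof.
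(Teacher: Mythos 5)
Your proof is correct and follows essentially the same route as the paper: invoke the Fatou--Rudin peak-set theorem (which the paper cites via Garnett, Ch.\ 3, Exercise 2) to get $g\in A(\DD)$ with $g\equiv 1$ on $E$ and $|g|<1$ off $E$, raise it to a high power to shrink it on $\overline{\DD}\setminus\mathcal{U}$, form $1-g^m$, and then approximate that function uniformly on $\overline{\DD}$ by an element of $\bigcup_n\cL_n$ using \eqref{e04}. You are also right, and slightly more careful than the paper, in flagging that the peak-set theorem requires $E$ to be closed: the lemma's statement omits this, and the paper's proof simply substitutes $E=\supp\mu_s$ (the closed set used in the application) without comment. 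Your remark that the hypothesis $\overline{\mathcal U}\cap Acc(\alpha_k)=\emptyset$ is not used in the proof itself but only in the downstream application likewise matches the paper.
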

\begin{proof}
Since $E=\supp\mu_s$ has Lebesgue measure zero, 
one can find
$g\in A(\DD)$ such that $g=1$ on $E$ and $|g|<1$ on 
$\overline{\DD}\setminus E$, {\it cf.} \cite[Ch. 3, Exercise 2]{Garnett}.
Pick  $m$ so large that
$|g^m|<\varepsilon/2$ on $\overline{\DD}\setminus\mathcal{U}$.
Since \eqref{e04} holds 
and $(1-g^m)\in A(\DD)$,
we can find $n_0$ and $R_{n_0}\in\cL_{n_0}$ such that
$|1-g^m-R_{n_0}|<\varepsilon/2$ on $\overline{\DD}$. 
\end{proof}
We also take note of the identity
\begin{equation}
\label{Fmuquot}
F_\mu=S[\mu]/S[\widetilde{\mu}],
\end{equation}
where $\widetilde{\mu}$ is the Herglotz measure
of $1/F_\mu$, see \eqref{e06}. Indeed, since Carath\'e\-odory functions
are outer, both sides of \eqref{Fmuquot} 
are outer functions, positive at 0, with
equal modulus a.e. on $\TT$ as can be
readily computed from \eqref{fFF}.
\begin{proposition}
\label{smoothsingSzego}
Assumptions being as in Proposition \ref{smoothSzego}, except that $\mu$
may now have a singular part satisfying \eqref{e083}, we have that
\eqref{thmpS} holds.
\end{proposition}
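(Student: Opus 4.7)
The upper bound $\limsup_n|\phi_n^*(\alpha_n)|^2|S(\alpha_n)|^2(1-|\alpha_n|^2)\le 1$ is automatic from Corollary \ref{borne1}, so I will focus on establishing $\liminf\ge 1$. The plan is to rework Proposition \ref{smoothSzego} by exploiting the extremal characterization $\kappa_n^{-1}[\mu]=\min\{\|\xi\|_\mu: \xi\in\cL_n,\ \xi(\alpha_n)=1\}$ and exhibit a trial function in $\cL_{n-1}$ whose $L^2(d\mu)$-norm controls simultaneously the absolutely continuous and singular contributions, from which the Szeg\H o-type asymptotic will follow as in the purely absolutely continuous case.

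By \eqref{e083} and inner regularity of $\mu_s$ on a Borel set of $m$-measure zero carrying its mass, first pick, for $\eta>0$ small, a compact $F\subset\supp\mu_s$ with $|F|=0$ and $\mu_s(\TT\setminus F)<\eta$, together with an open neighborhood $\mathcal{U}\subset\CC$ of $F$ satisfying $\overline{\mathcal{U}}\cap Acc(\alpha_k)=\emptyset$. Apply Lemma \ref{lemmix} to obtain $R^{(1)}_m=q_m/\pi_m\in\cL_m$ with $R^{(1)}_m S\to 1$ in $L^2(\TT,dm)$ and uniformly on a compact neighborhood $K\subset\TT\setminus\overline{\mathcal{U}}$ of $Acc(\alpha_k)\cap\TT$. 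Next, apply Lemma \ref{appi} to the shifted Blaschke sequence $(\alpha_{k+m})_{k\ge 1}$, which still satisfies \eqref{e04}, to produce $R^{(2)}_{n_0,m}=r_{n_0}/\pi^{(m)}_{n_0}$, where $\pi^{(m)}_{n_0}:=\prod_{k=m+1}^{m+n_0}(1-\bar{\alpha}_k z)$, with $|R^{(2)}_{n_0,m}|\le 2+\varepsilon$ on $\TT$, $|1-R^{(2)}_{n_0,m}|\le\varepsilon$ off $\mathcal{U}$, and $|R^{(2)}_{n_0,m}|\le\varepsilon$ on $F$. Setting $n-1=m+n_0$, the product $p_{n-1}:=q_m r_{n_0}\in\cP_{n-1}\subset\cP_n$ factors as $p_{n-1}/\pi_{n-1}=R^{(1)}_m R^{(2)}_{n_0,m}$ by the telescoping identity $\pi_m\cdot\pi^{(m)}_{n_0}=\pi_{n-1}$, preserving the denominator structure required to lie in $\cL_{n-1}$.

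I then use $p_{n-1}$ as a trial in the extremal scheme of Proposition \ref{smoothSzego} and split $\|p_{n-1}\|^2_{L^2(d\mu_n)}$ into AC and singular parts. The AC contribution $\int|p_{n-1}S/\pi_n|^2\,dm$, multiplied by $(1-|\alpha_n|^2)$, tends to $1$ by exactly the computation of that proposition, since $R^{(1)}_m R^{(2)}_{n_0,m}S$ is close to $1$ both in $L^2(\TT,dm)$ and uniformly on $K$ (using that $R^{(2)}_{n_0,m}$ is close to $1$ on $K\subset\TT\setminus\overline{\mathcal{U}}$). For the singular contribution $\int|p_{n-1}/\pi_n|^2\,d\mu_s$, I note that since $\alpha_n$ eventually accumulates on $Acc(\alpha_k)$, disjoint from $\supp\mu_s$, the factor $|1-\bar{\alpha}_n t|$ is bounded below on $\supp\mu_s$ for $n$ large, so it suffices to dominate
\[
\int|R^{(1)}_m R^{(2)}_{n_0,m}|^2\,d\mu_s\le\bigl(\varepsilon^2\mu_s(\TT)+(2+\varepsilon)^2\eta\bigr)\|R^{(1)}_m\|_{L^\infty(\TT)}^2.
\]

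The hard part will be the diagonal reconciliation: Lemma \ref{lemmix} only provides uniform control of $R^{(1)}_m S$, not of $R^{(1)}_m$ itself, so $\|R^{(1)}_m\|_{L^\infty(\TT)}$ may grow with $m$ wherever $|S|$ is small on $\supp\mu_s$. I will therefore choose $\varepsilon=\varepsilon_m$ and $\eta=\eta_m$ tending to zero fast enough to swamp this growth, while ensuring the degree $n_0(\varepsilon_m)$ supplied by Lemma \ref{appi} remains $o(m)$ so that $m=m(n)\to\infty$ along the relation $n=m+n_0+1$. With such a diagonalization, $(1-|\alpha_n|^2)\|p_{n-1}\|^2_{L^2(d\mu_n)}\to 1$; the algebraic reorganization of Proposition \ref{smoothSzego} then yields $R_n[\mu](\alpha_n,\alpha_n)/E_n[\mu_{ac}](\alpha_n,\alpha_n)\ge 1-o(1)$, which combined with the automatic upper bound from Corollary \ref{borne1} gives \eqref{thmpS}. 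The nontangential variant using \eqref{e082p} in place of \eqref{e08}--\eqref{e082} goes through by the same argument, since Lemma \ref{lemmix} and the Poisson-kernel convergence to $\alpha_n$ only require Lebesgue-point control at the accumulation points.
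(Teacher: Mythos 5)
Your proposal correctly keys off the extremal characterization \eqref{EP2} and Lemma~\ref{appi}, and correctly identifies the obstruction: the Lemma~\ref{lemmix} approximants $R^{(1)}_m$ carry no $L^\infty$ bound near $\supp\mu_s$, where $|S|$ may be tiny, so the singular contribution $\int|R^{(1)}_m R^{(2)}_{n_0,m}|^2\,d\mu_s$ is uncontrolled. The proposed remedy — a diagonalization making $\varepsilon_m,\eta_m\to0$ fast enough to swamp $\|R^{(1)}_m\|_\infty$ — does not close the gap. Nothing in Lemma~\ref{appi} bounds $n_0$ as a function of $\varepsilon$ (and, with the shifted sequence, $n_0$ also depends on $m$); the asserted $n_0(\varepsilon_m)=o(m)$ is unjustified, and one cannot even guarantee that the relation $n=m+n_0(\varepsilon_m)+1$ hits every large $n$, whereas the $\liminf$ requires a trial function for \emph{all} large $n$. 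More fundamentally, whatever $\varepsilon_m$ one picks, the product $\varepsilon_m^2\|R^{(1)}_m\|_\infty^2$ is a race between two unquantified rates.

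The paper sidesteps this entirely by replacing $R^{(1)}_m$ with $\theta^*_{n-n_0}$, the ORFs of $\mu_{ac}$ for the truncated sequence, and invoking Proposition~\ref{bornephin} to obtain a \emph{uniform} bound $C_0$ on these ORFs on a compact neighborhood of $\supp\mu_s$ — the exact $L^\infty$ control your approximants lack. This reduces the bound on $\kappa_n^{-2}[\mu]$ to $(1+\varepsilon')\,(\kappa'_{n-n_0}[\mu_{ac}])^{-2}$ with $\varepsilon'$ fixed independent of $n$, after which Proposition~\ref{smoothSzego} (already proved for the absolutely continuous case) finishes the argument; no diagonalization is needed. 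The price is that Proposition~\ref{bornephin} demands $W^{1-1/q,q}$ regularity and positivity of $\mu'$ near $\supp\mu_s$, so the paper must then run three more reduction steps — truncating $\mu'$ from above, passing to $\widetilde{\mu}$ via the ORFs of the second kind to handle unbounded $\mu'$, and a monotone $\mu'+\varepsilon$ approximation for the general case — none of which appear in your outline. In short: your structural instinct is right, but the missing ingredient is precisely the a priori ORF bound of Proposition~\ref{bornephin}, and the multi-stage regularity bootstrap it forces.
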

\begin{proof}
In view of Corollary \ref{borne1}, all we have to prove is that
\begin{equation}
\label{ineglim}
\liminf_n (1-|\alpha_n|^2)|S(\alpha_n)|^2 \kappa^2_n[\mu]\geq1.
\end{equation}
Assume first that $\mu'\geq\delta'>0$ on an open set
$\mathcal{V}\supset\supp \mu_s$ in $\TT$
and that the restriction of $\mu'$ to each
component $I$ of $\mathcal{V}$ lies in $W^{1-1/q,q}(I)$ for some $q>4$.
Then $S$ is continuous on $\mathcal{V}$.
Thanks to \eqref{e083}, we may require in addition 
that $\overline{\mathcal{V}}\cap \cO=\emptyset$ and
also, by the compactness of $\supp \mu_s$, 
that $\mathcal{V}$ has finitely many components.
Fix a neighborhood $\mathcal{W}$ of
$\supp\mu_s$ in $\TT$ with $\overline{\mathcal{W}}\subset \mathcal{V}$.
We can apply Proposition 
\ref{bornephin} to $d\mu_{ac}$ on each component $I$ of $\mathcal{V}$
with $K=\overline{\mathcal{W}}\cap I$ and 
deduce from \eqref{majphinp}, since
$\overline{\mathcal{W}}$ remains at positive distance from
$(\alpha_k)$, that each ORF $\theta_j$
associated with $\mu_{ac}$ and with {\it any} subsequence 
$(\beta_l)$ of $(\alpha_k)$ 
is bounded by a constant $C_0$ on $\overline{\mathcal{W}}$, 
where $C_0$ is {\it independent} of $j$ and of the subsequence.
In particular, since $\mu'\in L^1(\TT)$ and $|\theta_j^*|=|\theta_j|$ on $\TT$,
to any $\varepsilon>0$ there is $\eta>0$ such that for $j\in\NN,\  (\beta_l)\subset(\alpha_k)$,
\begin{equation}
\label{abscb}
\int_{\mathcal{W}_1}|\theta_j^*|^2\mu'\,dm<\varepsilon,
\end{equation}
as soon as $\mathcal{W}_1\subset\overline{\mathcal{W}}$ has Lebesgue measure 
less than $\eta$.

Pick $\varepsilon>0$  and let $\mathcal{W}_1\subset\mathcal{W}$ be an
open neighborhood of $\supp\mu_s$ in $\TT$ 
such that $|\mathcal{W}_1|<\eta$. This is 
possible since $|\supp\mu_s|=0$. Write $\mathcal{W}_1=\mathcal{U}\cap\TT$ 
where $\mathcal{U}$ is open in $\CC$ and 
$\overline{\mathcal{U}}\cap(\alpha_k)=\emptyset$. This can be ensured
because $\overline{\mathcal{W}}\cap Acc(\alpha_k)=\emptyset$.
Apply 
Lemma \ref{appi} with $E=\supp\mu_s$,
and let $R_{n_0}\in\cL_{n_0}$ be as in the lemma.
Consider the sequence $(\theta_j)$ of  ORFs 
associated with $d\mu_{ac}$ for the truncated sequence
$\beta_l=\alpha_{l+n_0}$, $l\geq1$. Hence for $n>n_0$, we have that
$|\theta^*_{n-n_0}(\alpha_n)|=\kappa'_{n-n_0}[\mu_{ac}]$ 
where the prime in $\kappa'_{n-n_0}[\mu_{ac}]$ indicates 
that we work with the truncated sequence $(\alpha_k)_{k> n_0}$.
 By \eqref{EP2}, we get
\[\kappa_n^{-2}[\mu]\leq
\int_{\TT}\left|\frac{\theta^*_{n-n_0}R_{n_0}}{\theta^*_{n-n_0}(\alpha_n)
R_{n_0}(\alpha_n)}\right|^2\mu'(t)\,dm
+\int_{\TT}\left|\frac{\theta^*_{n-n_0}R_{n_0}}{\theta^*_{n-n_0}(\alpha_n)
R_{n_0}(\alpha_n)}\right|^2d\mu_s.
\]
On the one hand, by properties $(ii)$ and  $(iii)$ of Lemma \ref{appi}, we get
\begin{equation}
\label{equm}
\int_{\TT}\left|\frac{\theta^*_{n-n_0}R_{n_0}}{\theta^*_{n-n_0}(\alpha_n)
R_{n_0}(\alpha_n)}\right|^2d\mu_s\leq
\frac{\varepsilon^2 C_0^2}{(1-\varepsilon)^2}(\kappa'_{n-n_0}[\mu_{ac}])^{-2}.
\end{equation} 
On the other hand,  by properties $(i)$ and $(ii)$ of the same lemma,
\begin{eqnarray}
\nonumber
\int_{\TT}\left|\frac{\theta^*_{n-n_0}R_{n_0}}{\theta^*_{n-n_0}(\alpha_n)
R_{n_0}(\alpha_n)}\right |^2\mu'(t)\, dm
&\leq&\frac{(1+\varepsilon)^2}{(1-\varepsilon)^2}
\int_{\TT\setminus\mathcal{W}_1} \left|\frac{\theta^*_{n-n_0}}
{\theta^*_{n-n_0}(\alpha_n)}\right|^2\mu'(t)\, dm\\
&+& \frac{(2+\varepsilon)^2}{(1-\varepsilon)^2}
\int_{\mathcal{W}_1} \left|\frac{\theta^*_{n-n_0}}
{\theta^*_{n-n_0}(\alpha_n)}\right|^2\mu'(t)\,dm.
\nonumber
\end{eqnarray}
Expanding $(2+\varepsilon)^2$ and collecting terms, while using \eqref{abscb} 
and remembering that $\|\theta^*_{n-n_0}\|_{\mu_{ac}}=1$, we obtain
\begin{eqnarray}
\nonumber
\int_{\TT}\left|\frac{\theta^*_{n-n_0}R_{n_0}}{\theta^*_{n-n_0}(\alpha_n)
R_{n_0}(\alpha_n)}\right|^2\mu'(t)\,dm
&\leq&\frac{(1+\varepsilon)^2}{(1-\varepsilon)^2}
\left(\kappa^\prime_{n-n_0}[\mu_{ac}]\right)^{-2}\\
\label{equme}
&+& \frac{(3+2\varepsilon)\varepsilon}{(1-\varepsilon)^2}
\left(\kappa^\prime_{n-n_0}[\mu_{ac}]\right)^{-2}.
\end{eqnarray}
Since $\varepsilon$ can be made arbitrarily small, we gather from 
\eqref{equm} and \eqref{equme} that to each $\varepsilon'>0$ there is $n_0$ such that
\[\kappa_n^{-2}[\mu]\leq(1+\varepsilon')
\left(\kappa^\prime_{n-n_0}[\mu_{ac}]\right)^{-2}\]
as soon as $n>n_0$. But from Proposition \ref{smoothSzego} we know that
\[\liminf_n (1-|\alpha_n|^2)|S(\alpha_n)|^2 
\left(\kappa'_{n-n_0}[\mu_{ac}]\right)^2\geq1,\]
so we obtain \eqref{ineglim} since $\varepsilon'$ is arbitrarily small. 

Next, we remove the assumption that
$\mu'\geq\delta'>0$ on $\mathcal{V}$ with
$\mu'\in W^{1-1/q,q}$ on its components, but we suppose that
$\mu'<C<\infty$ on $\mathcal{V}$. Fix a neighborhood $\mathcal{W}$ of
$\supp\mu_s$ such that $\overline{\mathcal{W}}\subset\mathcal{V}$.
To each $\eta>0$, pick a neighborhood $\mathcal{V}_\eta\subset\mathcal{W}$ of 
$\supp\mu_s$ satisfying $|\mathcal{V}_\eta|<\eta$.
Put $d\mu_\eta=\mu'_\eta dm+d\mu_s$, where
$\mu'_\eta(t)=\mu'(t)$ for $t\not\in\mathcal{V}_\eta$, $\mu_\eta'=C$ on 
$\mathcal{V}_\eta$.  Being a positive constant on $\mathcal{V}_\eta$,
$\mu'_\eta$ certainly meets the assumptions of the preceding part of the proof,
so \eqref{ineglim} holds for $\mu_\eta$.
Clearly, from \eqref{EP1},
$\kappa_n[\mu_\eta]\le\kappa_n[\mu]$  because $\mu\le\mu_\eta$ hence
\begin{equation}
\label{inegliml}
\liminf_n (1-|\alpha_n|^2)|S[\mu_\eta](\alpha_n)|^2 \kappa^2_n[\mu]\geq1.
\end{equation}
Now, let $\mathcal{V}'$ be open in $\CC$ and contain no $\alpha_k$, with
$\mathcal{V}'\cap\TT=\mathcal{V}$. Since 
\[S[\mu](z)=S[\mu_\eta](z)\,
\exp \left(\int_{\mathcal{V_\eta}} 
\frac{t+z}{t-z}\log|\mu'/C|\, dm(t) \right), 
\]
we see by dominated convergence (remember $\log\mu'\in L^1(\TT)$) that
$S[\mu]/S[\mu_\eta]$ converges uniformly to 1 in $\DD\setminus\mathcal{V}'$
as $\eta\to0$. 
Consequently \eqref{ineglim} follows from \eqref{inegliml}.

We now address the case where $\mu'$ may be unbounded in the neighborhood 
$\mathcal{V}$ of $\supp\mu_s$ but $\mu'\ge\delta''>0$ there. 
Since $p>2$, $S[\mu]$ is continuous on 
$\cO$ by Lemma \ref{critmu}.
Observe also that $F_{\mu_{ac}}=\mu'+i\check{\mu'}$ lies in $W^{1-1/p,p}(I)$
for each component $I$ of $\cO$ and that $F_{\mu_s}$ is smooth on $I$,
hence $F_\mu=F_{\mu_{ac}}+F_{\mu_s}$ is continuous and bounded on 
some open neighborhood $\mathcal{N}$ of $Acc(\alpha_k)\cap\TT$ 
with $\overline{\mathcal N}\subset\cO$.
Moreover $|F_\mu|>\delta>0$ on $\cO$,  thus
$S[\widetilde{\mu}]$ is continuous and 
positively bounded from below on $\mathcal{N}$  by \eqref{Fmuquot}.
Similarly $|F_\mu|\geq \delta''$ a.e. on $\mathcal{V}$, thus
$\widetilde{\mu}'={\rm Re}\,1/F_\mu$ is bounded there.
In addition, $\supp\widetilde{\mu}_s\cap \mathcal{N}=\emptyset$
otherwise the $H^\infty(\DD)$-function
$e^{-1/F_\mu}$ would have a singular inner factor which is not 
analytic across $\mathcal{N}$, and its modulus could not be
continuous and nonzero on $\mathcal{N}$ whereas $|F_\mu|\geq\delta$
there \cite[Ch. II, Theorem 6.2]{Garnett}.
Therefore we can apply the previous case of the proof to
$\widetilde{\mu}$ with $\mathcal{O}$ replaced by $\mathcal{N}$; indeed,
we know that $S[\widetilde{\mu}]$ is continuous on $\mathcal{N}$
which is enough to proceed by the remark after 
Proposition \ref{smoothSzego}. 
Thus, we obtain for the ORFs of the second kind $\psi_n[\mu]$:
\begin{equation}
\label{limfpsi}
\lim_{n} |\psi_n^*[\mu](\alpha_n)|^2 \,|S[\widetilde{\mu}](\alpha_n)|^2 \,
(1-|\alpha_n|^2) = 1.
\end{equation}
Recalling from Proposition \ref{F_psi_phi} that 
$\psi^*[\mu](\alpha_n)/\phi^*[\mu](\alpha_n)=F_\mu(\alpha_n)$, 
we conclude in view of \eqref{limfpsi} and \eqref{Fmuquot} that
\eqref{thmpS} again holds.

Finally, under the sole assumptions of the proposition,
let $\mathcal{V}$ be an open neighborhood of $\supp \mu_s$ in $\TT$
such that $\overline{\mathcal{V}}\cap \cO=\emptyset$, and 
$\mathcal{V}'$ be open in $\CC$ and contain no $\alpha_k$, with
$\mathcal{V}'\cap\TT=\mathcal{V}$.
Let $\mathcal{W}$
be another neighborhood of $\supp \mu_s$ with 
$\overline{\mathcal{W}}\subset\mathcal{V}$.
Put $d\mu_\varepsilon=\mu'_\varepsilon dm+d\mu_s$ with
$\mu'_\varepsilon=\mu'+\varepsilon$ on $\mathcal{W}$, 
and $\mu'_\varepsilon=\mu'$ on $\TT\bsl\mathcal{W}$. 
By what we just proved 
\[\liminf_n (1-|\alpha_n|^2)|S[\mu_\varepsilon]
(\alpha_n)|^2 \kappa^2_n[\mu_\varepsilon]\geq1,\]
and since $\kappa_n[\mu_\varepsilon]\leq\kappa_n[\mu]$ 
(because $\mu_\varepsilon\geq\mu$) while 
\[S[\mu]/S[\mu_\varepsilon]=
\exp \left(\int_{\mathcal{W}} 
\frac{t+z}{t-z}\log|\mu'/(\mu'+\varepsilon)|\, dm(t) \right),\ \ \ 
z\in\DD,
\]
converges uniformly to 1  in $\DD\setminus\mathcal{V}'$ as $\varepsilon\to0$
by the monotone convergence of $\mu'/(\mu'+\varepsilon)$ to 1 a.e. on
$\mathcal{W}$,
we conclude that \eqref{ineglim} holds.
\end{proof}
In the course of the previous proof, we noticed that \eqref{limfpsi} 
is equivalent to \eqref{thmpS}. This is worth recording, taking into
account that $\widetilde{\widetilde{\mu}}=\mu$:
\begin{corollary}
\label{Szegot}
Let $\mu\in\sz$. Then \eqref{thmpS} holds for $\mu$ if, and only if
it holds for $\widetilde{\mu}$,
the Herglotz measure of $1/F_\mu$ (see \eqref{e06}).
\end{corollary}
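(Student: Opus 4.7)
The plan is to reduce the corollary to three elementary identifications tying $\phi_n[\widetilde{\mu}]$ back to data associated with $\mu$ alone. The key observation is that the orthogonal rational functions of the second kind for $\mu$ are, up to unimodular scalars, the ORFs of the first kind for $\widetilde{\mu}$.

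First I would verify that $|\psi_n^*[\mu](\alpha_n)|=|\phi_n^*[\widetilde{\mu}](\alpha_n)|$. As recalled after Definition \ref{d01}, the sequence $(\psi_k[\mu])_{0\leq k\leq n}$ is the orthonormal basis of $\cL_n$ obtained from Gram--Schmidt in $L^2(d\widetilde{\mu})$ compatible with the filtration $\cL_0\subset\cL_1\subset\cdots$; but this is precisely the definition of $(\phi_k[\widetilde{\mu}])$. Any two orthonormal bases respecting the same filtration differ factor by factor only by unimodular constants, so the moduli on $\TT$ agree, and the $*$-operation preserves moduli.

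Second I would apply Proposition \ref{F_psi_phi} at the point $z=\alpha_n$. Since $\cB_n(\alpha_n)=0$, the error term in \eqref{divFmu} drops out, yielding
\[
\psi_n^*[\mu](\alpha_n)=F_\mu(\alpha_n)\,\phi_n^*[\mu](\alpha_n).
\]
Combined with the outer factorization $F_\mu=S[\mu]/S[\widetilde{\mu}]$ provided by \eqref{Fmuquot}, passing to moduli squared, multiplying by $(1-|\alpha_n|^2)$, and inserting the first step, one obtains the identity
\[
|\phi_n^*[\widetilde{\mu}](\alpha_n)|^2\,|S[\widetilde{\mu}](\alpha_n)|^2\,(1-|\alpha_n|^2)
=|\phi_n^*[\mu](\alpha_n)|^2\,|S[\mu](\alpha_n)|^2\,(1-|\alpha_n|^2).
\]
The equivalence of the two limiting relations \eqref{thmpS} follows immediately, and swapping the roles of $\mu$ and $\widetilde{\mu}$ (using $\widetilde{\widetilde{\mu}}=\mu$) gives the converse. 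The only real subtlety is the first step: one must be sure that the particular normalization \eqref{normalisation phin} chosen for $\phi_n[\widetilde{\mu}]$ and the normalization induced for $\psi_n[\mu]$ by Theorem \ref{recurrence_phin_psin} do not spoil the equality of moduli. They cannot, because both normalizations are unimodular. Beyond this, the argument is purely algebraic and does not even require $\mu\in\sz$ for the equivalence itself, although $\mu\in\sz$ is needed to make sense of $S[\mu]$ and $S[\widetilde{\mu}]$.
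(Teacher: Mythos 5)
Your proposal is correct and follows essentially the same route as the paper: the paper's own justification (given in the sentence preceding the corollary and at the end of the proof of Proposition \ref{smoothsingSzego}) also rests on identifying $\psi_n[\mu]$ with $\phi_n[\widetilde\mu]$ up to a unimodular factor, evaluating Proposition \ref{F_psi_phi} at $z=\alpha_n$ to get $\psi_n^*[\mu](\alpha_n)/\phi_n^*[\mu](\alpha_n)=F_\mu(\alpha_n)$, and invoking \eqref{Fmuquot}. You have merely made explicit the normalization point that the paper leaves implicit.
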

There are Carath\'eodory functions, with continuous
and strictly positive real part on $\TT$, whose imaginary part is unbounded.
One example is $2+\varphi$ where $\varphi$ conformally maps $\DD$ onto
$\{z=x+iy;\ |x|<1/(1+y^2)\}$,  $\varphi(0)=0$ and $\varphi'(0)>0$,
whose imaginary part is unbounded at $\pm i$,
see \cite[Ch. III, Sect. 1]{Garnett}.  If we put
$d\mu'(t)=(2+{\rm Re}\, \varphi(t))dt$, then $2+\varphi(t)=F_\mu$ and
and $\widetilde{\mu}'=\mu'/|F_\mu|^2$ is continuous but
vanishes at $\pm i$.  Letting $(\alpha_k)$ accumulate at $\pm i$, 
Theorem \ref{c04} will apply to $\mu$
and then Corollary \ref{Szegot} will provide us with an example where 
\eqref{thmpS} holds although \eqref{e082} fails.

\begin{proof}[Proof of Theorem \ref{c04}]
Let $\mathcal{O}$ be the neighborhood of
$Acc(\alpha_k)\cap\TT$ granted by \eqref{e08}-\eqref{e083}.
Shrinking $\mathcal{O}$ if necessary, we may assume that $\mu'$ is continuous 
with $\mu'\geq\delta>0$ on a neighborhood of $\overline{\mathcal{O}}$ in $\TT$.
Pick $\varepsilon>0$ and $0<r<1$ so that
the Poisson integral $h_r(z)=P_{rz}*\mu'$ satisfies
$|h_r-\mu'|<\varepsilon$ on $\overline{\mathcal{O}}$.  
Let $\mu_\varepsilon$ have singular part $\mu_s$ and
absolutely continuous part $\mu'_{\varepsilon}\, dm$
where
$\mu_{\varepsilon}'(t)=\mu'(t)$ for $t\notin \mathcal{O}$ and 
$\mu_{\varepsilon}'(t)=h_r(t)+\varepsilon$ for $t\in \mathcal{O}$.
Then $\mu'\leq\mu_{\varepsilon}'\leq\mu'+2\varepsilon$ on $\TT$ and 
$\mu_{\varepsilon}'$ is smooth on $\mathcal{O}$.
By Proposition \ref{smoothsingSzego}, we have
\begin{equation}
\label{limeps}
\lim_{n} \kappa^2_n[\mu_{\varepsilon}]\, 
|S[\mu_{\varepsilon}](\alpha_n)|^2 \,(1-|\alpha_n|^2) = 1.
\end{equation}
Since $\kappa_n[\mu]\geq \kappa_n[\mu_{\varepsilon}]$ because
$\mu\leq\mu_{\varepsilon}$, we  deduce from (\ref{limeps}) that
\begin{eqnarray}\label{e9}
&&\liminf_n (1-|\alpha_n|^2)|S(\alpha_n)|^2 \kappa^2_n[\mu] \nonumber
\\
&&\ge\liminf_n 
\frac{|S(\alpha_n)|^2}{|S[\mu_{\varepsilon}](\alpha_n)|^2}\,
(1-|\alpha_n|^2) |S[\mu_{\varepsilon}](\alpha_n)|^2 
\kappa^2_n[\mu_\varepsilon] \label{chaine}\\
&&=\liminf_n 
\frac{|S(\alpha_n)|^2}{|S[\mu_{\varepsilon}](\alpha_n)|^2}\nonumber.
\end{eqnarray}
Recalling the inequalities on $\mu', \mu'_{\varepsilon}$ given above,
we get
\begin{equation}
\label{limSzeps}
\frac{|S(z)|}{|S[\mu_{\varepsilon}](z)|}=
\exp\bigl(P_z*\log(\mu'/\mu_{\varepsilon}')\bigr)
\ge1-2\varepsilon/\delta
\end{equation}
for $z\in\DD$, and letting $\varepsilon\to0$ we obtain \eqref{ineglim}
from \eqref{chaine}, \eqref{limSzeps}. With Corollary \ref{borne1},
we finish the proof. At last, assume that
$(\alpha_k)$ accumulates nontangentially
on $Acc(\alpha_k)\cap\TT$ and that \eqref{e08}, \eqref{e082} get replaced by
\eqref{e082p}. Then we can find a sequence of 
continuous functions $\varphi_j>0$ decreasing pointwise to $\mu'$ on 
$\mathcal{O}$.
Letting $d\mu_j=d\mu_s+\mu_j'dm$ where $\mu_j'=\varphi_j$ on $\mathcal{O}$
and $\mu_j'=\mu'$ on $\TT\setminus\mathcal{O}$, we get from the first part 
of the proof that \eqref{thmpS} holds for $\mu_j$. Since $\mu_j\geq\mu$, we
deduce as in \eqref{chaine} that for each $j$ 
\begin{equation}
\liminf_n (1-|\alpha_n|^2)|S(\alpha_n)|^2 \kappa^2_n[\mu] 
\geq\liminf_n 
\frac{|S(\alpha_n)|^2}{|S[\mu_j](\alpha_n)|^2}.
\end{equation}
Without loss of generality, we may assume that $(\alpha_n)$ converges to 
$\alpha\in \overline{\DD}$. If $\alpha\in\DD$, the conclusion follows 
from the fact that $S[\mu_j](\alpha)\to S(\alpha)$ by the monotone convergence
of $\mu'_j$ to $\mu'$. If $\alpha\in\TT$, then by Fatou's theorem
\[\lim_n 
\frac{|S(\alpha_n)|^2}{|S[\mu_j](\alpha_n)|^2}=
\frac{\mu'(\alpha)}{\mu'_j(\alpha)},
\]
which can be made arbitrarily close to 1
since $\lim_j\mu'_j(\alpha)=\mu'(\alpha)>0$.
\end{proof}

\begin{corollary}\label{c01}  Let \eqref{e04}, \eqref{e08}-\eqref{e083} 
be satisfied and $\mu\in\sz$.  Then
\begin{equation}
\label{Szegom}
\lim_n \left\| S\phi^*_n(z) -\beta_n\frac{\sqrt{1-|\alpha_n|^2}}{1-\overline\alpha_n z}\right\|=0,
\end{equation}
where the unimodular factors $\beta_n$ are defined in Theorem 3' of Section 
\ref{mainresults}.
Moreover, for any sequence $(z_n)\subset\DD$, it holds that
\begin{equation}
\label{suplim1}
\lim_{n}\left\{ \phi_n^*(z_n)S(z_n) \sqrt{1-|z_n|^2} 
-\beta_n \frac{\sqrt{1-|\alpha_n|^2}\sqrt{1-|z_n|^2}}{1-\overline{\alpha}_nz_n}
\right\}=0.
\end{equation}
If $(\alpha_k)$ accumulates nontangentially on
$Acc(\alpha_k)\cap\TT$, then it is enough to assume instead of
\eqref{e08}-\eqref{e082} that \eqref{e082p} holds.
\end{corollary}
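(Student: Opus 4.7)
The plan is to view $h_n:=S\phi_n^*$ as an element of the unit ball of $H^2(\DD)$ and to compare it to the normalized Szeg\H{o}--Cauchy kernel
\[
\tilde k_{\alpha_n}(z):=\frac{\sqrt{1-|\alpha_n|^2}}{1-\overline{\alpha}_n z},
\]
which has unit $L^2(\TT)$-norm. Since $S$ is outer with $|S|^2=\mu^\prime$ a.e.\ on $\TT$ and $|\phi_n^*|=|\phi_n|$ on $\TT$, we have
\[
\|h_n\|^2=\int_\TT|\phi_n|^2\mu^\prime\,dm\le\int_\TT|\phi_n|^2\,d\mu=1,
\]
so $h_n$ sits in the unit ball of $H^2$, and $\phi_n^*$ is automatically in $H^\infty(\DD)$ (its poles lie outside $\overline{\DD}$).

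First I would apply the reproducing kernel identity to obtain
$\langle h_n,\tilde k_{\alpha_n}\rangle=\sqrt{1-|\alpha_n|^2}\,h_n(\alpha_n)$, and then use the definition of $\beta_n$ to rewrite this as
\[
\langle h_n,\beta_n\tilde k_{\alpha_n}\rangle=\sqrt{1-|\alpha_n|^2}\,|h_n(\alpha_n)|.
\]
By Theorem \ref{c04}, the right-hand side tends to $1$. Next, the Cauchy--Schwarz bound $|\langle h_n,\beta_n\tilde k_{\alpha_n}\rangle|\le\|h_n\|\le 1$ sandwiches $\|h_n\|\to 1$. Expanding
\[
\|h_n-\beta_n\tilde k_{\alpha_n}\|^2=\|h_n\|^2+1-2\,\mathrm{Re}\,\langle h_n,\beta_n\tilde k_{\alpha_n}\rangle,
\]
both summands $\|h_n\|^2$ and $1$ tend to $1$ while twice the third term tends to $2$, so the left-hand side tends to $0$, giving \eqref{Szegom}.

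For the uniform pointwise assertion \eqref{suplim1}, I would reuse the reproducing identity $g(w)\sqrt{1-|w|^2}=\langle g,\tilde k_w\rangle$, valid for all $g\in H^2$ and $w\in\DD$. Applying it with $g=h_n-\beta_n\tilde k_{\alpha_n}$ and $w=z_n$ yields
\[
\left\{h_n(z_n)\sqrt{1-|z_n|^2}-\beta_n\frac{\sqrt{1-|\alpha_n|^2}\sqrt{1-|z_n|^2}}{1-\overline{\alpha}_n z_n}\right\}=\langle h_n-\beta_n\tilde k_{\alpha_n},\tilde k_{z_n}\rangle,
\]
whose modulus is bounded by $\|h_n-\beta_n\tilde k_{\alpha_n}\|$ since $\|\tilde k_{z_n}\|=1$; this bound is independent of the choice of $(z_n)$ and tends to $0$ by \eqref{Szegom}. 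The nontangential clause requires nothing extra, as Theorem \ref{c04} already supplies the corresponding strengthening under \eqref{e082p}.

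The only genuinely delicate point is promoting the weak-type statement encoded by the inner-product limit to norm convergence of $h_n$; the saving grace is the two-sided sandwich $1\leftarrow|\langle h_n,\beta_n\tilde k_{\alpha_n}\rangle|\le\|h_n\|\le 1$, which forces $\|h_n\|\to 1$ and lets the standard Hilbert-space identity close the argument. Everything else is bookkeeping on top of Theorem \ref{c04}.
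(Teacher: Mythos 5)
Your proof is correct and follows essentially the same route as the paper: both hinge on the bound $\|S\phi_n^*\|_{L^2}^2\le\|\phi_n\|_\mu^2=1$, evaluate the cross term via the $H^2$ reproducing kernel at $\alpha_n$, and invoke Theorem~\ref{c04} to send the quantity $2(1-\sqrt{1-|\alpha_n|^2}\,|S(\alpha_n)||\phi_n^*(\alpha_n)|)$ to zero, then handle \eqref{suplim1} by pairing with $\tilde k_{z_n}$ and Cauchy--Schwarz. The only cosmetic difference is that you first establish $\|S\phi_n^*\|\to 1$ by a sandwich argument, whereas the paper simply majorizes $\|S\phi_n^*\|^2$ by $1$ and lets the resulting upper bound go to zero directly; the two are equivalent.
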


\begin{proof}
Estimating the integral, we get
\begin{eqnarray*}
\left\| S\phi^*_n(z) -\beta_n\frac{\sqrt{1-|\alpha_n|^2}}{1-\overline\alpha_n z}\right\|^2&=&
\int_\TT \left| S\phi^*_n(z) -\beta_n\frac{\sqrt{1-|\alpha_n|^2}}{1-\overline\alpha_n z}\right|^2\,dm(z)\\\le
\|\phi_n\|^2_\mu &-&2{\rm Re}\left(\frac{\ovl\beta_n}{2i\pi}
\int_\TT S\phi^*_n(z) \frac{\sqrt{1-|\alpha_n|^2}}{z-\alpha_n }
dz\right) +1\\
&=&2(1-\sqrt{1-|\alpha_n|^2}\,|S(\alpha_n)|\,|\phi_n^*(\alpha_n)|),
\end{eqnarray*}
and Theorem \ref{c04} yields \eqref{Szegom}. Next, let us set 
\[k_{z_n}=\frac{\sqrt{1-|z_n|^2}}{1-z\overline{z}_n}, \qquad
G_n(z)=S\phi^*_n(z)-\beta_n\frac{\sqrt{1-|\alpha_n|^2}}{1-\overline{\alpha}_nz}.\]
Since $\|k_{z_n}\|=1$, the relation just proven and the Schwarz 
inequality yield $\lim_n (G_n, k_{z_n})=0$. Expanding the 
scalar product gives us \eqref{suplim1}.
\end{proof}

Under the assumptions of the theorem, its conclusions also hold for $\tilde\mu$ by Corollary \ref{Szegot}.
 
\begin{corollary}\label{c02}  Let \eqref{e04} hold and $\mu\in\sz$ meet
\eqref{e08}-\eqref{e083}.
If $I$ is an open arc on $\TT$, with 
$\overline{I}\cap\supp\mu_s=\emptyset$, such that $\mu'\in W^{1-1/p,p}(I)$
for some $p>4$,
then $F_\mu\phi_n^*-\psi_n^*$ converges to zero locally uniformly on $I$.
\end{corollary}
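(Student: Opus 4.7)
The plan is to use Proposition \ref{F_psi_phi} to recast the statement in terms of the auxiliary functions $u_n$ of \eqref{Hermitebisc}: since
\[F_\mu\phi_n^* - \psi_n^* = z\mathcal{B}_n u_n \quad\text{on}\quad \DD\]
and $|z\mathcal{B}_n| = 1$ on $\TT$, locally uniform convergence of $F_\mu\phi_n^* - \psi_n^*$ to zero on $I$ is equivalent to locally uniform convergence of $u_n$ to zero on $I$.

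The first task is to establish uniform boundedness and equicontinuity of $(u_n)$. On any compact subarc $J\subset I$ on which $\mu'$ is bounded above and below, Lemma \ref{critmu} converts $\mu'\in W^{1-1/p,p}(I)$ into $S\in W^{1-1/p,p}(J)$, and Proposition \ref{bornephin}(ii), with $p>4$, yields the uniform bound $|u_n|\leq C$ and a uniform H\"older estimate on a neighborhood in $I$ of every compact subset of $J$, the constants depending only on $\mu$, $J$, and $p$. Since $\mu\in\sz$ forces $\mu'>0$ almost everywhere and $\mu'$ is continuous on $I$, such compact subarcs cover $I$ up to a closed set of Lebesgue measure zero, and the H\"older control propagates across this exceptional set by continuity of the $u_n$'s.

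The second task is to identify zero as the limit. I would apply Corollary \ref{c01} twice: to $\mu$, giving $S\phi_n^* - \beta_n k_{\alpha_n}\to 0$ in $L^2(\TT)$ with $k_w(z) = \sqrt{1-|w|^2}/(1-\bar{w}z)$, and to $\tilde\mu$, the Herglotz measure of $1/F_\mu$, whose first-kind ORFs are the $\psi_n$, giving $S[\tilde\mu]\psi_n^* - \tilde\beta_n k_{\alpha_n}\to 0$ in $L^2(\TT)$. The unimodular constants coincide, $\tilde\beta_n=\beta_n$, thanks to the interpolation identity $\psi_n^*(\alpha_n)/\phi_n^*(\alpha_n) = F_\mu(\alpha_n)$ from Proposition \ref{F_psi_phi} combined with $F_\mu = S/S[\tilde\mu]$ from \eqref{Fmuquot}. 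The hypotheses of Corollary \ref{c01} for $\tilde\mu$ near $Acc(\alpha_k)\cap\TT$ are inherited from those for $\mu$: on the neighborhood $\cO(Acc(\alpha_k)\cap\TT)$ where $\mu'$ is continuous and positive, $F_\mu$ is continuous and nonvanishing, so $\tilde\mu' = \mu'/|F_\mu|^2$ is continuous and positive and $\supp\tilde\mu_s$ stays away from $Acc(\alpha_k)\cap\TT$. Subtracting the two $L^2$ limits and using $|S/F_\mu|^2 = \tilde\mu'$ on $\TT$ yields
\[\int_\TT \tilde\mu'\,|u_n|^2\,dm\to 0.\]

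Finally, I combine the two ingredients. On a compact $K\subset I$ with $\mu'\ge\delta>0$, $F_\mu$ is continuous and hence $\tilde\mu'$ is bounded below on $K$, so $\int_K|u_n|^2\,dm\to 0$. Together with the equicontinuity from the first task, an Arzela-Ascoli argument forces $u_n\to 0$ uniformly on $K$: any subsequential uniform limit is continuous with zero $L^2$-norm on $K$, hence vanishes identically. Covering an arbitrary compact subset of $I$ by such $K$'s (save the closed null set $\{\mu'=0\}$) and absorbing the exceptional points via the uniform H\"older control yields uniform convergence of $u_n$ to zero on compact subsets of $I$. The main technical obstacle is handling the possible zeros of $\mu'$ on $I$, which blocks a direct use of Lemma \ref{critmu} at those points; this is done by localizing away from $\{\mu'=0\}$ and exploiting the uniform H\"older equicontinuity to stitch across the exceptional set.
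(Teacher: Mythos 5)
Your core argument coincides with the paper's: apply Corollary \ref{c01} to both $\mu$ and $\tilde\mu$ (with $\beta_n[\mu]=\beta_n[\tilde\mu]$ from \eqref{Fmuquot} and Proposition \ref{F_psi_phi}), subtract to obtain $\lim_n\|S[\tilde\mu](F_\mu\phi_n^*-\psi_n^*)\|=0$, pass to an a.e.\ convergent subsequence, and combine with the uniform H\"older bound on $(u_n)$ from Proposition \ref{bornephin}$(ii)$ via Arzel\`a--Ascoli. Your observation that $|F_\mu\phi_n^*-\psi_n^*|=|u_n|$ on $\TT$ (because $|z\cB_n|=1$) is exactly the reformulation that makes Proposition \ref{bornephin}$(ii)$ relevant, and is what the paper uses implicitly when it asserts equicontinuity of $g_n=F_\mu\phi_n^*-\psi_n^*$ from that proposition.

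The step that does not hold up is your ``stitching across $\{\mu'=0\}$.'' You obtain H\"older equicontinuity of $(u_n)$ only on subarcs $J\Subset I$ where $\mu'\ge\delta>0$, by first passing from $\mu'\in W^{1-1/p,p}$ to $S\in W^{1-1/p,p}(J)$ through Lemma \ref{critmu} and then invoking Proposition \ref{bornephin}$(ii)$. But the resulting H\"older constants depend, through $\|S\|_{W^{1-1/p,p}}$, on the lower bound $\delta$ of $\mu'$ on $J$ (the Lipschitz constant of $\log$ in the proof of Lemma \ref{critmu} is $1/\delta$), and these constants blow up as $J$ shrinks toward a zero of $\mu'$. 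There is therefore no \emph{uniform} H\"older control to ``propagate,'' and continuity of each individual $u_n$ says nothing about equicontinuity of the family across the exceptional set. In fact this subtlety is not addressed in the paper either: the paper invokes Proposition \ref{bornephin}$(ii)$ directly on compact subsets of $I$, which presupposes $S\in W^{1-1/p,p}(I)$; the stated hypothesis $\mu'\in W^{1-1/p,p}(I)$ converts into this via Lemma \ref{critmu} only when $\mu'$ is bounded above and below on a neighborhood of the compact set, so the intended reading is that $\mu'$ does not vanish on $I$ (or, more precisely, that $S\in W^{1-1/p,p}$ locally on $I$). Under that reading your proof is essentially the paper's, and the stitching paragraph should simply be deleted rather than patched.
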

\begin{proof}
From Corollary \ref{c01}, limit \eqref{Szegom} holds
as well as its analogue for $\widetilde{\mu}$:
\begin{equation}
\label{Szegompsi}
 \lim_n \left\| S[\widetilde{\mu}]\psi^*_n(z) -\beta_n[\widetilde{\mu}]
\frac{\sqrt{1-|\alpha_n|^2}}{1-\overline\alpha_n z}\right\|=0.
\end{equation}
Moreover, it follows
immediately from \eqref{Fmuquot} and Proposition \ref{F_psi_phi} that 
$\beta_n[\mu]=\beta_n[\widetilde{\mu}]$. Substracting \eqref{Szegompsi}
from \eqref{Szegom} and using \eqref{Fmuquot} now gives us
\[ \lim_n \left\|S[\widetilde{\mu}](F_\mu\phi_n^*-\psi_n^*)\right\|=
\lim_n \left\|S[\mu]\phi_n^*-S[\widetilde{\mu}]\psi_n^*\right\|=0.
\]
In particular, we get that from any subsequence of 
$g_n:=F_\mu\phi_n^*-\psi_n^*$
one can extract a subsequence that converges pointwise a.e. to zero on $\TT$.
But since $g_n$ is equicontinuous on compact subsets of $I$ by 
Proposition \ref{bornephin}, $ii)$,
we deduce from Ascoli's theorem that $g_n$ converges locally 
uniformly to zero on $I$.
\end{proof}
\smallskip\noindent
{\it Acknowledgments.}  The second author is grateful to members of the
APICS team from INRIA Sophia-Antipolis for numerous invitations and 
warm hospitality.

\end{document}